\numberwithin{equation}{section}
\newtheorem{theo}{Theorem}[section]
\newtheorem{cor}[theo]{Corollary}
\newtheorem{lm}[theo]{Lemma}
\newtheorem{prop}[theo]{Proposition}
\DeclareMathOperator{\D}{\mathbb{D}}
\DeclareMathOperator{\N}{\mathbb{N}}
\DeclareMathOperator{\C}{\mathbb{C}}
\DeclareMathOperator{\R}{\mathbb{R}}
\theoremstyle{definition}
\newtheorem{rmk}[theo]{Remark}
\newtheorem{example}{Example}
\newtheorem*{construction}{Construction}
\title[Rectifiability and Lipschitz properties of the orbits]{Semigroups of holomorphic functions; rectifiability and Lipschitz properties of the orbits}
\author[Dimitrios Betsakos]{Dimitrios Betsakos}
\author[Konstantinos Zarvalis]{Konstantinos Zarvalis}
\thanks{K. Zarvalis is supported by Junta de Andaluc\'{i}a, grant number QUAL21 005 USE.}
\address{Department of Mathematics, Aristotle University of Thessaloniki, 54124, Thessaloniki, Greece}
\email{betsakos@math.auth.gr}
\address{Instituto de Matem\'{a}ticas de la Universidad de Sevilla, Avenida de Reina Mercedes, s/n, 41012, Seville, Spain}
\email{kzarvalis@us.es}
\subjclass[2020]{Primary 30D05, 37F44; Secondary 26A16}
\date{}
\keywords{Semigroup of holomorphic functions, Lipschitz curve, infinitesimal generator, backward orbit, hyperbolic metric}
\begin{document}

	\begin{abstract}
	Let $(\phi_t)$ be a continuous semigroup of holomorphic functions in the unit disk. We prove that all its orbits are rectifiable and that its forward orbits are Lipschitz curves. Moreover, we find a necessary and sufficient condition in terms of hyperbolic geometry so that a backward orbit is a Lipschitz curve. We further explore the Lipschitz condition for forward orbits lying on the unit circle and then for semigroups of holomorphic functions in general simply connected domains.
	\end{abstract}

	\maketitle
	
	\tableofcontents

\section{{\bf Introduction}}
Let $D\subsetneq\mathbb{C}$ be a simply connected domain. A family $(\phi_t)_{t\ge0}$ of holomorphic functions $\phi_t:D\to D$ is called a \textit{one-parameter continuous semigroup of holomorphic functions in} $D$ (or for short \textit{semigroup in} $D$) provided the following three conditions are satisfied:
\begin{enumerate}
	\item[(i)] $\phi_0=\textup{id}_{D}$;
	\item[(ii)] $\phi_{t+s}=\phi_t\circ\phi_s$, \text{for all }$t,s\ge0$;
	\item[(iii)] $\phi_t \xrightarrow{t\to0^+} \phi_0$, locally uniformly in $D$.
\end{enumerate}
A consequence of the definition is that every \textit{term} $\phi_t$ of the semigroup is a univalent function in $D$. If, in addition, for some $t_0>0$, $\phi_{t_0}$ is onto, then every term is necessarily an automorphism of $D$ and $(\phi_t)$ is a \textit{group}.

\medskip
One of the main features of semigroups is their close relationship with vector fields and dynamical systems (see e.g. \cite[Chapter 2]{Per}). More specifically, given a semigroup $(\phi_t)$ in $D$, there exists a unique holomorphic semi-complete vector field $G:D\to\mathbb{C}$ such that
\begin{equation}\label{generator}
	\dfrac{\partial\phi_t(\zeta)}{\partial t}=G(\phi_t(\zeta)), \quad\quad \zeta\in D, \quad t\in[0,+\infty).
\end{equation}
Therefore, a semigroup $(\phi_t)$ in $D$ can be thought of as the \textit{flow} of the vector field $G$. The converse process is also well-defined. The unique such mapping $G$ is called the \textit{infinitesimal generator} of the semigroup. As it turns out, the infinitesimal generator and its properties reveal various information about the semigroup. This will be the focal point of this present work. 

In recent literature, the usual reference domain when studying semigroups is the unit disk $\D$. We are going to follow this trend and mostly work with semigroups in $\D$. These semigroups  were introduced by Berkson and Porta \cite{berkson} in 1978. Since then, the field has enjoyed remarkable prosperity. A comprehensive overview of the main theory along with the  recent achievements may be found in the monograph \cite{book}.

Let $(\phi_t)$ be a semigroup in $\mathbb D$. 	For a point $z\in\mathbb{D}$, the curve 
	\begin{equation}\label{i2}
	\gamma_z:[0,+\infty)\to\mathbb{D},\;\;\;\; \gamma_z(t)=\phi_t(z),
\end{equation} 	
	 is called the \textit{forward orbit} of $z$. 
	 The point $z$ is its \textit{starting point}. It is proved in \cite{berkson} that, in fact, each $\gamma_z$ is a real analytic curve and by (\ref{generator}),
\begin{equation}\label{i3}
\gamma_z^\prime(t)=G(\gamma_z(t)),\;\;\;\;t\ge0, \;\;z\in\D.
\end{equation}

\medskip

Naturally, we are interested in the asymptotic behavior of the forward orbits, as $t\to+\infty$. By the continuous version of the Denjoy-Wolff Theorem (see e.g. \cite[Theorem 5.5.1]{Abate}), if the semigroup is not a group of hyperbolic rotations, then there exists a unique point $\tau\in\overline{\mathbb{D}}$ such that
	\begin{equation}\label{i4}
	\lim\limits_{t\to+\infty}\phi_t(z)=\lim_{t\to+\infty}\gamma_z(t)=\tau, \quad\text{for all }z\in\mathbb{D}.
	\end{equation} 
		The position of the Denjoy-Wolff point $\tau$ in the closure of the unit disk  leads to a first classification within the class of semigroups:
	\begin{enumerate}
		\item[(1)] if $\tau\in\mathbb{D}$, then $(\phi_t)$ is called \textit{elliptic},
		\item[(2)] whereas if $\tau\in\partial\mathbb{D}$, then $(\phi_t)$ is called \textit{non-elliptic}.
	\end{enumerate}

\medskip

If $z\in\D$, we can also think of a backward orbit of a semigroup starting from $z$ until it reaches a point on the unit circle. More precisely, let $z\in \D$. The function $\phi_t^{-1}$ is defined at $z$ for every $t$ in an interval of the form $[0,T)$.   Let $T_z$ be the supremum of all such numbers $T$.   
	The continuous curve $\tilde\gamma_z:[0,T_z)\to\mathbb{D}$ with
		\begin{equation}\label{i5}
\tilde\gamma_z(t)=\phi_t^{-1}(z)	
\end{equation} 	
is called {\it  the backward orbit} of the semigroup starting from $z$.
 Backward orbits were initially studied in \cite{CD} and \cite{ESZ}. Further results appear in \cite{Arosio}, \cite[Chapter 13]{book},  \cite{EKRS},  \cite{KZ}, \cite{illinois}.

\medskip

Various properties of the orbits have been studied; see e.g. \cite{Bet1},   \cite{BK}, \cite{BCDG2}, \cite{asympt},  \cite{Gum}, \cite{illinois}. It has been proved, in particular, that there exists a  non-elliptic semigroup in $\mathbb D$ such that each one of its forward orbits oscillates as it approaches the Denjoy-Wollf point; see \cite{Bet1}, \cite[Ch. 17]{book}, \cite{CDG}. A similar result holds for backward orbits; see \cite{Kel}. In the case of elliptic semigroups, the orbits, in general, approach the Denjoy-Wolff point $\tau$ spiralling around it. Because of these oscillating and spiralling examples, it is natural to wonder whether an orbit can actually have infinite length. The answer to this question is negative and can be inferred easily from the \textit{Hayman-Wu Theorem} for non-elliptic semigroups and from the \textit{Bishop-Jones Theorem} for elliptic semigroups. We will prove the following:

\begin{theo}\label{rectifiability}
	Let $(\phi_t)$ be a semigroup in $\D$. Every  orbit of $(\phi_t)$ (forward or backward) is rectifiable.
\end{theo}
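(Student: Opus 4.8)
The plan is to reduce the rectifiability of every orbit of $(\phi_t)$ to a statement about the image of a fixed analytic arc under a conformal (or univalent) map, and then invoke one of two classical theorems depending on whether the semigroup is elliptic or non-elliptic. First I would treat the forward orbits. Fix a starting point $z\in\D$; the forward orbit $\gamma_z([0,+\infty))$ is, by the semigroup property, contained in a single ``trajectory'' of the flow, and the different forward orbits starting from points of a suitable transversal foliate (a subdomain of) $\D$. The key observation is that there is a Koenigs-type linearization: there exists a univalent (Koenigs) map $h$ conjugating $(\phi_t)$ to the canonical model, in which the orbits become explicit — rays or segments (in the non-elliptic parabolic and hyperbolic cases they map to horizontal half-lines or to segments in a half-plane/strip, and in the elliptic case to logarithmic spirals or rays emanating from the image of $\tau$). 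So an orbit of $(\phi_t)$ is the image under a univalent function $h^{-1}$ of a very simple curve $L$ in the model domain.

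The heart of the matter is then a \emph{uniform} length bound for $h^{-1}(L)$ that does not see the individual orbit. For non-elliptic semigroups I would invoke the Hayman–Wu theorem: if $F$ is conformal from $\D$ onto a simply connected domain $\Omega$ and $\ell$ is a line (or a segment of a line), then $F^{-1}(\ell\cap\Omega)$ has length bounded by an absolute constant. Transporting this through the Koenigs model, the preimages of the straight orbit-curves in the model have uniformly bounded hyperbolic — hence, locally, Euclidean — length inside $\D$, and since an orbit stays in a fixed compact-exhaustion-type region away from $\partial\D$ except near $\tau$, one upgrades this to finite Euclidean length. For elliptic semigroups the straight lines are replaced by logarithmic spirals, and the relevant substitute is the Bishop–Jones theorem on the length of preimages of (quasi)lines, or more directly the fact that a logarithmic spiral is a quasiline / bi-Lipschitz image of a line, so Bishop–Jones applies and again yields a bounded-length preimage.

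For the backward orbits I would argue similarly: the backward orbit $\tilde\gamma_z$ is, up to the portion where $\phi_t^{-1}$ is defined, again a piece of a trajectory of the same flow, so it is a sub-arc of a curve of exactly the same type in the Koenigs model (a half-line or spiral continued in the other direction), and the same Hayman–Wu / Bishop–Jones estimate bounds its length; one only has to be a little careful that the backward orbit is defined on a maximal interval $[0,T_z)$ and to check that the finite-length conclusion is not spoiled as $t\uparrow T_z$, which is handled because the total curve (forward together with backward) has finite length.

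The main obstacle, and the step I would spend the most care on, is making the passage from ``the model curve is a line/spiral'' to ``Hayman–Wu (or Bishop–Jones) literally applies'' rigorous in each of the three dynamical types at once, i.e.\ identifying $\Omega=h(\D)$, checking that the orbit really does correspond to a full line-segment (or spiral arc) intersected with $\Omega$ rather than some truncation that the theorems do not cover, and converting the resulting bound on the length measured in the model back to Euclidean arclength in $\D$ — equivalently, controlling the hyperbolic length and using that arcs of hyperbolic-bounded length near a boundary point are Euclidean-rectifiable. Once the dictionary between the Koenigs model and the hypotheses of the two classical theorems is set up cleanly, the rectifiability of all orbits follows at once.
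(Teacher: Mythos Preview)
Your approach is essentially the paper's: linearize via the Koenigs map so that orbits become straight half-lines (non-elliptic) or logarithmic spirals (elliptic), then apply Hayman--Wu in the first case and Bishop--Jones in the second. The paper carries this out in a few lines, after verifying by a direct length computation that a logarithmic spiral is Ahlfors regular (rather than citing that it is a quasiline).

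One clarification: your last paragraph worries about ``converting the resulting bound on the length measured in the model back to Euclidean arclength in $\D$'' and about passing through hyperbolic length. This detour is unnecessary and reflects a slight misreading of the theorems. Hayman--Wu and Bishop--Jones are stated precisely in the form you need: if $f:\D\to\C$ is conformal and $L$ is a line (resp.\ an Ahlfors-regular connected set), then the \emph{Euclidean} length of $f^{-1}(L\cap f(\D))$ in $\D$ is bounded by an absolute (resp.\ $L$-dependent) constant. Taking $f=h$, the orbit in $\D$ is a subarc of $h^{-1}(L\cap\Omega)$, so its Euclidean length is bounded immediately; no hyperbolic-length step, no compact-exhaustion argument, and no separate analysis of the behaviour near $\tau$ or near $T_z$ is required.
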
 

Our next aim is to investigate whether the (forward or backward) orbits of a semigroup are Lipschitz.  Recall that a function $\gamma:I\to\C$, where $I\subset\R$ is some interval, is called \textit{Lipschitz} if there exists some constant $C>0$, depending solely on $\gamma$, such that
$$|\gamma(t)-\gamma(s)|<C|t-s|, \quad\text{for all }t,s\in I.$$

A first observation is that, by Theorem \ref{rectifiability}, every orbit of a semigroup in $\D$ can be reparametrized using the arc-length parameter and thus it has a Lipschitz parameterization. Nevertheless, in the study of semigroups, it clearly makes more sense to deal with orbits  with the parameterization induced directly by the semigroup, and examine whether they are Lipschitz. Partial results on this subject appear in \cite[Proposition 10.1.7]{book} and \cite[Proposition 3.1]{illinois}. Our aim is to completely find out under what conditions an orbit, forward or backward, is Lipschitz or not. Our main result concerning forward orbits is the following:

\begin{theo}\label{forward}
	Let $(\phi_t)$ be a semigroup in $\D$. Every forward orbit of $(\phi_t)$ is Lipschitz.
\end{theo}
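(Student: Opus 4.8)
The plan is to reduce the Lipschitz property of a forward orbit $\gamma_z$ to a uniform bound on $|G|$ along that orbit. Since $\gamma_z'(t)=G(\gamma_z(t))$ by \eqref{i3}, if we can show $\sup_{t\ge 0}|G(\gamma_z(t))|<\infty$, then for $s<t$ we get $|\gamma_z(t)-\gamma_z(s)|\le\int_s^t|G(\gamma_z(u))|\,du\le C|t-s|$, which is exactly the Lipschitz condition. So the whole matter comes down to controlling the infinitesimal generator along forward orbits.

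First I would recall the classical Berkson--Porta representation $G(\zeta)=(\tau-\zeta)(1-\overline{\tau}\zeta)p(\zeta)$ (with $\tau\in\D$ replaced by the appropriate form when $\tau\in\partial\D$, namely $G(\zeta)=(\overline{\tau}\zeta-1)(\zeta-\tau)p(\zeta)$), where $p:\D\to\C$ is holomorphic with $\mathrm{Re}\,p\ge 0$. In both cases $|G(\zeta)|\le 2(1-|\zeta|)|p(\zeta)|$ near the relevant boundary behavior, and by the Herglotz representation $|p(\zeta)|\le \mathrm{Re}\,p(0)\cdot\frac{1+|\zeta|}{1-|\zeta|}+|\,\mathrm{Im}\,p(0)|\cdot\frac{1+|\zeta|}{1-|\zeta|}$, so crudely $|G(\zeta)|=O\!\big((1-|\zeta|^2)\cdot\frac{1}{1-|\zeta|^2}\big)=O(1)$ globally — wait, one must be careful: this gives $|G(\zeta)|\le C$ only if $p$ is bounded, which it need not be. The honest statement is $|G(\zeta)|\lesssim (1-|\zeta|)\,|p(\zeta)|$ and the factor $(1-|\zeta|)$ is killed by at most one power of $(1-|\zeta|)^{-1}$ from the Herglotz bound, leaving $|G(\zeta)|\lesssim 1+|p(0)|$ uniformly on $\D$ in the elliptic case but only $|G(\zeta)|\lesssim (1-|\zeta|^2)\,\mathrm{Re}\,p(\zeta)+\dots$ in general. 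The cleanest route, which I would actually take, is: in the elliptic case $G$ extends continuously (indeed $G$ is bounded on $\D$ precisely when... ) — no. Let me instead use the semigroup directly.

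The robust approach is the following. Fix $z\in\D$ and $t_0>0$. For $t\ge t_0$ we have $\gamma_z(t)=\phi_{t_0}(\gamma_z(t-t_0))=\phi_{t_0}(\gamma_z(s))$ with $s=t-t_0\ge 0$, and the whole forward orbit $\{\gamma_z(s):s\ge 0\}$ lies in $\D$ and, by the Denjoy--Wolff theorem \eqref{i4}, accumulates only at $\tau\in\overline\D$; hence it is contained in a set $K_{t_0}$ on which we need bounds. Differentiating, $\gamma_z'(t)=\phi_{t_0}'(\gamma_z(s))\,G(\gamma_z(s))=\phi_{t_0}'(\gamma_z(s))\,\gamma_z'(s)$. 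Now use the key regularity fact (see \cite[Proposition 10.1.7]{book} and the theory around it): for every $t_0>0$, $\phi_{t_0}$ extends to a function in the disk algebra... this is \emph{not} true in general either. The correct tool is that $\phi_{t_0}'$ is bounded on any subset on which $G$ does not vanish too slowly — equivalently, the \emph{hyperbolic} estimate: since $\phi_t$ is a self-map of $\D$, it is $1$-Lipschitz for the hyperbolic metric $\lambda_\D$, so the hyperbolic length of $\gamma_z|_{[0,t]}$ is nonincreasing under forward flow and in fact $\lambda_\D(\gamma_z(t))|\gamma_z'(t)| = \lambda_\D(\gamma_z(t))|G(\gamma_z(t))|$ is nonincreasing in $t$ (this is the standard monotonicity of the generator in the hyperbolic metric). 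Therefore $|G(\gamma_z(t))|\le \dfrac{\lambda_\D(z)}{\lambda_\D(\gamma_z(t))}\,|G(z)| = \dfrac{1-|\gamma_z(t)|^2}{1-|z|^2}\,|G(z)|\le \dfrac{|G(z)|}{1-|z|^2}=:C_z$ for all $t\ge 0$. Since $1-|\gamma_z(t)|^2\le 1$, this is exactly the uniform bound we needed, and it holds without any case distinction between elliptic and non-elliptic semigroups.

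Thus the proof I would write has three short steps: (1) recall the hyperbolic-metric monotonicity $t\mapsto (1-|\gamma_z(t)|^2)^{-1}|\gamma_z'(t)|$ is nonincreasing, which follows from $\phi_t$ being a hyperbolic contraction together with \eqref{i3} and the chain rule applied to $\gamma_z(t+s)=\phi_s(\gamma_z(t))$; (2) deduce $|\gamma_z'(t)|=|G(\gamma_z(t))|\le (1-|z|^2)^{-1}|G(z)|$ for all $t\ge 0$; (3) integrate: for $0\le s<t$, $|\gamma_z(t)-\gamma_z(s)| = \left|\int_s^t \gamma_z'(u)\,du\right|\le (t-s)\,\dfrac{|G(z)|}{1-|z|^2}$, giving the Lipschitz constant $C=|G(z)|/(1-|z|^2)$ (depending on the orbit through its starting point $z$, as permitted). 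The only delicate point — the ``main obstacle'' — is justifying step (1) rigorously: one must verify that $\lambda_\D(\phi_s(w))|\phi_s'(w)|\le \lambda_\D(w)$ (Schwarz--Pick) combined with $\phi_s'(\gamma_z(t))\,G(\gamma_z(t)) = \partial_t\phi_s(\gamma_z(t)) = \partial_t \gamma_z(t+s) = G(\gamma_z(t+s))$ indeed yields $\lambda_\D(\gamma_z(t+s))|G(\gamma_z(t+s))| = \lambda_\D(\phi_s(\gamma_z(t)))|\phi_s'(\gamma_z(t))|\,|G(\gamma_z(t))| \le \lambda_\D(\gamma_z(t))|G(\gamma_z(t))|$, i.e.\ the claimed monotonicity; this is standard but should be stated carefully. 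Everything else is a one-line integration.
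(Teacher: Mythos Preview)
Your final argument is correct and is genuinely different from the paper's proof. After the detour through Berkson--Porta (which you rightly drop), the core idea you land on is clean: from $\gamma_z(t+s)=\phi_s(\gamma_z(t))$ and \eqref{i3} you get $G(\gamma_z(t+s))=\phi_s'(\gamma_z(t))\,G(\gamma_z(t))$, and combining with the Schwarz--Pick contraction $\lambda_{\D}(\phi_s(w))|\phi_s'(w)|\le\lambda_{\D}(w)$ yields the monotonicity of $t\mapsto\lambda_{\D}(\gamma_z(t))|G(\gamma_z(t))|$. This immediately gives $|G(\gamma_z(t))|\le\dfrac{1-|\gamma_z(t)|^2}{1-|z|^2}\,|G(z)|\le\dfrac{|G(z)|}{1-|z|^2}$, and integration finishes the proof. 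Every step is justified; the only cosmetic point is that the exploratory paragraphs before ``The robust approach\dots'' should be deleted in a final write-up.

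By contrast, the paper proceeds via the Koenigs function: it writes $|G(\gamma_z(t))|$ in terms of $|h'(\gamma_z(t))|^{-1}$ (or $|\mu h/h'|$ in the elliptic case), applies the Koebe-type estimate of Lemma~\ref{pommerenkelemma} to bound this by $(1-|\gamma_z(t)|^2)/\delta_\Omega(h(\gamma_z(t)))$, and then uses the monotonicity of $\delta_\Omega$ along forward orbits in the non-elliptic case (or the boundedness of the forward spiral in the elliptic case). This forces a case split and brings in the Koenigs machinery, but it pays off elsewhere in the paper: the same estimates feed directly into Remark~\ref{remark}, Proposition~\ref{propshift}, and the later analysis of backward and boundary orbits, where the explicit dependence on $\delta_\Omega$ is exactly what is needed. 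Your Schwarz--Pick argument is shorter, avoids the elliptic/non-elliptic dichotomy entirely, and in fact recovers the same limit $G(\gamma_z(t))\to 0$ in the non-elliptic case (since $1-|\gamma_z(t)|^2\to 0$); its drawback is only that it is less immediately reusable for the Koenigs-domain computations that dominate the rest of the paper.
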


Next, in order to work with backward orbits, we are going to need an important tool of semigroup theory, the \textit{Koenigs function} (see e.g. \cite[Section 5.7]{Abate} or \cite[Chapter 9]{book}). The definition of this function depends on the type (elliptic or non-elliptic) of the semigroup. For an elliptic semigroup $(\phi_t)$ with Denjoy-Wolff point $\tau\in\D$, its Koenigs function is the (unique up to a multiplicative complex constant) conformal mapping $h:\D\to\C$ such that $h(\tau)=0$ and
\begin{equation}\label{Koenigs elliptic}
	h(\phi_t(z))=e^{-\mu t}h(z), \quad\textup{for all }t\ge0\textup{ and }z\in\D,
\end{equation}
where $\mu\in\C$, with $\textup{Re}\mu>0$. The number $\mu$ is called the \textit{spectral value} of $(\phi_t)$ and is the unique complex number satisfying $\phi_t'(\tau)=e^{-\mu t}$, for all $t\ge0$ (see \cite[Definition 5.2.10]{Abate}). When $(\phi_t)$ is non-elliptic, its Koenigs function is the (unique up to an additive complex constant) conformal mapping such that
\begin{equation}\label{Koenigs non-elliptic}
	h(\phi_t(z))=h(z)+t, \quad\textup{for all }t\ge0\textup{ and }z\in\D.
\end{equation} 
In both cases, the simply connected domain $\Omega:=h(\D)$ is called the \textit{Koenigs domain} of the semigroup.


Our main result concerning backward orbits will be stated in hyperbolic terms. Given a simply connected domain $\Omega\subsetneq\C$, we denote by $\lambda_{\Omega}$ the \textit{hyperbolic density} in $\Omega$ and by $k_\Omega$ the corresponding \textit{hyperbolic distance} (more details on hyperbolic quantities follow in Section 2).

\begin{theo}\label{backward}
	Let $(\phi_t)$ be a semigroup in $\D$ with Koenigs function $h$ and Koenigs domain $\Omega$. Let $z\in\D$. 
	\begin{enumerate}
		\item[\textup{(a)}] Suppose $(\phi_t)$ is non-elliptic. The backward orbit $\tilde{\gamma}_z$ is Lipschitz if and only if
		\begin{equation}\label{T3e1}
			\limsup\limits_{t\to T_z}\dfrac{\lambda_{\Omega}(h(z)-t)}{e^{2k_\Omega(h(z),h(z)-t)}}<+\infty.
		\end{equation}
		\item[\textup{(b)}] Suppose $(\phi_t)$ is elliptic with spectral value $\mu\in\C$, $\textup{Re}\mu>0$. The backward orbit $\tilde{\gamma}_z$ is Lipschitz if and only if
		\begin{equation}\label{T3e2}
			\limsup\limits_{t\to T_z}\dfrac{e^{\textup{Re}\mu t}\lambda_{\Omega}(e^{\mu t}h(z))}{e^{2k_\Omega(h(z),e^{\mu t}h(z))}}<+\infty.
		\end{equation}
		
	\end{enumerate}
\end{theo}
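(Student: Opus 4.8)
The plan is to reduce the Lipschitz property of the backward orbit $\tilde\gamma_z$ to the boundedness of $|\tilde\gamma_z'|$ on $[0,T_z)$, and then to evaluate $|\tilde\gamma_z'(t)|$ explicitly by means of the Koenigs function. First I would make $\tilde\gamma_z$ explicit: solving the Koenigs equation $(\ref{Koenigs non-elliptic})$ (resp.\ $(\ref{Koenigs elliptic})$) for $w=\phi_t^{-1}(z)$ yields $h(w)=h(z)-t$ in the non-elliptic case and $h(w)=e^{\mu t}h(z)$ in the elliptic case, whence
\begin{equation*}
\tilde\gamma_z(t)=h^{-1}\bigl(h(z)-t\bigr)\qquad\text{resp.}\qquad \tilde\gamma_z(t)=h^{-1}\bigl(e^{\mu t}h(z)\bigr),
\end{equation*}
and in each case $T_z$ is exactly the first exit time from $\Omega$ of the real-analytic curve $t\mapsto h(z)-t$ (resp.\ $t\mapsto e^{\mu t}h(z)$). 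That the corresponding exit-time set is an interval $[0,T_z)$ follows from $\Omega+s\subseteq\Omega$ (resp.\ $e^{-\mu s}\Omega\subseteq\Omega$) for all $s\ge0$, which are immediate consequences of the Koenigs equations. In particular $\tilde\gamma_z$ is the composition of the holomorphic map $h^{-1}$ with a real-analytic curve staying in $\Omega$ on $[0,T_z)$, hence $\tilde\gamma_z$ is real-analytic there. Since a $C^1$ curve on an interval is Lipschitz precisely when its derivative is bounded, and since $t\mapsto|\tilde\gamma_z'(t)|$ is continuous and thus bounded on every $[0,T]$ with $T<T_z$, we conclude that $\tilde\gamma_z$ is Lipschitz if and only if $\limsup_{t\to T_z}|\tilde\gamma_z'(t)|<+\infty$. (In part (b) one may also assume $z\neq\tau$, since for $z=\tau$ we have $h(z)=0$ and $\tilde\gamma_z\equiv\tau$ is trivially Lipschitz.)

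Next I would differentiate and rewrite $|\tilde\gamma_z'(t)|$ in hyperbolic terms. Using $(h^{-1})'=1/(h'\circ h^{-1})$ we get $|\tilde\gamma_z'(t)|=1/|h'(\tilde\gamma_z(t))|$ in case (a) and $|\tilde\gamma_z'(t)|=|\mu|\,|h(z)|\,e^{\textup{Re}\mu t}/|h'(\tilde\gamma_z(t))|$ in case (b). Conformal invariance of the hyperbolic density gives $|h'(w)|=\lambda_{\D}(w)/\lambda_{\Omega}(h(w))$, and conformal invariance of the hyperbolic distance gives $k_\Omega\bigl(h(z),h(\tilde\gamma_z(t))\bigr)=k_\D\bigl(z,\tilde\gamma_z(t)\bigr)$, where $h(\tilde\gamma_z(t))$ equals $h(z)-t$ in case (a) and $e^{\mu t}h(z)$ in case (b). Therefore
\begin{equation*}
|\tilde\gamma_z'(t)|=\frac{\lambda_\Omega\bigl(h(z)-t\bigr)}{\lambda_\D\bigl(\tilde\gamma_z(t)\bigr)}\qquad\text{resp.}\qquad |\tilde\gamma_z'(t)|=|\mu|\,|h(z)|\,\frac{e^{\textup{Re}\mu t}\,\lambda_\Omega\bigl(e^{\mu t}h(z)\bigr)}{\lambda_\D\bigl(\tilde\gamma_z(t)\bigr)}.
\end{equation*}

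Finally I would invoke the elementary comparison, valid for a fixed $z\in\D$ and all $w\in\D$,
\begin{equation*}
e^{2k_\D(z,w)}\asymp\lambda_\D(w),
\end{equation*}
with multiplicative constants depending only on $z$; this is immediate from $e^{2k_\D(z,w)}=\frac{1+\rho}{1-\rho}$ together with $1-\rho^2=\frac{(1-|z|^2)(1-|w|^2)}{|1-\bar z w|^2}$, where $\rho=\bigl|\frac{z-w}{1-\bar z w}\bigr|$, since $(1+\rho)^2$ lies between $1$ and $4$ and $|1-\bar z w|$ between $1-|z|$ and $1+|z|$. Applying this with $w=\tilde\gamma_z(t)$ and recalling that $k_\D(z,\tilde\gamma_z(t))$ equals $k_\Omega(h(z),h(z)-t)$ in case (a) and $k_\Omega(h(z),e^{\mu t}h(z))$ in case (b), we see that $|\tilde\gamma_z'(t)|$ is comparable, with multiplicative constants depending only on $z$, to the quantity inside the $\limsup$ in $(\ref{T3e1})$ in case (a) and in $(\ref{T3e2})$ in case (b). Together with the reduction of the first paragraph this yields the two equivalences. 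The whole argument is short, and its real content is the observation that the quantity $\lambda_\D(\tilde\gamma_z(t))$ arising naturally in $|\tilde\gamma_z'(t)|$ is, by the displayed comparison, interchangeable with $e^{2k_\Omega(h(z),\cdot)}$; once that is recognized, the only thing deserving care is the reduction to bounded derivative on the half-open parameter interval and the fact that $\tilde\gamma_z$ is genuinely $C^1$, both of which are immediate once the Koenigs representation is in hand, the remainder being bookkeeping with conformal invariance.
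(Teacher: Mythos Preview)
Your proof is correct and follows essentially the same approach as the paper: reduce the Lipschitz property to boundedness of $|\tilde\gamma_z'(t)|$, express this via the Koenigs function as $\lambda_\Omega(h(\tilde\gamma_z(t)))/\lambda_\D(\tilde\gamma_z(t))$ (times $|\mu h(z)|e^{\textup{Re}\mu t}$ in the elliptic case), and then compare $\lambda_\D(\tilde\gamma_z(t))$ with $e^{2k_\Omega(h(z),h(\tilde\gamma_z(t)))}$ up to constants depending only on $z$. The only organizational difference is that the paper isolates the first reduction as a separate lemma (Lemma~\ref{generator-lemma}), proved via the integral representation $\tilde\gamma_z(t_2)-\tilde\gamma_z(t_1)=\int_{t_1}^{t_2}G(\tilde\gamma_z(t))\,dt$ without invoking differentiability of $\tilde\gamma_z$ directly, whereas you obtain it more quickly by noting from the Koenigs representation that $\tilde\gamma_z$ is real-analytic on $[0,T_z)$ and hence Lipschitz iff $|\tilde\gamma_z'|$ is bounded; the paper's comparison $1-|\tilde\gamma_z(t)|^2\asymp e^{-2k_\Omega(h(z),h(\tilde\gamma_z(t)))}$ goes through $k_\D(0,\tilde\gamma_z(t))$ and the triangle inequality rather than the pseudo-hyperbolic identity you use, but the estimates are equivalent.
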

Contrary to forward orbits, we see that backward orbits are not necessarily Lipschitz. Indeed, in Section 4 we will construct a semigroup that has a non-Lipschitz backward orbit.

The condition described in Theorem \ref{backward} provides an intuitive way of understanding whether a backward orbit is Lipschitz based on the geometry of the Koenigs domain. In particular, it may lead to simple sufficient conditions.

\begin{cor}\label{regular}
	Let $(\phi_t)$ be a semigroup in $\D$. Every regular backward orbit of $(\phi_t)$ is Lipschitz.
\end{cor}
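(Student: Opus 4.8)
The plan is to derive Corollary~\ref{regular} from Theorem~\ref{backward} by showing that, for a \emph{regular} backward orbit, the relevant $\limsup$ in \eqref{T3e1} or \eqref{T3e2} is automatically finite. Recall that a backward orbit $\tilde\gamma_z$ is called regular precisely when it has positive hyperbolic step, equivalently (by the standard theory, see \cite[Chapter 13]{book}) when $T_z=+\infty$ and the backward orbit converges to a boundary fixed point (a boundary regular fixed point in the non-elliptic case, or, after passing through the Koenigs model, when the image curve $h(\tilde\gamma_z(t))$ tends to infinity along a direction in which $\Omega$ contains a half-plane or a suitably large sector/strip). So the first step is to recall this characterization and translate ``regular'' into a concrete statement about the behavior of $h(z)-t$ (non-elliptic case) or $e^{\mu t}h(z)$ (elliptic case) relative to the geometry of $\Omega$.

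First I would handle the non-elliptic case. Here $h(\tilde\gamma_z(t)) = h(z) - t$, so the backward orbit in the Koenigs picture is simply the horizontal ray $w = h(z)-t$, $t\in[0,T_z)$. Regularity means this ray stays inside $\Omega$ for all $t\ge 0$ (so $T_z=+\infty$) and, more importantly, that $\Omega$ contains a horizontal half-strip $\{\operatorname{Re}w < -R\} \cap \{|\operatorname{Im}w - \operatorname{Im}h(z)| < \delta\}$ eventually — this is the ``positive hyperbolic step'' condition expressed geometrically. On such a half-strip the hyperbolic density satisfies $\lambda_\Omega(w) \lesssim 1$ (bounded, by monotonicity of the density and comparison with the strip), while the hyperbolic distance grows at least linearly: $k_\Omega(h(z), h(z)-t) \ge \tfrac{1}{2}k_{S}(h(z),h(z)-t) \gtrsim t$ for the strip $S$, hence $e^{2k_\Omega(h(z),h(z)-t)} \gtrsim e^{ct}$ for some $c>0$. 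Therefore the quotient in \eqref{T3e1} is $\lesssim e^{-ct} \to 0$, so the $\limsup$ is finite (indeed zero). The elliptic case is analogous after the change of variable: regularity forces $|e^{\mu t}h(z)| \to \infty$ staying within a sector where $\Omega$ contains a cone, so $\lambda_\Omega(e^{\mu t}h(z)) \lesssim |e^{\mu t}h(z)|^{-1} = e^{-\operatorname{Re}\mu\, t}|h(z)|^{-1}$, which exactly cancels the factor $e^{\operatorname{Re}\mu\, t}$ in the numerator; meanwhile $k_\Omega(h(z), e^{\mu t}h(z))$ again grows linearly in $t$ by comparison with the cone, so the exponential in the denominator sends the whole quotient to $0$.

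The main obstacle, and the step I would be most careful about, is pinning down the precise geometric consequence of ``regular'' that makes these density and distance estimates work — in particular justifying that a regular backward orbit's image under $h$ lies eventually in a half-strip (non-elliptic) or cone (elliptic) inside $\Omega$. This should follow from the known description of regular backward orbits via the existence of a non-zero backward hyperbolic step and the associated ``backward'' Koenigs-type model, but I would want to cite the exact statement (e.g.\ from \cite[Chapter 13]{book} or \cite{ESZ}, \cite{CD}) rather than reprove it. A secondary, purely technical point is the comparison of $k_\Omega$ with $k_S$ (or $k_{\text{cone}}$): this is just the standard monotonicity $k_\Omega \ge k_{\Omega'}$ for $\Omega\subset\Omega'$ applied in the right direction together with the explicit formula for the hyperbolic distance in a strip/sector, so no real difficulty there.

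\begin{proof}
By Theorem~\ref{backward}, it suffices to verify \eqref{T3e1} in the non-elliptic case and \eqref{T3e2} in the elliptic case, assuming $\tilde\gamma_z$ is regular.

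\emph{Non-elliptic case.} Since the orbit is regular, $T_z=+\infty$ and the backward orbit has positive hyperbolic step; in the Koenigs model $h(\tilde\gamma_z(t))=h(z)-t$, and regularity implies that $\Omega$ eventually contains a horizontal half-strip
$$S=\{w\in\C:\operatorname{Re}w<-R,\ |\operatorname{Im}w-\operatorname{Im}h(z)|<\delta\}$$
for suitable $R,\delta>0$, with $h(z)-t\in S$ for all large $t$. By monotonicity of the hyperbolic density under inclusion, $\lambda_\Omega(h(z)-t)\le\lambda_S(h(z)-t)\le C_1$ for all large $t$, since the density of a strip is bounded. On the other hand, again by monotonicity (now of the distance, for $\Omega\supset S'$ with $S'$ a fixed strip containing the tail of the orbit) and the explicit formula for the hyperbolic distance in a strip, there is $c>0$ with $k_\Omega(h(z),h(z)-t)\ge c\,t - C_2$ for all large $t$. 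Hence
$$\frac{\lambda_\Omega(h(z)-t)}{e^{2k_\Omega(h(z),h(z)-t)}}\le C_1 e^{2C_2} e^{-2ct}\xrightarrow[t\to+\infty]{}0,$$
so the $\limsup$ in \eqref{T3e1} is finite and $\tilde\gamma_z$ is Lipschitz.

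\emph{Elliptic case.} Here $h(\tilde\gamma_z(t))=e^{\mu t}h(z)$ with $\operatorname{Re}\mu>0$, so $|h(\tilde\gamma_z(t))|=e^{\operatorname{Re}\mu\,t}|h(z)|\to+\infty$. Regularity implies $\Omega$ eventually contains a sector $V$ with vertex at $0$ along the direction of the orbit, with $e^{\mu t}h(z)\in V$ for all large $t$. By monotonicity of the density, $\lambda_\Omega(e^{\mu t}h(z))\le\lambda_V(e^{\mu t}h(z))\le \dfrac{C_3}{|e^{\mu t}h(z)|}=C_3 e^{-\operatorname{Re}\mu\,t}|h(z)|^{-1}$ for large $t$, since the density of a sector with vertex $0$ is comparable to $|w|^{-1}$. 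Therefore the numerator of \eqref{T3e2} satisfies $e^{\operatorname{Re}\mu\,t}\lambda_\Omega(e^{\mu t}h(z))\le C_3|h(z)|^{-1}$. As for the denominator, by comparison with the sector $V$ and the explicit formula for the hyperbolic distance in a sector, $k_\Omega(h(z),e^{\mu t}h(z))\ge c\,t-C_4$ for large $t$ with some $c>0$. Consequently
$$\frac{e^{\operatorname{Re}\mu\,t}\lambda_\Omega(e^{\mu t}h(z))}{e^{2k_\Omega(h(z),e^{\mu t}h(z))}}\le C_3|h(z)|^{-1} e^{2C_4} e^{-2ct}\xrightarrow[t\to+\infty]{}0,$$
so the $\limsup$ in \eqref{T3e2} is finite and $\tilde\gamma_z$ is Lipschitz.
\end{proof}
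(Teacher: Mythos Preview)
There is a genuine gap in the distance estimate. You write that ``by monotonicity (now of the distance, for $\Omega\supset S'$ with $S'$ a fixed strip containing the tail of the orbit) \ldots\ there is $c>0$ with $k_\Omega(h(z),h(z)-t)\ge c\,t-C_2$.'' But domain monotonicity of the hyperbolic distance goes the other way: from $S'\subset\Omega$ one obtains $k_\Omega\le k_{S'}$, which is an \emph{upper} bound on $k_\Omega$, not the lower bound you need. The same error is repeated in the elliptic case with the sector $V\subset\Omega$. Moreover, the linear lower bound you claim is not merely unjustified, it is false in one of the cases you must cover: when the regular backward orbit converges tangentially to the Denjoy--Wolff point (case (ii1) of Proposition~\ref{backward-convergence}), $\Omega$ contains a horizontal half-plane $H$, whence $k_\Omega(h(z),h(z)-t)\le k_H(h(z),h(z)-t)\asymp\log t$, so $k_\Omega$ grows at most logarithmically and no estimate of the form $k_\Omega\ge ct-C$ can hold.

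The paper avoids this trap in two ways. For the non-tangential cases (elliptic, and non-elliptic landing at a point $\sigma\neq\tau$) it bypasses Theorem~\ref{backward} entirely and uses that $\angle\lim_{z\to\sigma}G(z)=0$ together with non-tangential convergence, so $G(\tilde\gamma_z(t))\to 0$ and Lemma~\ref{generator-lemma} applies directly. For the tangential case it does use Theorem~\ref{backward}, but the lower bound on $k_\Omega$ comes from Lemma~\ref{distancelemma}, namely $k_\Omega(h(z),h(z)-t)\ge\tfrac14\log\bigl(t/\delta_\Omega(h(z)-t)\bigr)$, which combines with $\lambda_\Omega(h(z)-t)\le 1/\delta_\Omega(h(z)-t)$ and the fact that $\delta_\Omega(h(z)-t)$ has a positive limit to give a quotient $\lesssim 1/\sqrt{t\,\delta_\Omega(h(z)-t)}\to 0$. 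Your density bounds are fine; the fix is to replace the faulty monotonicity step for $k_\Omega$ by this Koebe-type distance lemma (which yields a logarithmic lower bound that is nonetheless sufficient), and to treat the half-plane case explicitly.
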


The notion of regular backward orbit will be defined in Subsection \ref{2.2}.

\begin{cor}\label{convex}
	Let $(\phi_t)$ be a semigroup in $\D$ such that its Koenigs domain $\Omega$ is convex. Every backward orbit of $(\phi_t)$ is Lipschitz.
\end{cor}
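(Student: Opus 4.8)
The plan is to derive Corollary \ref{convex} directly from Theorem \ref{backward} by showing that the convexity of the Koenigs domain $\Omega$ forces the relevant $\limsup$ to be finite. Since a convex domain is automatically a John domain and, more to the point, satisfies a good lower bound for the hyperbolic density in terms of the distance to the boundary, I would start from the standard two-sided estimate
\begin{equation*}
\frac{1}{2\,\mathrm{dist}(w,\partial\Omega)}\le\lambda_\Omega(w)\le\frac{1}{\mathrm{dist}(w,\partial\Omega)},\qquad w\in\Omega,
\end{equation*}
where for convex $\Omega$ the left-hand inequality can be improved to $\lambda_\Omega(w)\ge \frac{1}{2\,\mathrm{dist}(w,\partial\Omega)}$ with the factor actually being sharp; what matters is the upper bound $\lambda_\Omega(w)\le 1/\mathrm{dist}(w,\partial\Omega)$, valid in any simply connected domain, together with a lower bound on $k_\Omega$.

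Next I would treat the non-elliptic case (a): the orbit point is $w(t)=h(z)-t$, which moves along a horizontal ray in $\Omega$ toward $\partial\Omega$ as $t\to T_z$. The numerator $\lambda_\Omega(h(z)-t)$ is controlled from above by $1/\mathrm{dist}(h(z)-t,\partial\Omega)$. For the denominator I need a \emph{lower} bound on $k_\Omega(h(z),h(z)-t)$, and here is where convexity enters decisively: in a convex domain the hyperbolic distance from a fixed point to a point at boundary-distance $\delta$ grows at least like $\tfrac12\log(1/\delta)+O(1)$, because one can integrate $\lambda_\Omega\ge \frac{1}{2\,\mathrm{dist}(\cdot,\partial\Omega)}$ along the segment and the distance-to-boundary along that segment is controlled linearly — precisely, for a convex domain and a point $w$ with $\mathrm{dist}(w,\partial\Omega)=\delta$ small, $k_\Omega(w_0,w)\ge \tfrac12\log\frac{1}{\delta}-C$. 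Hence $e^{2k_\Omega(h(z),h(z)-t)}\ge c/\mathrm{dist}(h(z)-t,\partial\Omega)$, and the quotient in \eqref{T3e1} is bounded by $\frac{1}{\mathrm{dist}}\cdot\frac{\mathrm{dist}}{c}=\frac1c<+\infty$. The elliptic case (b) is handled the same way, now with $w(t)=e^{\mu t}h(z)$; one notes $\mathrm{Re}\,\mu>0$ so $|e^{\mu t}h(z)|\to\infty$ unless $h(z)$ stays in a bounded part, but in all cases $w(t)$ approaches $\partial\Omega$ (or escapes, in which case $T_z=\infty$ and $\lambda_\Omega(w(t))\to0$ fast enough); the extra factor $e^{\mathrm{Re}\,\mu t}=|h(\phi_t(z))/h(z)|^{-1}$... more cleanly, $e^{\mathrm{Re}\,\mu t}\lambda_\Omega(e^{\mu t}h(z))$ is exactly the pullback density $\lambda_{e^{-\mu t}\Omega}(h(z))$-type quantity, so the same boundary-distance comparison applies after the linear change of variable $w\mapsto e^{\mu t}w$, which maps the convex domain $\Omega$ to the convex domain $e^{\mu t}\Omega$.

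The main obstacle is pinning down the lower bound $k_\Omega(w_0,w)\ge\tfrac12\log(1/\mathrm{dist}(w,\partial\Omega))-C$ for convex $\Omega$ with a constant $C$ independent of where $w$ lies on $\partial\Omega$: one must be careful that the segment from $w_0$ to $w$ stays in $\Omega$ (true by convexity) and that along it the distance to the boundary does not behave worse than linearly in the parameter (also a consequence of convexity, since the nearest boundary supporting line gives $\mathrm{dist}(\,(1-s)w_0+sw,\partial\Omega)\le (1-s)\,\mathrm{dist}(w_0,\partial\Omega)+s\,\mathrm{dist}(w,\partial\Omega)$ is false in general but the reverse-type control $\mathrm{dist}((1-s)w_0+sw,\partial\Omega)\ge (1-s)\,\mathrm{dist}(w_0,\partial\Omega)$ does hold, which is what is needed). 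I would then record this as a short lemma — "in a convex domain, $e^{2k_\Omega(w_0,w)}\,\mathrm{dist}(w,\partial\Omega)$ is bounded below by a positive constant depending only on $w_0$ and $\Omega$" — and combine it with $\lambda_\Omega\le 1/\mathrm{dist}(\cdot,\partial\Omega)$ to conclude that both \eqref{T3e1} and \eqref{T3e2} hold for every $z\in\D$, whence every backward orbit is Lipschitz by Theorem \ref{backward}.
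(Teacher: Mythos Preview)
Your overall strategy---bound $\lambda_\Omega$ above by $1/\delta_\Omega$ and bound $k_\Omega$ below using convexity, then feed into Theorem \ref{backward}---matches the paper's. In the non-elliptic case your argument goes through and is essentially the paper's proof: both rest on Lemma \ref{distancelemma} with the improved constant $\tfrac12$ for convex domains. (One caveat: your justification of the lower bound $k_\Omega(w_0,w)\ge\tfrac12\log(1/\delta_\Omega(w))-C$ by ``integrating $\lambda_\Omega\ge 1/(2\delta_\Omega)$ along the segment'' is backwards---integrating along \emph{any} curve gives an \emph{upper} bound on $k_\Omega$, not a lower one. The correct derivation is simply Lemma \ref{distancelemma} itself, using $|w_0-w|\ge\delta_\Omega(w_0)-\delta_\Omega(w)$ when $\delta_\Omega(w)$ is small.)

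The genuine gap is in the elliptic case. Your lemma ``$e^{2k_\Omega(w_0,w)}\,\delta_\Omega(w)\ge c(w_0)$'' is true but too weak: applying it with $w=e^{\mu t}h(z)$ yields only
\[
\frac{e^{\mathrm{Re}\mu t}\lambda_\Omega(e^{\mu t}h(z))}{e^{2k_\Omega(h(z),e^{\mu t}h(z))}}\le \frac{e^{\mathrm{Re}\mu t}}{c},
\]
which blows up when $T_z=+\infty$ (this happens, e.g., for regular backward orbits in a convex sector). The change-of-variable $w\mapsto e^{\mu t}w$ does not rescue this, because the constant $c$ in your lemma depends on both the domain and the basepoint, and after the rescaling both vary with $t$. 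What is actually needed is the \emph{full} distance lemma, retaining the factor $|w_0-w|=|e^{\mu t}-1||h(z)|$: this gives $e^{2k_\Omega}\ge |e^{\mu t}-1||h(z)|/\delta_\Omega(e^{\mu t}h(z))$, and it is precisely this $|e^{\mu t}-1|\ge e^{\mathrm{Re}\mu t}-1$ that absorbs the extra exponential in \eqref{T3e2}. The paper does exactly this, obtaining the uniform bound $e^{\mathrm{Re}\mu t}/\big((e^{\mathrm{Re}\mu t}-1)|h(z)|\big)$.
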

	
\medskip

By \cite{Gum}, we know that given $\zeta\in\partial\D$, the non-tangential limit $\phi_t(\zeta):=\angle\lim_{z\to\zeta}\phi_t(z)$ exists always finitely. In particular, the function
\begin{equation}\label{boundary orbit}
	[0,+\infty)\ni t\mapsto\phi_t(\zeta)
\end{equation}
is continuous. Therefore, we may consider forward orbits with starting points on the unit circle. A forward orbit emanating from a point on the boundary may lie fully on the unit circle or have an initial part on the boundary (even if it is a singleton) and then fall inside the unit disk until it reaches the Denjoy-Wolff point. The part of the forward orbit lying inside $\D$ can be considered as the union of a backward and a usual forward orbit. Thus it can be treated through the previous results. For this reason, given $\zeta\in\partial\D$ we will only deal with the part $\gamma_\zeta([0,+\infty))\cap\partial\D$ and only in the case when this intersection is larger than a singleton (if it is a singleton, then there is no sense in examining the Lipschitz condition). We will call such sets \textit{boundary orbits} for the sake of convenience. Thus, for $\zeta\in\partial\D$, we will denote by $\gamma_\zeta$ only the boundary orbit and not the full forward orbit of $\zeta$. The interested reader may refer to \cite{BG} for the first appearance of such orbits. As circular arcs, boundary orbits are clearly rectifiable. So, we will only examine the Lipschitz condition for such orbits.

Boundary orbits require a different approach than before. We are going to classify boundary orbits into three categories and treat each of them separately. We will proceed to a brief explanation of these categories and their behavior concerning the Lipschitz condition. If the boundary orbit reaches the Denjoy-Wolff point, we will call it \textit{exceptional}. So exceptional orbits may appear only in non-elliptic semigroups. On the other hand, if the boundary orbit ends at some point $\sigma$ other than the Denjoy-Wolff point and the image through the Koenigs function of $\sigma$ has positive distance from the rest of the boundary of the Koenigs domain, we will call it \textit{isolated}. These definitions are inspired from \cite[Chapter 14]{book} and \cite[Chapter 15]{book}, respectively. In Theorem \ref{thm:exc-isol}, we will prove that all exceptional orbits are Lipschitz and all isolated orbits are not Lipschitz. Finally, the third category contains all the boundary orbits that do not fall in the first two. We will call them \textit{boundary orbits of the third type}. In this case, some of them are Lipschitz and some are not. For those orbits we are going to prove Theorem \ref{thm:third} which offers a necessary and sufficient condition similar to that in Theorem \ref{backward}.

\medskip

The structure of the article is as follows: First, in Section 2 we are going to mention all the information that is deemed necessary for our proofs. We will prove Theorem \ref{rectifiability} in Section 3. Next, in Section 4 we will deal with orbits strictly contained in the unit disk proving all the relative results with regard to the Lipschitz condition. Furthermore, we will provide explicit examples demonstrating that the situation described in Theorem \ref{backward} can indeed occur. Then, in Section 5 we work with orbits lying on the unit circle. Finally, in Section 6 we are going to examine under what assumptions our results may be extended to a semigroup in any simply connected domain $D\subsetneq\C$ other than the unit disk $\D$.


\section{{\bf Preliminaries}}

\subsection{Semigroup theory}\label{2.1}
Let $(\phi_t)$ be a semigroup in $\D$ with Koenigs function $h$ and Koenigs domain $\Omega$. When $(\phi_t)$ is elliptic with spectral value $\mu$, the Koenigs domain is \textit{$\mu$-spirallike}; this means that $e^{-\mu t}\Omega\subseteq\Omega$, for all $t\ge0$. Moreover, given $z\in\mathbb{D}$, the image of its forward orbit is the half-spiral $\{e^{-\mu t}h(z):t\ge0\}$. For non-elliptic semigroups, $\Omega$ is \textit{convex in the positive direction} (also known as \textit{starlike at infinity}); this means that $\Omega+t\subseteq\Omega$, for all $t\ge0$. 
The image of the forward orbit $\gamma_z$ through $h$ is the horizontal half-line $\{h(z)+t:t\ge0\}$. 

\medskip




\medskip

 Let $G$ be the infinitesimal generator of $(\phi_t)$. There exists a useful relation between the Koenigs function and the infinitesimal generator. Indeed (see \cite[Theorem 10.1.4 and Corollary 10.1.12]{book}), for an elliptic semigroup with spectral value $\mu$ and Denjoy-Wolff point $\tau$,
\begin{equation}\label{hG elliptic}
G(z)=-\mu\frac{h(z)}{h'(z)}\;\;\;\hbox{and}\;\;\;G'(\tau)=-\mu,\;\;\;z\in\D.
\end{equation}
On the other side, when $(\phi_t)$ is non-elliptic,
\begin{equation}\label{hG nonelliptic}
G(z)=\frac{1}{h'(z)},\;\;\;z\in\D. 
\end{equation}

\medskip

\subsection{Hyperbolic geometry}\label{hyper-subsection}

Even though we primarily work with Euclidean quantities, it will turn out later on that the hyperbolic geometry can provide valuable assistance with regard to the Lipschitz condition. In this part of the present article, we will briefly mention certain hyperbolic quantities that we are going to need. For a more detailed presentation of hyperbolic geometry, we refer the interested reader to \cite[Chapter 5]{book}.

We start with the \textit{hyperbolic metric} in the unit disk $\D$ which is given through the formula
\begin{equation}\label{hyperbolicmetric}
	\lambda_{\D}(z)|dz|=\frac{1}{1-|z|^2}|dz|, \quad z\in\D.
\end{equation}
The function $\lambda_{\D}$ is called the \textit{hyperbolic density} of $\D$. In addition, the \textit{hyperbolic distance} $k_{\D}$ in the unit disk $\D$ is given by
\begin{equation}\label{hyperbolicdistance}
	k_{\D}(z,w)=\frac{1}{2}\log\frac{|1-\bar{z}w|+|z-w|}{|1-\bar{z}w|-|z-w|}, \quad z,w\in\D.
\end{equation}
If $\Omega\subsetneq\C$ is a simply connected domain and $f:\Omega\to\D$ is a corresponding Riemann map, then we may define the hyperbolic density $\lambda_{\Omega}$ and the hyperbolic distance $k_\Omega$ in $\Omega$ through the relations
\begin{equation}\label{hyperbolicinvariance}
	\lambda_{\Omega}(z)=\lambda_{\D}(f(z))|f'(z)|, \quad k_\Omega(z,w)=k_{\D}(f(z),f(w)),
\end{equation}
where $z,w\in\Omega$. This definition is independent of the choice of the Riemann mapping $f$.

The hyperbolic quantities inside a simply connected domain $\Omega$ are closely associated with the Euclidean distance from the boundary $\partial\Omega$. This correlation will be very useful in the sequel and may be demonstrated by means of the following inequalities. For the rest of the present work, given a domain $\Omega\subsetneq\C$ and $z\in\Omega$, we are going to use the notation $\delta_\Omega(z):=\textup{dist}(z,\partial\Omega)$.

\begin{lm}{\cite[Theorem 5.2.1]{book}}\label{hyperbolicmetric-Euclideandistance}
	Let $\Omega\subsetneq\C$ be a simply connected domain. Then, for every $z\in\Omega$,
	\begin{equation*}
		\frac{1}{4\delta_\Omega(z)}\le\lambda_{\Omega}(z)\le\frac{1}{\delta_\Omega(z)}.
	\end{equation*}
\end{lm}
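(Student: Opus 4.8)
The plan is to establish the two inequalities separately, using in each case the conformal invariance of the hyperbolic density recorded in \eqref{hyperbolicinvariance} together with one classical ingredient from geometric function theory.

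For the upper bound I would exploit the monotonicity of the hyperbolic density under domain inclusion. Fix $z\in\Omega$ and set $r:=\delta_\Omega(z)$. The Euclidean disk $B(z,r):=\{w\in\C:|w-z|<r\}$ is contained in $\Omega$, so a Riemann map of $\Omega$ onto $\D$ composed with a Riemann map of $B(z,r)$ onto $\D$ gives a holomorphic self-map of $\D$; the Schwarz--Pick lemma (equivalently, the fact that $\lambda$ decreases when the domain increases) yields $\lambda_\Omega(z)\le\lambda_{B(z,r)}(z)$. Using the affine map $w\mapsto(w-z)/r$ from $B(z,r)$ onto $\D$ in \eqref{hyperbolicinvariance}, together with \eqref{hyperbolicmetric}, one computes $\lambda_{B(z,r)}(z)=1/r=1/\delta_\Omega(z)$, which is the asserted upper estimate.

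For the lower bound I would invoke the Koebe one-quarter theorem. Let $f:\D\to\Omega$ be a Riemann map with $f(0)=z$, and let $g:=f^{-1}$. Then $g(z)=0$ and $|g'(z)|=1/|f'(0)|$, so \eqref{hyperbolicinvariance} and $\lambda_{\D}(0)=1$ give $\lambda_\Omega(z)=1/|f'(0)|$. On the other hand, applying the Koebe theorem to the univalent function $f/f'(0)$ shows that $\Omega=f(\D)$ contains the Euclidean disk of radius $|f'(0)|/4$ centred at $z$, whence $\delta_\Omega(z)\ge|f'(0)|/4$. Combining the two facts yields $\lambda_\Omega(z)=1/|f'(0)|\ge 1/(4\delta_\Omega(z))$.

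The only genuinely nontrivial input is the Koebe one-quarter theorem used for the lower bound; the upper bound is essentially monotonicity plus the elementary computation of the hyperbolic density of a disk at its centre. I do not anticipate any real obstacle beyond correctly tracking the normalisations in \eqref{hyperbolicinvariance}. (Alternatively one may deduce both bounds from the standard two-sided distortion estimates for univalent functions, but the argument above is the most economical.)
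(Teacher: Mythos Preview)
Your argument is correct and is in fact the standard proof of this classical estimate. Note, however, that the paper does not supply its own proof of this lemma: it is simply quoted from \cite[Theorem 5.2.1]{book} and used as a black box, so there is no ``paper's approach'' to compare against. Your derivation via domain monotonicity for the upper bound and the Koebe one-quarter theorem for the lower bound is exactly how the result is established in the cited reference.
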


\begin{lm}{\cite[Theorem 5.3.1, Theorem 5.3.3]{book}}\label{distancelemma}
	Let $\Omega\subsetneq\C$ be a simply connected domain. Then, for every $z,w\in\Omega$,
	\begin{equation}\label{distancelemmaequation}
		k_{\Omega}(z,w)\ge\frac{1}{4}\log\left(1+\frac{|z-w|}{\min\{\delta_\Omega(z),\delta_\Omega(w)\}}\right).
	\end{equation}
	If, in addition, $\Omega$ is convex, then the constant $\frac{1}{4}$ may be replaced by $\frac{1}{2}$.
\end{lm}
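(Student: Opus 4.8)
The plan is to pass from the hyperbolic distance to the Euclidean "quasihyperbolic" integral $\int\delta_\Omega^{-1}\,|d\zeta|$ by means of the lower bound for the density in Lemma~\ref{hyperbolicmetric-Euclideandistance}, and then to estimate that integral using nothing more than the $1$-Lipschitz property of the function $\delta_\Omega$. Recall that $k_\Omega$ is the length metric attached to $\lambda_\Omega$, so if $\sigma\subset\Omega$ denotes the hyperbolic geodesic joining $z$ and $w$ then $k_\Omega(z,w)=\int_\sigma\lambda_\Omega(\zeta)\,|d\zeta|$; more generally the right-hand side, with $\sigma$ replaced by any rectifiable arc in $\Omega$ from $z$ to $w$, is $\ge k_\Omega(z,w)$ reversed, i.e. it is enough to bound $\int_\sigma\lambda_\Omega(\zeta)\,|d\zeta|$ from below for an arbitrary such arc. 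By Lemma~\ref{hyperbolicmetric-Euclideandistance} this reduces to proving
\begin{equation*}
\int_\sigma\frac{|d\zeta|}{\delta_\Omega(\zeta)}\ge\log\!\left(1+\frac{|z-w|}{\min\{\delta_\Omega(z),\delta_\Omega(w)\}}\right).
\end{equation*}

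For this last display, parametrize $\sigma$ by Euclidean arc length, $\sigma:[0,\ell]\to\Omega$ with $\sigma(0)=z$, $\sigma(\ell)=w$, so that $\ell\ge|z-w|$. Since $\delta_\Omega$ is $1$-Lipschitz on $\C$ (being a distance function) and $|\sigma(s)-z|\le s$, we have $\delta_\Omega(\sigma(s))\le\delta_\Omega(z)+s$ for every $s\in[0,\ell]$, whence
\begin{equation*}
\int_\sigma\frac{|d\zeta|}{\delta_\Omega(\zeta)}=\int_0^\ell\frac{ds}{\delta_\Omega(\sigma(s))}\ge\int_0^\ell\frac{ds}{\delta_\Omega(z)+s}=\log\!\left(1+\frac{\ell}{\delta_\Omega(z)}\right)\ge\log\!\left(1+\frac{|z-w|}{\delta_\Omega(z)}\right).
\end{equation*}
Running the same computation starting from the endpoint $w$ gives the analogous bound with $\delta_\Omega(w)$ in the denominator, and keeping the larger of the two lower bounds produces exactly \eqref{distancelemmaequation} with the constant $\tfrac14$.

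It remains to improve $\tfrac14$ to $\tfrac12$ when $\Omega$ is convex, and for this it suffices to sharpen the pointwise density estimate to $\lambda_\Omega(p)\ge\tfrac1{2\delta_\Omega(p)}$. Fix $p\in\Omega$, choose $q\in\partial\Omega$ with $|p-q|=\delta_\Omega(p)$, and let $H$ be an open half-plane containing $\Omega$ whose boundary line supports $\Omega$ at $q$; such an $H$ exists by convexity. Then $\delta_H(p)=|p-q|=\delta_\Omega(p)$, and since $\Omega\subseteq H$ the hyperbolic density decreases, so $\lambda_\Omega(p)\ge\lambda_H(p)$; as $\lambda_H(\cdot)=\tfrac1{2\delta_H(\cdot)}$ for a half-plane, we obtain $\lambda_\Omega(p)\ge\tfrac1{2\delta_\Omega(p)}$. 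Repeating the two displays above with $\tfrac12$ in place of $\tfrac14$ finishes the proof. The argument is short; the only points requiring care are the $1$-Lipschitz property of $\delta_\Omega$ and, in the convex case, the existence of a supporting line at the nearest boundary point, so no genuine obstacle arises.
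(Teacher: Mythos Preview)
Your proof is correct. The paper does not supply its own proof of this lemma; it simply quotes the result from \cite[Theorems~5.3.1 and~5.3.3]{book}. The argument you give --- pass from $\lambda_\Omega$ to the quasihyperbolic density via Lemma~\ref{hyperbolicmetric-Euclideandistance}, then bound $\int_\sigma\delta_\Omega^{-1}\,|d\zeta|$ using the $1$-Lipschitz property of $\delta_\Omega$ along an arc-length parametrization, and in the convex case sharpen the density estimate by comparison with a supporting half-plane --- is precisely the standard textbook proof, and matches what one finds in the cited reference. One minor remark: the sentence containing ``is $\ge k_\Omega(z,w)$ reversed'' is garbled; it would be cleaner simply to say that since $k_\Omega(z,w)=\inf_\sigma\int_\sigma\lambda_\Omega\,|d\zeta|$, it suffices to prove the lower bound for every rectifiable arc $\sigma$ from $z$ to $w$ (or, equivalently, for the geodesic). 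This is exactly what your computation does.
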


\medskip

\subsection{Backward orbits}\label{2.2}
Given a semigroup $(\phi_t)$ and a point $z\in\D$, the backward orbit $\tilde{\gamma}_z:[0,T_z)\to\D$ is called \textit{regular} if $T_z=+\infty$ and
$$\limsup\limits_{t\to+\infty}k_{\D}(\tilde{\gamma}_z(t),\tilde{\gamma}_z(t+1))<+\infty.$$
If $T_z=+\infty$ and the above upper limit  is infinite, the backward orbit is called \textit{non-regular}.

It can be readily verified that the image of the backward orbit $\tilde{\gamma}_z$ through the Koenigs function $h$ is either the spiral arc or half-spiral $\{e^{\mu t}h(z):t\in[0,T_z)\}$ when $(\phi_t)$ is elliptic with spectral value $\mu$, or the horizontal segment or half-line $\{h(z)-t:t\in[0,T_z)\}$ when $(\phi_t)$ is non-elliptic.

Contrary to forward orbits, the asymptotic behavior of backward orbits conceals some intricacies. Depending on the type of the semigroup (elliptic or non-elliptic) and the type of the backward orbit (regular or non-regular), the convergence of $\tilde{\gamma}_z$, as $t\to T_z$, varies. In fact, when $T_z<+\infty$, $\tilde{\gamma}_z$ converges to some point of $\partial\mathbb{D}\setminus\{\tau\}$, tangentially or not. When $T_z=+\infty$, combining \cite[Lemma 13.1.5, Proposition 13.1.7]{book}, we have the following:

\begin{prop}\label{backward-convergence}
Let $(\phi_t)$ be a semigroup in $\mathbb{D}$ with Denjoy-Wolff point $\tau$. Let $z\in\D$ with $T_z=+\infty$. Then:
\begin{enumerate}
\item[\textup{(i)}] If $(\phi_t)$ is elliptic (i.e. $\tau\in\mathbb{D}$), then
\begin{enumerate}
	\item[\textup{(i1)}] either $z=\tau$ and $\tilde{\gamma}_z(t)=\tau$, for all $t\ge0$,
	\item[\textup{(i2)}] or $z\ne\tau$, $\tilde{\gamma}_z$ is regular and converges to a point of the unit circle  non-tangentially,
	\item[\textup{(i3)}] or $z\ne\tau$, $\tilde{\gamma}_z$ is non-regular and converges to a point of the unit circle in any manner.
\end{enumerate}
\item[\textup{(ii)}] If $(\phi_t)$ is non-elliptic (i.e. $\tau\in\partial\mathbb{D})$, then
\begin{enumerate}
	\item[\textup{(ii1)}] either $\tilde{\gamma}_z$ is regular and converges tangentially to $\tau$,
	\item[\textup{(ii2)}] or $\tilde{\gamma}_z$ is regular and converges non-tangentially to some $\sigma\in\partial\mathbb{D}\setminus\{\tau\}$,
	\item[\textup{(ii3)}] or $\tilde{\gamma}_z$ is non-regular and converges to some $\sigma\in\partial\mathbb{D}\setminus\{\tau\}$ in any manner.
\end{enumerate}
\end{enumerate}
\end{prop}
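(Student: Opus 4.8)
The plan is to rest the classification on two ingredients: an elementary analysis of the backward orbit via the Koenigs function, which settles the degenerate case and forces every non-constant backward orbit with $T_z=+\infty$ to cluster only on $\partial\D$, together with the two results cited from \cite[Chapter 13]{book}, which promote clustering to genuine convergence and supply the regular/non-regular dichotomy and the description of the mode of approach.

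First I would dispose of the only degenerate possibility. If $(\phi_t)$ is elliptic with Denjoy--Wolff point $\tau\in\D$ and $z=\tau$, then $\phi_t(\tau)=\tau$ for all $t\ge0$, so $\phi_t^{-1}$ is defined at $\tau$ for every $t$, $T_\tau=+\infty$, and $\tilde\gamma_\tau\equiv\tau$; this is case (i1). In every other situation I would use the Koenigs equation to pin down the image of the backward orbit: \eqref{Koenigs elliptic} gives $h(\tilde\gamma_z(t))=e^{\mu t}h(z)$ and \eqref{Koenigs non-elliptic} gives $h(\tilde\gamma_z(t))=h(z)-t$, matching the description in Subsection~\ref{2.2}. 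Since $z\ne\tau$ forces $h(z)\ne0$ and $\textup{Re}\mu>0$, in the elliptic case $|h(\tilde\gamma_z(t))|=e^{t\textup{Re}\mu}|h(z)|\to+\infty$, while in the non-elliptic case $h(\tilde\gamma_z(t))=h(z)-t\to\infty$; in both cases $h\circ\tilde\gamma_z$ eventually leaves every compact subset of $\Omega$, and since $h$ is a homeomorphism onto $\Omega$ this yields $|\tilde\gamma_z(t)|\to1$ as $t\to+\infty$, so every accumulation point of $\tilde\gamma_z$ lies on $\partial\D$.

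Next I would invoke \cite[Proposition 13.1.7]{book} to upgrade this to the statement that $\tilde\gamma_z$ converges, as $t\to+\infty$, to a single point $\xi\in\partial\D$, and that exactly one of the following holds: either $\tilde\gamma_z$ is regular, i.e. $\limsup_{t\to+\infty}k_\D(\tilde\gamma_z(t),\tilde\gamma_z(t+1))<+\infty$, or it is non-regular. Then \cite[Lemma 13.1.5]{book} would identify $\xi$ and the geometry of the approach: in the elliptic case $\xi$ may be any point of $\partial\D$, the approach being non-tangential whenever $\tilde\gamma_z$ is regular (cases (i2) and (i3)); in the non-elliptic case $\xi$ is either the Denjoy--Wolff point $\tau$ --- which can be reached only tangentially and only along a regular backward orbit (case (ii1)) --- or a boundary repelling fixed point $\sigma\in\partial\D\setminus\{\tau\}$, approached non-tangentially whenever the orbit is regular (cases (ii2) and (ii3)). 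Collecting these alternatives produces the six cases of the statement; that each one actually occurs is witnessed by the constructions recalled in the introduction (the oscillating/spiralling backward orbits of \cite{Kel} realize the ``in any manner'' cases (i3) and (ii3), and the standard hyperbolic and parabolic linear-fractional models realize the regular cases).

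I expect the real content to sit entirely in this last step, namely in the statements about the mode of convergence: that a regular backward orbit approaches $\xi$ non-tangentially, and that in the non-elliptic case the Denjoy--Wolff point can be reached only tangentially and only by a regular orbit. These are precisely what is being imported from \cite[Lemma 13.1.5, Proposition 13.1.7]{book}; the reduction of the case $z=\tau$, the escape-to-the-boundary argument via the Koenigs function, and the bookkeeping of combining the dichotomies are all routine. A secondary point to check carefully is that the convergence of a \emph{non-regular} backward orbit to a single boundary point --- rather than merely having a non-degenerate cluster set --- is genuinely included in \cite[Proposition 13.1.7]{book}, since this is the one place where ``in any manner'' might otherwise be misread.
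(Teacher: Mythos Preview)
Your proposal is correct and matches the paper's approach exactly: the paper offers no proof of its own for this proposition, stating only that it follows by ``combining \cite[Lemma 13.1.5, Proposition 13.1.7]{book}''. Your write-up simply unpacks that citation, adding the routine observations (the degenerate case $z=\tau$ and the escape-to-the-boundary argument via the Koenigs function) that the paper leaves implicit.
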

From now on, we disregard the case when the semigroup is elliptic and the backward orbit is constant and equal to the Denjoy-Wolff point. As a matter of fact, this last proposition shows that when working in backward terms, elliptic and non-elliptic semigroups are not really different from a dynamical standpoint. 

The existence of a regular backward orbit $\tilde{\gamma}_z$ discloses helpful geometric information about the Koenigs domain $\Omega$ of a semigroup $(\phi_t)$. More specifically, we have the following implications:
\begin{enumerate}
	\item[(a)] If $(\phi_t)$ is elliptic, then there exists a maximal spirallike sector $V$ such that $\tilde{\gamma}_z([0,+\infty))\subset V\subset\Omega$.
	\item[(b)] If $(\phi_t)$ is non-elliptic and $\tilde{\gamma}_z$ converges to $\tau$, then there exists a maximal horizontal half-plane $H$ such that $\tilde{\gamma}_z([0,+\infty))\subset H\subset\Omega$.
	\item[(c)] If $(\phi_t)$ is non-elliptic and $\tilde{\gamma}_z$ converges to a point in $\partial\D\setminus\{\tau\}$, then there exists a maximal horizontal strip $S$ such that $\tilde{\gamma}_z([0,+\infty))\subset S\subset\Omega$.
\end{enumerate}
In all three cases, the notion of maximality signifies that there exists no other spirallike sector/horizontal half-plane/horizontal strip properly containing $V$/$H$/$S$ and contained inside $\Omega$.

To end the section, we make one brief mention about the ``union'' of a forward with a backward orbit. Let $z\in\D$. Then the curve $\hat{\gamma}_z:(-T_z,+\infty)\to\D$ defined through
$$\hat{\gamma}_z(t)=
\begin{cases}
	\gamma_z(t), \quad\text{for }t\ge0,\\
	\tilde{\gamma}_z(-t), \quad\text{for }t\le0,
\end{cases}
$$
is called the \textit{full orbit} of $z$ (also seen as \textit{maximal invariant curve} in literature). It can be easily seen that the full orbit of $z$ is essentially the same as the full orbit of any point on $\gamma_z([0,+\infty))$ or $\tilde{\gamma}_z([0,T_z))$.

\subsection{Finite shift}
Let $(\phi_t)$ be a non-elliptic semigroup in $\D$ with Denjoy-Wolff point $\tau\in\partial\D$. For $R>0$ consider $E(\tau,R)$ to be the \textit{horodisk} of $\D$ with center $\tau$ and radius $R>0$. In other words, $E(\tau,R)$ is a Euclidean disk of radius $\frac{R}{R+1}$ that is internally tangent to $\partial\D$ at the point $\tau$. Analytically, we may write
$$E(\tau,R)=\left\{z\in\D:\dfrac{|\tau-z|^2}{1-|z|^2}<R\right\}.$$
By means of these horodisks, we may proceed to a classification within the class of non-elliptic semigroups. Indeed, $(\phi_t)$ is said to be of \textit{finite shift} if for every $z\in\D$ there exists some $R_z>0$ such that the orbit $\gamma_z$ does not intersect the horodisk $E(\tau,R_z)$ even though it converges to $\tau$. For more background on finite shift, we refer the interested reader to \cite[Section 17.7]{book}. It can be proved that if there exists some $z\in\D$ which satisfies the above condition, then every $z\in\D$ does. If, on the contrary, for some (and hence every) $z\in\D$, the orbit $\gamma_z$ intersects all horodisks centered at $\tau$, then $(\phi_t)$ is said to be of \textit{infinite shift}.

Therefore, it is easily understood that whenever $(\phi_t)$ is of finite shift, then every orbit must reach the Denjoy-Wolff point $\tau$ travelling between $\partial\D$ and some horocycle $\partial E(\tau,R)$. As a result, every orbit ``stays very close'' to the unit circle while converging to $\tau$. For this reason, semigroups of finite shift are also called \textit{strongly tangential} in the literature.

Later on in the present article, we will prove a result (Proposition \ref{propshift}) about the Lipschitz condition with respect to the shift of a non-elliptic semigroup. In order to do so, we will utilize the ``translation'' of finite shift in the setting of the standard right half-plane $\mathbb{H}$. Consider $C:\D\to\mathbb{H}$ to be the Cayley transform with $C(z)=\frac{\tau+z}{\tau-z}$. It is quite straightforward that given $R>0$ the horodisk $E(\tau,R)$ is mapped through $C$ conformally onto the vertical half-plane $\{w\in\mathbb{H}:\textup{Re}w>\frac{1}{R}\}$. Consequently, $(\phi_t)$ is of finite shift if and only if for each $z\in\D$, there exists some $M_z>0$ such that 
$$\textup{Re}C(\phi_t(z))=\textup{Re}C(\gamma_z(t))<M_z, \quad\text{for all }t\ge0.$$

One final essential piece of information concerning semigroups of finite shift is the fact that they are necessarily \textit{parabolic of positive hyperbolic step}. This means that their Koenigs domains are contained in some horizontal half-plane, but not inside a horizontal strip; see \cite[Theorem 9.4.10]{book}. The actual definitions of parabolicity and hyperbolic step are different and require angular derivatives and hyperbolic geometry. Nevertheless, for our purposes, this equivalent counterpart suffices.

 \subsection{Distance from the boundary}\label{2.3}
 
 Let $(\phi_t)$ be a semigroup in $\D$ with Koenigs function $h$ and Koenigs domain $\Omega$. Fix $z\in\D$. In order to check whether the forward orbit $\gamma_z$ and the backward orbit $\tilde{\gamma}_z$ are Lipschitz, we will use inequalities concerning the distance of $h(z)$ from the boundary $\partial\Omega$. We will need the following classical inequality about it.
 
 \begin{lm}{\cite[Corollary 1.4]{pomm2}}\label{pommerenkelemma}
 	If $f$ maps $\D$ conformally into $\C$, then for all $z\in\D$
 	\begin{equation}\label{pommerenke}
 		\frac{1}{4}(1-|z|^2)|f'(z)|\le\delta_{f(\D)}(f(z))\le(1-|z|^2)|f'(z)|.
 	\end{equation}
 \end{lm}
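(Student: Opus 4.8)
The quickest route is to read the statement off the material already assembled in Section~2. Write $\Omega=f(\D)$. Conformal invariance of the hyperbolic metric, \eqref{hyperbolicinvariance} together with \eqref{hyperbolicmetric}, gives
$\lambda_{\Omega}(f(z))\,|f'(z)|=\lambda_{\D}(z)=\dfrac{1}{1-|z|^2}$,
hence $(1-|z|^2)|f'(z)|=1/\lambda_{\Omega}(f(z))$. Inserting this into the two-sided estimate of Lemma~\ref{hyperbolicmetric-Euclideandistance}, namely $\dfrac{1}{4\delta_{\Omega}(w)}\le\lambda_{\Omega}(w)\le\dfrac{1}{\delta_{\Omega}(w)}$ evaluated at $w=f(z)$, yields $\tfrac14(1-|z|^2)|f'(z)|\le\delta_{\Omega}(f(z))\le(1-|z|^2)|f'(z)|$ at once. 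So, granting the results quoted earlier, the proof is this one-line computation.

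If instead one wants an argument independent of Lemma~\ref{hyperbolicmetric-Euclideandistance}, I would argue directly and normalize first: pre-composing $f$ with a disc automorphism sending $0$ to $z$ (whose derivative at $0$ has modulus $1-|z|^2$) and post-composing with a translation, it suffices to treat the case $z=0$, $f(0)=0$; set $\rho=\delta_{\Omega}(0)$. For the upper bound, $B(0,\rho)\subset\Omega=f(\D)$, so $w\mapsto f^{-1}(\rho w)$ is a holomorphic self-map of $\D$ fixing $0$; the Schwarz lemma gives $\rho\,|(f^{-1})'(0)|\le 1$, i.e. $\rho\le|f'(0)|$. For the lower bound, apply the Koebe one-quarter theorem to the normalized univalent map $F=f/f'(0)$: its image contains $B(0,1/4)$, so $\Omega\supseteq B(0,|f'(0)|/4)$ and $\delta_{\Omega}(0)\ge|f'(0)|/4$. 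Undoing the normalization recovers the stated inequalities.

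The only genuinely nontrivial ingredient is the sharp constant $1/4$ in the left-hand inequality: in the first approach it is packaged inside Lemma~\ref{hyperbolicmetric-Euclideandistance}, and in the second it is precisely the Koebe one-quarter theorem, whose proof runs through Gronwall's area theorem and the extremality of the Koebe function. The upper bound, by contrast, is an elementary consequence of the Schwarz (equivalently Schwarz--Pick) lemma. Accordingly I expect the write-up to simply cite Pommerenke for the Koebe theorem rather than reproduce it, so that in practice the proof reduces to the short computation of the first paragraph.
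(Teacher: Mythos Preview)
Your argument is correct on both routes. Note, however, that the paper does not prove this lemma at all: it is stated with a bare citation to \cite[Corollary 1.4]{pomm2} and used as a black box, so there is no ``paper's own proof'' to compare against.

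One remark on your first route: Lemma~\ref{hyperbolicmetric-Euclideandistance} and Lemma~\ref{pommerenkelemma} are the same statement in different coordinates, related precisely by the identity $(1-|z|^2)|f'(z)|=1/\lambda_{\Omega}(f(z))$ that you wrote down. So deducing one from the other is an observation of equivalence rather than an independent proof; the paper is simply citing both formulations from separate sources. Your second route---normalize to $z=0$, Schwarz lemma for the upper bound, Koebe one-quarter for the lower---is the standard self-contained argument and is exactly how Pommerenke proves it in the cited reference.
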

 
 \medskip
 
Assume, first, that $(\phi_t)$ is non-elliptic. Recall that in this case, the Koenigs domain $\Omega$ is convex in the positive direction, $h(\gamma_z(t))=h(z)+t$ and $h(\tilde{\gamma}_z(t))=h(z)-t$. Combining everything, we see that $\delta_\Omega(h(z)+t)$ is a (not necessarily strictly) increasing function of $t\ge0$, while $\delta_\Omega(h(z)-t)$ is a decreasing one.
 
 When working with a forward orbit $\gamma_z$ of a non-elliptic semigroup $(\phi_t)$, the quantity $\delta_\Omega(h(z)+t)$ is always bounded from below by $\delta_\Omega(h(z))$. Of course, depending on the geometry of $\Omega$, it could also be bounded from above, as well. Either way, the distance from the boundary $\partial\Omega$ presents more interest when working with a backward orbit $\tilde{\gamma}_z$ due to the intricacies that may appear.
 
 First of all, if $T_z<+\infty$, the the image through $h$ of the orbit $\tilde{\gamma}_z$ converges to some point of $\partial\Omega$, as $t\to T_z$. Clearly then, $\lim_{t\to T_z}\delta_\Omega(h(z)-t)=0$.
 
 However, this is not always the case when $T_z=+\infty$. We distinguish the following cases:
 \begin{enumerate}
 	\item[(i)] When $\tilde{\gamma}_z$ is regular, as we said either there exists a maximal horizontal half-plane $H$ or a maximal horizontal strip $S$ contained inside $\Omega$ and containing $h\circ\tilde{\gamma}_z([0,+\infty))$. Then it is straightforward that $\delta_\Omega(h(z)-t)$ is bounded above by $\delta_\Omega(h(z))$, and we have either  $$\lim_{t\to+\infty}\delta_\Omega(h(z)-t)=\delta_H(h(z))>0$$ or $$\lim_{t\to+\infty}\delta_\Omega(h(z)-t)=\delta_S(h(z))>0.$$
 	\item[(ii)] When $\tilde{\gamma}_z$ is non-regular, then  $\lim_{t\to+\infty}\delta_\Omega(h(z)-t)=0$.
\end{enumerate}
\medskip

On the other hand, when $(\phi_t)$ is elliptic, the situation is not so clear. First and foremost, the image of the curve $\gamma_z$ through $h$ is no longer a half-line, but a half-spiral. Suppose that $\tau$ is the Denjoy-Wolff point and $\mu\in\C$, with $\text{Re}\mu>0$, is the spectral value of $(\phi_t)$. Then $h(\gamma_z(t))=e^{-\mu t}h(z)$. Clearly $\delta_\Omega(e^{-\mu t}h(z))$ is bounded for $t\ge0$. Moreover, we know that $h(\tau)=0$, which means that $\delta_\Omega(e^{-\mu t}h(z))$ converges to $\delta_\Omega(0)$. However, in general, we cannot make any further explicit observations on the upper and lower bounds or the monotonicity of $\delta_\Omega(e^{-\mu t}h(z))$. The same is true in the case of backward orbits. The only remark we can make is that when $T_z=+\infty$ and $\tilde{\gamma}_z$ is non-regular, we have $\lim_{t\to+\infty}\delta_\Omega(h(\tilde{\gamma}_z(t)))=\lim_{t\to+\infty}\delta_\Omega(e^{\mu t}h(z))=0$, while if it is regular $\lim_{t\to+\infty}\delta_\Omega(h(\tilde{\gamma}_z(t)))=\lim_{t\to+\infty}\delta_\Omega(e^{\mu t}h(z))=+\infty$ because of the spirallike sector contained inside $\Omega$.

 \subsection{Length of curves under conformal mapping}\label{hayman-wu}
 
 We denote by $\ell$ the length measure on the plane (namely, the one-dimensional Hausdorff measure). We will need (in Section 3) the following well-known 
 \textit{Hayman-Wu Theorem} which was first proved in \cite{HW} and later improved (in terms of the constant) in \cite{Oyma}.
 
 \begin{theo}\label{HWT}
 	Let  $f:\D\to\C$ be a conformal mapping and $L$ be a straight line. Then
 	\begin{equation}\label{HWTe}	\ell(f^{-1}(L\cap f(\D)))\le4\pi.
 	\end{equation}
 \end{theo}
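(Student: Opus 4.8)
The plan is to deduce the Hayman--Wu inequality from a reduction of the length to an integral against the hyperbolic density of $\Omega:=f(\D)$, followed by a harmonic‑measure estimate; write $F:=f^{-1}:\Omega\to\D$. Since both sides of \eqref{HWTe} are unchanged if $f$ is post‑composed with a Euclidean similarity (which moves $L$ onto any prescribed line) and pre‑composed with an automorphism of $\D$, I may assume $L=\R$; if $\R\cap\Omega=\varnothing$ there is nothing to prove, so assume not. Then $E:=\R\cap\Omega$ is relatively open in $\R$, hence a countable disjoint union of open intervals $I_j$, and $f^{-1}(E)=\bigsqcup_j\sigma_j$ with $\sigma_j:=f^{-1}(I_j)$ real‑analytic crosscuts of $\D$, so $\ell(f^{-1}(E))=\sum_j\ell(\sigma_j)$. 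Fix a base point $w_0\in\Omega$ with $\operatorname{Im}w_0>0$ and pre‑compose so that $F(w_0)=0$; this keeps $g_\Omega(\cdot,w_0)=-\log|F|$ real‑analytic on a neighborhood of $E$.

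By conformal invariance of the hyperbolic metric, $\lambda_\Omega(w)=\lambda_\D(F(w))\,|F'(w)|$, hence $|F'(w)|=(1-|F(w)|^2)\lambda_\Omega(w)$ and
\[
\ell\big(f^{-1}(E)\big)=\int_{E}|F'(w)|\,|dw|=\int_{E}\big(1-|F(w)|^2\big)\,\lambda_\Omega(w)\,|dw|.
\]
Using $1-x^{2}\le 2(1-x)\le-2\log x$ for $x\in(0,1]$ one gets $1-|F(w)|^2\le 2\,g_\Omega(w,w_0)$, while Lemma \ref{hyperbolicmetric-Euclideandistance} gives $\lambda_\Omega(w)\le 1/\delta_\Omega(w)$. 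The proof therefore reduces to the estimate
\[
\int_{\R\cap\Omega}\frac{g_\Omega(w,w_0)}{\delta_\Omega(w)}\,|dw|\le 2\pi,
\]
which together with the factor $2$ above gives \eqref{HWTe}.

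This last inequality is the crux, and the place where the conformal geometry of $\Omega$ is genuinely used. The idea is to realize the integrand as (comparable to) the boundary flux of $g_\Omega(\cdot,w_0)$ through the line $\R$ and to bound that flux by harmonic measure. Concretely, applying the divergence theorem to the harmonic function $g_\Omega(\cdot,w_0)$ on $\Omega\cap\{\operatorname{Im}<0\}$ (which does not contain the pole $w_0$) yields $\int_{\R\cap\Omega}\partial_y g_\Omega(w,w_0)\,dx=2\pi\,\omega_\Omega\big(w_0,\partial\Omega\cap\{\operatorname{Im}\le0\}\big)\le2\pi$; by Lemma \ref{pommerenkelemma} the quantity $g_\Omega(w,w_0)/\delta_\Omega(w)$ is, for $w\in\R\cap\Omega$, comparable to the normal derivative of $g_\Omega(\cdot,w_0)$ at the boundary point of $\partial\Omega$ nearest to $w$, so one must control the passage from the nearest‑point projection of $\R\cap\Omega$ to $\partial\Omega$ (a bounded‑multiplicity map) and handle the tangential part of $\nabla g_\Omega$. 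This is done working component by component: on each $I_j$, which is a chord of $\Omega$ with both endpoints on $\partial\Omega$, one compares $g_\Omega(\cdot,w_0)$ with the Green's function of the half‑plane or strip bounded by the line through $I_j$, reducing the claim to an explicit one–dimensional integral, and then sums using that harmonic measure is a probability measure. Carrying this through with the correct numerical constant is exactly the content of \cite{HW}, sharpened to the value $4\pi$ in \cite{Oyma}.

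I expect Step~3 — the passage from the pointwise bounds of Step~2 to the clean harmonic‑measure estimate, with a constant that survives the summation over the (possibly infinitely many, nested) chords $I_j$ — to be the main obstacle; Steps~1 and 2 are routine normalizations and distortion estimates. For the applications in this paper only the finiteness of the left–hand side of \eqref{HWTe} is needed, so any universal bound in place of $4\pi$ would suffice, but the scheme above is the one that produces the sharp constant.
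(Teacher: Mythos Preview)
The paper does not prove Theorem~\ref{HWT} at all: it is quoted as a known result from the literature, with references \cite{HW} for the original bound and \cite{Oyma} for the constant $4\pi$, and is then invoked as a black box in the proof of Theorem~\ref{rectifiability}. There is therefore no ``paper's own proof'' against which to compare your proposal.

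As for the proposal itself, it is a plausible roadmap but not a proof. Steps~1 and~2 (normalizing $L=\R$, rewriting $\ell(f^{-1}(E))$ as $\int_E (1-|F|^2)\lambda_\Omega\,|dw|$ and bounding $1-|F|^2\le 2g_\Omega(\cdot,w_0)$, $\lambda_\Omega\le 1/\delta_\Omega$) are fine. The claimed ``crux'' inequality $\int_{\R\cap\Omega} g_\Omega(w,w_0)/\delta_\Omega(w)\,|dw|\le 2\pi$, however, is asserted rather than established: the heuristic that $g_\Omega/\delta_\Omega$ at $w\in\R$ is comparable to the normal derivative of $g_\Omega$ at the nearest boundary point is not a theorem, the ``bounded multiplicity'' of the nearest-point projection of $\R\cap\Omega$ onto $\partial\Omega$ can fail badly for a general simply connected $\Omega$, and the divergence-theorem identity you quote requires boundary regularity you have not arranged. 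You yourself concede that carrying this through ``is exactly the content of \cite{HW}, sharpened \ldots\ in \cite{Oyma}'', so the proposal ultimately defers the substantive argument back to the cited sources. For the purposes of the present paper that is harmless, since the theorem is only being cited; but as a self-contained proof the proposal has a genuine gap at precisely the step you flag as the main obstacle.
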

 
 Bishop and Jones \cite{BJ} proved a more general theorem that involves Ahlfors regular sets.  We say that a connected set $L\subset \C$ is \textit{Ahlfors regular} if there exists a positive constant $C$ such that for every $w\in \C$ and every $r>0$,
 \begin{equation}\label{AR}
 \ell(L\cap\{z:|z-w|<r\})\le Cr.
 \end{equation}
 For more information about Ahlfors regularity, we refer to \cite[Chapter 7]{pomm2}.
 
 \begin{theo}\cite{BJ}\label{BJT}
 	Let $L\subset \C$ be a connected set. There exists a constant $C_L>0$ such that	for every conformal map $f:\D\to\C$,
 	\begin{equation}\label{BJTe}
 	\ell(f^{-1}(L\cap f(\D)))\le C_L
 	\end{equation}
 	if and only if $L$ is Ahlfors regular. 
 \end{theo}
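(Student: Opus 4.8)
The plan is to prove the two implications separately; the forward one (necessity) is elementary, while the converse carries the real content.

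\emph{Necessity of Ahlfors regularity.} We argue by contraposition. If $L$ fails \eqref{AR}, then for every $C>0$ there are $w\in\C$ and $r>0$ with $\ell\big(L\cap\{z:|z-w|<r\}\big)>Cr$. The similarity $f(z)=rz+w$ is a conformal map of $\D$ onto $\{z:|z-w|<r\}$, and its inverse $f^{-1}(\zeta)=(\zeta-w)/r$ multiplies all lengths by $1/r$, so
\[
\ell\big(f^{-1}(L\cap f(\D))\big)=\tfrac1r\,\ell\big(L\cap\{z:|z-w|<r\}\big)>C.
\]
Since $C$ is arbitrary, no single constant $C_L$ can dominate $\ell\big(f^{-1}(L\cap f(\D))\big)$ over all conformal $f:\D\to\C$. (This direction uses only scalings, not the connectedness of $L$.)

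\emph{Sufficiency.} Fix a conformal map $f:\D\to\C$ and put $\Omega=f(\D)$, $g=f^{-1}$. Being connected with locally finite length, $L$ is a rectifiable curve, so the area formula gives $\ell\big(g(L\cap\Omega)\big)=\int_{L\cap\Omega}|g'(w)|\,|dw|$. By the right-hand inequality in Lemma \ref{pommerenkelemma} applied to $f$ we have $|g'(w)|\le(1-|g(w)|^2)/\delta_\Omega(w)$, so it is enough to bound $\int_{L\cap\Omega}(1-|g(w)|^2)/\delta_\Omega(w)\,|dw|$ by a constant depending only on the Ahlfors constant $C$ of $L$. I would then discretise with a Whitney decomposition $\{Q_i\}$ of $\Omega$, $\operatorname{diam}(Q_i)\asymp\operatorname{dist}(Q_i,\partial\Omega)=:\delta_i$. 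On each $Q_i$ the Koebe distortion theorem (on the bounded hyperbolic ball containing $Q_i$) makes $\delta_\Omega$ and $1-|g|$ comparable to $\delta_i$ and to $\varepsilon_i:=1-|g(w_i)|$ ($w_i$ the centre of $Q_i$), and moreover $\varepsilon_i\asymp|g'(w_i)|\,\delta_i\asymp\operatorname{diam}(g(Q_i))$. Summing over the squares meeting $L$, and using that a ball of radius $\asymp\delta_i$ contains $Q_i$ so that Ahlfors regularity gives $\ell(L\cap Q_i)\le C'\delta_i$ with $C'$ depending only on $C$,
\[
\int_{L\cap\Omega}\frac{1-|g(w)|^2}{\delta_\Omega(w)}\,|dw|\ \asymp\ \sum_{Q_i\cap L\ne\varnothing}\frac{\varepsilon_i}{\delta_i}\,\ell(L\cap Q_i)\ \le\ C'\sum_{Q_i\cap L\ne\varnothing}\varepsilon_i\ \asymp\ \sum_{Q_i\cap L\ne\varnothing}\operatorname{diam}(g(Q_i)).
\]

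The remaining, decisive step is the packing bound $\sum_{Q_i\cap L\ne\varnothing}\operatorname{diam}(g(Q_i))\le C''(C)$: the conformal images of those Whitney squares of $\Omega$ that meet $L$ form a bounded-overlap family of quasi-squares in $\D$, each of diameter comparable to its distance from $\partial\D$, and their diameters must be shown summable with a bound depending only on $C$. This is exactly where the connectedness of $L$ is indispensable — for a disconnected $L$, or for the preimage of an arbitrary curve in $\D$, such a sum can diverge — so one has to use that these squares sit over a connected, upper-Ahlfors-regular set. I would handle it by a stopping-time/Carleson argument: organise the squares meeting $L$ into generations according to the dyadic size of $\delta_\Omega$ along the connected trace of $L$, bound the total $\varepsilon$-mass of each generation by that of its predecessor through a harmonic-measure (Beurling-type projection) comparison in $\D$, and sum the resulting geometric series. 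I expect this packing estimate to be the main obstacle. Once it is in place, the three stages combine to give $\ell\big(f^{-1}(L\cap f(\D))\big)\le C_L$ with $C_L$ depending on $L$ only through its Ahlfors-regularity constant and uniform in $f$; specialising to the case when $L$ is a line recovers Theorem \ref{HWT} (with a non-optimal constant).
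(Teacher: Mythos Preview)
The paper does not prove this statement at all: Theorem~\ref{BJT} is quoted from Bishop and Jones \cite{BJ} and used as a black box in the proof of Theorem~\ref{rectifiability}. There is therefore no ``paper's own proof'' to compare against.

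Assessing your argument on its own merits: the necessity direction is correct and complete. For sufficiency, the reduction via Lemma~\ref{pommerenkelemma} and the Whitney decomposition is a reasonable setup, but you yourself flag the packing bound
\[
\sum_{Q_i\cap L\ne\varnothing}\operatorname{diam}(g(Q_i))\le C''(C)
\]
as the ``decisive step'' and then only gesture at a stopping-time/Carleson scheme without carrying it out. That step is not a technicality to be filled in later; it \emph{is} the theorem. The actual Bishop--Jones argument is considerably more involved than what you sketch (it goes through $L^2$ estimates for the Cauchy integral and the equivalence of Ahlfors regularity with $L^2$-boundedness of the Cauchy transform, or alternatively through corona-type constructions), and a bare Beurling projection plus geometric-series heuristic does not obviously close the gap --- in particular, you have not explained how connectedness of $L$ enters quantitatively to force the generational masses to decay geometrically. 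As written, the sufficiency proof is an outline with its central lemma missing.
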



 \section{{\bf Rectifiability of orbits}}
 We first prove an elementary lemma.
 \begin{lm}\label{spirals}
 Let $w_o\in\C$, $\alpha, \beta\in\R$. Consider the spiral curve 
 \begin{equation}\label{spiralse}
 \gamma(t)=w_o \;e^{(\alpha+i\beta) t},\;\;\;\; t\in [0,+\infty).
 \end{equation}
 Its trace $A=\{\gamma(t): t\in [0,+\infty)\}$ is Ahlfors regular.
 \end{lm}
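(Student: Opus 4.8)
The plan is to dispose of the degenerate cases first and then treat the genuine spiral case $\alpha\neq 0$ by a dyadic decomposition. If $w_o=0$, or if $\alpha=\beta=0$, then $A$ is a single point and there is nothing to prove. If $\alpha=0$ and $\beta\neq 0$, then $A$ is the circle $\{z:|z|=|w_o|\}$, which is plainly Ahlfors regular (its intersection with any disk of radius $r$ is a single sub-arc, of length at most $2\pi r$). Hence we may assume $\alpha\neq 0$; moreover, since the defining inequality \eqref{AR} is preserved, with the same constant, under the similarity $z\mapsto z/|w_o|$, we may also normalize $|w_o|=1$.

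For $\alpha\neq 0$ I would cut the half-spiral along dyadic annuli. For $k\in\mathbb{Z}$ put $B_k:=\{t\ge 0:2^k\le|\gamma(t)|\le 2^{k+1}\}$. Since $|\gamma(t)|=e^{\alpha t}$ is strictly monotone, $B_k$ is a (possibly degenerate) closed interval, of length $(\log 2)/|\alpha|$ whenever it has nonempty interior; the arc $\gamma(B_k)$ lies in the closed annulus $\{z:2^k\le|z|\le 2^{k+1}\}$, and, since $|\gamma'(t)|=\sqrt{\alpha^2+\beta^2}\,|\gamma(t)|$, a direct computation gives
\[
\ell(\gamma(B_k))\le\int_{B_k}|\gamma'(t)|\,dt=\frac{\sqrt{\alpha^2+\beta^2}}{|\alpha|}\,2^k ,
\]
so the length of each band is comparable to the inner radius of its annulus. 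I would also record a one-sided chord--arc estimate valid on every band: for $s,t\in B_k$ with $t\ge s$, writing $\gamma(t)-\gamma(s)=e^{(\alpha+i\beta)s}\bigl(e^{(\alpha+i\beta)(t-s)}-1\bigr)$ and using $|e^{(\alpha+i\beta)s}|=|\gamma(s)|\ge 2^k$ together with $0\le t-s\le(\log 2)/|\alpha|$, one obtains
\[
|\gamma(t)-\gamma(s)|\ge c_1\,2^k\,|t-s| ,\qquad
c_1:=\min_{0\le u\le(\log 2)/|\alpha|}\frac{|e^{(\alpha+i\beta)u}-1|}{u}.
\]
The key point is $c_1>0$, and this is the only place where the hypothesis $\alpha\neq 0$ is used essentially: the function $u\mapsto|e^{(\alpha+i\beta)u}-1|/u$, extended by the value $|\alpha+i\beta|$ at $u=0$, is continuous on the compact interval $[0,(\log 2)/|\alpha|]$ and can vanish only where $e^{(\alpha+i\beta)u}=1$, i.e.\ where $\alpha u=0$, i.e.\ (as $\alpha\neq 0$) only at $u=0$, where its value $|\alpha+i\beta|$ is positive.

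With these two facts I would establish $\ell(A\cap\{z:|z-w|<r\})\le Cr$ by splitting according to the size of $|w|$ relative to $r$. If $|w|\le 4r$, then $\{z:|z-w|<r\}\subset\{z:|z|<5r\}$, so $A\cap\{z:|z-w|<r\}$ is covered by the bands $\gamma(B_k)$ with $2^k<5r$, and summing the geometric series of their lengths yields a bound at most $\frac{10\sqrt{\alpha^2+\beta^2}}{|\alpha|}\,r$. If $|w|>4r$, then $\{z:|z-w|<r\}$ lies in the open annulus $\{z:\tfrac34|w|<|z|<\tfrac54|w|\}$, whose radius ratio is less than $2$, hence it meets at most two consecutive dyadic annuli; consequently $A\cap\{z:|z-w|<r\}$ meets at most two of the bands $\gamma(B_k)$. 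On each such band, any two parameters $s,t$ with $\gamma(s),\gamma(t)\in\{z:|z-w|<r\}$ satisfy $|\gamma(t)-\gamma(s)|<2r$, so the chord--arc estimate confines them to a subinterval of $B_k$ of length at most $2r/(c_1 2^k)$; integrating $|\gamma'|\le\sqrt{\alpha^2+\beta^2}\,2^{k+1}$ over that subinterval bounds the contribution of the band by $\frac{4\sqrt{\alpha^2+\beta^2}}{c_1}\,r$, and hence the total by $\frac{8\sqrt{\alpha^2+\beta^2}}{c_1}\,r$. Taking $C$ to be the larger of the two resulting constants completes the argument.

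The main obstacle I anticipate is organizational rather than deep: arranging the dyadic bands so that both signs of $\alpha$ (the spiral running out to $\infty$ versus winding in to $0$) and both regimes for the disk $\{z:|z-w|<r\}$ (near and far from the origin) are handled by the same estimates, together with the verification that $c_1>0$. Once the bands are in place, only elementary geometric summation and integration remain.
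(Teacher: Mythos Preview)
Your argument is correct, but it takes a substantially different route from the paper. The paper exploits a closed-form identity specific to logarithmic spirals: since $|\gamma'(t)|=\sqrt{\alpha^2+\beta^2}\,|w_o|e^{\alpha t}$ and $|\gamma(t)|=|w_o|e^{\alpha t}$, one has
\[
\int_{t_1}^{t_2}|\gamma'(t)|\,dt=\frac{\sqrt{\alpha^2+\beta^2}}{|\alpha|}\,\bigl||\gamma(t_1)|-|\gamma(t_2)|\bigr|\le \frac{\sqrt{\alpha^2+\beta^2}}{|\alpha|}\,|\gamma(t_1)-\gamma(t_2)|,
\]
so setting $t_1=\inf\{t:\gamma(t)\in\Delta\}$, $t_2=\sup\{t:\gamma(t)\in\Delta\}$ for a disk $\Delta$ of radius $r$ immediately bounds $\ell(A\cap\Delta)$ by $\frac{2\sqrt{\alpha^2+\beta^2}}{|\alpha|}\,r$ (with a separate one-line treatment when $0\in\Delta$ and $\alpha<0$). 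In other words, the total arc length between any two points on the spiral is a fixed multiple of the difference of their moduli, and the reverse triangle inequality finishes it.

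Your dyadic-band plus chord--arc argument is the right machinery for a general curve whose arc length on scale $R$ is comparable to $R$, and it goes through cleanly here; the positivity of $c_1$ and the ``at most two bands'' count are both fine. The trade-off is that you work harder than necessary for this particular curve: the paper's identity collapses your geometric summation and your chord--arc constant into a single exact formula, and avoids the near/far case split altogether. On the other hand, your approach would survive perturbations of the spiral (any curve with $|\gamma(t)|$ monotone and $|\gamma'(t)|\asymp|\gamma(t)|$), whereas the paper's relies on the exact equality.
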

\begin{proof}
If $\alpha=0$ or $\beta=0$, the result is trivial; so we assume that
$\alpha\neq 0$ and $\beta\neq 0$. Let $w\in \C$ and $r>0$. Set $\Delta=\{z:|z-w|<r\}$. We assume that $\Delta\cap A\neq\varnothing$ and set
$$
t_1=\inf\{t\ge 0:\gamma(t)\in \Delta\},$$
$$
t_2=\sup\{t\ge 0:\gamma(t)\in \Delta\}.
$$
 By continuity, $\gamma(t_1),\gamma(t_2)\in\overline{\Delta}$.
Note that
\begin{equation}\label{spiralsp1}
|\gamma(t)|=|w_o|e^{\alpha t},\;\;\;\;\;\;|\gamma^\prime(t)|=\sqrt{\alpha^2+\beta^2}\;|w_o|e^{\alpha t},\;\;\;\;t\ge 0.
\end{equation}

Case 1: $0\notin \Delta$\\
Using (\ref{spiralsp1}), we obtain
\begin{eqnarray}\label{spiralsp2}
\ell(A\cap\Delta)&\leq & \int_{t_1}^{t_2}|\gamma^\prime(t)|dt=\frac{\sqrt{\alpha^2+\beta^2}}{|\alpha|}|w_o||e^{\alpha t_1}-e^{\alpha t_2}| \nonumber\\
&=&\frac{\sqrt{\alpha^2+\beta^2}}{|\alpha|} ||\gamma(t_1)|-|\gamma(t_2)||\leq 
\frac{\sqrt{\alpha^2+\beta^2}}{|\alpha|}|\gamma(t_1)-\gamma(t_2)| \\ &\leq & 
\frac{2\sqrt{\alpha^2+\beta^2}}{|\alpha|}\;r.
\nonumber
\end{eqnarray}

Case 2: $0\in\Delta,\;\alpha>0$\\
In this case, $A\subset \{z:|z|\ge|w_o|\}$ and the proof is identical to that in Case 1.

Case 3:  $0\in\Delta,\;\alpha<0$\\
In this case,  $A\subset \{z:|z|\le|w_o|\}$, $0\leq t_1<+\infty$ and $t_2=+\infty$. So
\begin{eqnarray}\label{spiralsp3}
\ell(A\cap\Delta)&\leq & \int_{t_1}^{+\infty}|\gamma^\prime(t)|dt=\frac{\sqrt{\alpha^2+\beta^2}}{|\alpha|}|w_o|e^{\alpha t_1} \nonumber\\
&=&\frac{\sqrt{\alpha^2+\beta^2}}{|\alpha|} |\gamma(t_1)|=
\frac{\sqrt{\alpha^2+\beta^2}}{|\alpha|}|\gamma(t_1)-0| \\ &\leq & 
\frac{2\sqrt{\alpha^2+\beta^2}}{|\alpha|}\;r.
\nonumber
\end{eqnarray}
We conclude that $A$ is Ahlfors regular.
\end{proof} 
 
\medskip

\begin{proof}[Proof of Theorem \ref{rectifiability}]
Let  $(\phi_t)$ be a semigroup with Koenigs function $h$.
Suppose first that $(\phi_t)$ is non-elliptic. Then, by (\ref{Koenigs non-elliptic}), the image under $h$ of any orbit of  $(\phi_t)$ (forward or backward) 
lies on a straight line. Hence, by the Hayman-Wu Theorem \ref{HWT}, every orbit is rectifiable.

Next, suppose that  $(\phi_t)$ is elliptic. Then, by (\ref{Koenigs elliptic}),  the image under $h$ of any orbit of  $(\phi_t)$ lies on a curve of the form (\ref{spiralse}); for forward orbits $\alpha< 0$, while for backward orbits $\alpha> 0$. It follows from Lemma \ref{spirals} that the trace of any orbit is Ahfors regular. Subsequently, by the Bishop-Jones Theorem \ref{BJT}, every orbit is rectifiable.
\end{proof}


\section{{\bf Lipschitz property of orbits in the unit disk}}

In this section, we will provide the proofs of  Theorems \ref{forward}, \ref{backward} and of several adjoint results. The theorems are stated for all semigroups, regardless of their type. However, elliptic and non-elliptic semigroups present certain dissimilarities in terms of their Koenigs function. Since our proofs are mostly based on this important tool, we are going to treat each type of semigroups separately. Hence, almost all the proofs contain two parts, one for elliptic semigroups and one for the non-elliptic ones. 

\bigskip

\subsection{Forward orbits}
We will first focus on the forward orbits of a semigroup.

\begin{proof}[Proof of Theorem \ref{forward}]
	Fix $z\in\D$ and consider $h$ to be the Koenigs function of $(\phi_t)$, $\Omega$ to be its Koenigs domain and $G$ its infinitesimal generator. In both types of semigroups, the proof hinges upon bounding the modulus of the infinitesimal generator $G$ on the forward orbit $\gamma_z$.

	\medskip
	
	Part A: Non-elliptic semigroups:
	
	Utilising Lemma \ref{pommerenkelemma}, we have for all $t\ge0$,
	\begin{eqnarray}\label{T1p1}
		|G(\gamma_z(t))|&=&\frac{1}{|h'(\gamma_z(t))|}
	\le\frac{1-|\gamma_z(t)|^2}{\delta_\Omega(h(\gamma_z(t)))}\\
		&\le&\frac{1}{\delta_\Omega(h(z)+t)}\le\frac{1}{\delta_\Omega(h(z))}=:c_z<+\infty. \nonumber
	\end{eqnarray}
	 Let $t\ge s\ge0$. Using (\ref{generator}) and (\ref{T1p1}), we obtain
	\begin{eqnarray}
		|\gamma_z(t)-\gamma_z(s)|&=&|\phi_t(z)-\phi_s(z)|
		=\left|\int\limits_{s}^{t}\frac{\partial\phi_u(z)}{\partial u}du\right| \nonumber\\
		&=&\left|\int\limits_{s}^{t}G(\phi_u(z))du\right|
		\le\int\limits_{s}^{t}|G(\gamma_z(u))|du
		\le c_z(t-s).
	\end{eqnarray}
	Therefore, $|\gamma_z(t)-\gamma_z(s)|\le c_z(t-s)$ and the forward orbit is Lipschitz.
	
		\medskip
	
	Part B: Elliptic semigroups: 
	
	Let $\tau\in\mathbb{D}$ be the Denjoy-Wolff point of $(\phi_t)$ and $\mu$ its spectral value. In this case, for the upper bound of the infinitesimal generator $G$ along the orbit $\gamma_z$,  we use (\ref{hG elliptic}) and (\ref{pommerenke}) to conclude that
	for all $t\ge0$,
	\begin{eqnarray}
		|G(\gamma_z(t))|&=&|\mu|\frac{|h(\gamma_z(t))|}{|h'(\gamma_z(t))|}
		\le|\mu|\frac{|e^{-\mu t}h(z)|(1-|\gamma_z(t)|^2)}{\delta_\Omega(h(\gamma_z(t)))}\nonumber \\
		&\le&\frac{|\mu h(z)|}{\textup{dist}(\gamma_z([0,+\infty)),\partial\Omega)}=:C_z<+\infty.
	\end{eqnarray}
	 Therefore, continuing exactly as in the preceding proof, we get that $\gamma_z$ is Lipschitz.
\end{proof}

\begin{rmk}\label{remark}
	As we mentioned in Section 2, for non-elliptic semigroups the quantity $\delta_\Omega(h(\gamma_z(t)))$ is always bounded below by $\delta_\Omega(h(z))>0$ for all $t\ge0$, for each and every $z\in\D$. Moreover, for non-elliptic semigroups, $|\gamma_z(t)|$ tends to 1, as $t\to+\infty$, since every orbit of a non-elliptic semigroup converges to the Denjoy-Wolff point, which lies on the unit circle. Therefore, following the procedure of the last proof, we understand that $\lim_{t\to+\infty}G(\gamma_z(t))=0$, for all $z\in\D$. So, if we restrict to non-elliptic semigroups, $\lim_{t\to+\infty}G(\phi_t(z))=0$ even for orbits converging to $\tau\in\partial\D$ tangentially. This observation provides a slightly stronger result than the already known angular limit $\angle\lim_{z\to\tau}G(z)=0$ (see \cite[Corollary 10.1.2]{book}).
\end{rmk}

To end this present subsection, we provide one more result correlating the Lipschitz condition with the shift of non-elliptic semigroups.

\begin{prop}\label{propshift}
	Let $(\phi_t)$ be a parabolic semigroup of positive hyperbolic step with Denjoy-Wolff point $\tau\in\partial\D$. Consider $C:\D\to\mathbb{H}$ be the Cayley transform with $C(z)=\frac{\tau+z}{\tau-z}$. The following are equivalent:
	\begin{enumerate}
		\item[\textup{(i)}] $(\phi_t)$ is of finite shift,
		\item[\textup{(ii)}] the curve $C\circ\gamma_z$ is Lipschitz for all $z\in\D$.
	\end{enumerate}
\end{prop}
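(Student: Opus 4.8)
The plan is to work entirely in the right half-plane $\Ha$ and exploit the description of finite shift given in Subsection 2.4: $(\phi_t)$ is of finite shift if and only if for every $z\in\D$ there is $M_z>0$ with $\textup{Re}\,C(\gamma_z(t))<M_z$ for all $t\ge0$. Write $\psi_t=C\circ\phi_t\circ C^{-1}$ for the conjugated semigroup in $\Ha$, and let $\eta_z=C\circ\gamma_z$ be the forward orbit in $\Ha$ starting at $C(z)$. Since conjugation by the conformal map $C$ transports the infinitesimal generator, $\eta_z$ is real-analytic and $\eta_z'(t)=\widetilde{G}(\eta_z(t))$, where $\widetilde{G}$ is the generator of $(\psi_t)$; equivalently, using the Koenigs function of the (parabolic, positive hyperbolic step) semigroup one has that the Koenigs domain $\Omega$ is contained in a horizontal half-plane but not in any horizontal strip. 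The Euclidean estimate driving everything will be, as in the proof of Theorem \ref{forward}, a bound for $|\eta_z'(t)|=|\widetilde{G}(\eta_z(t))|$ in terms of $\delta_\Omega$ along the orbit; what is new here is that we must track how this bound depends on $\textup{Re}\,\eta_z(t)$.

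For the implication (i)$\Rightarrow$(ii): assume finite shift, fix $z$ and let $M_z$ be as above, so $\eta_z([0,+\infty))$ lies in the vertical strip $\{0<\textup{Re}\,w<M_z\}$. The generator $\widetilde{G}$ of a semigroup in $\Ha$ with Denjoy-Wolff point $\infty$ (which is the image of $\tau$ under $C$) has the Berkson-Porta-type form, and in particular, combining \eqref{hG nonelliptic} transported to $\Ha$ with Lemma \ref{pommerenkelemma} and Lemma \ref{hyperbolicmetric-Euclideandistance}, $|\widetilde{G}(w)|$ is controlled by $\delta_{\Ha}(w)=\textup{Re}\,w$ up to the conformal factor coming from the Koenigs map; since $\textup{Re}\,\eta_z(t)$ stays bounded by $M_z$ and is bounded below (the orbit converges to $\infty$ only through the imaginary part shrinking the real part, so $\textup{Re}\,\eta_z(t)\to 0$, but the Koenigs-domain distance $\delta_\Omega(h(z)+t)$ is bounded below by $\delta_\Omega(h(z))$), one gets $|\eta_z'(t)|\le c_z$ for all $t\ge0$. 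Then exactly as in the proof of Theorem \ref{forward},
$$|\eta_z(t)-\eta_z(s)|=\left|\int_s^t \widetilde{G}(\eta_z(u))\,du\right|\le c_z|t-s|,$$
so $C\circ\gamma_z=\eta_z$ is Lipschitz.

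For the contrapositive of (ii)$\Rightarrow$(i): suppose $(\phi_t)$ is of infinite shift, so there is $z\in\D$ with $\textup{Re}\,\eta_z(t)\to+\infty$ as $t\to+\infty$ (the orbit meets every horodisk at $\tau$). I expect this is the main obstacle: one must show that this unbounded excursion in the real direction forces the parametrized speed $|\eta_z'(t)|$ to be unbounded, hence $\eta_z$ cannot be Lipschitz. The idea is to use the positive-hyperbolic-step hypothesis — which guarantees $\Omega$ is not contained in any horizontal strip, so $\delta_\Omega(h(z)+t)\to+\infty$ — together with the lower half of Lemma \ref{pommerenkelemma}: transported to $\Ha$, this says $|\widetilde{G}(\eta_z(t))|=1/|h'(\cdot)|\ge \tfrac14(1-|C^{-1}\eta_z(t)|^2)$-type quantity, which one rewrites as a comparison with $\delta_\Omega(h(z)+t)$ via $\delta_\Ha(w)\asymp$ that distance; since $\textup{Re}\,\eta_z(t)\to\infty$ and $\delta_\Omega(h(z)+t)\to\infty$ one obtains $\limsup_{t\to\infty}|\eta_z'(t)|=+\infty$. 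Finally, a curve with a real-analytic parametrization whose derivative is unbounded on $[0,+\infty)$ is not Lipschitz, because $|\eta_z'(t_0)|$ large forces $|\eta_z(t_0+\varepsilon)-\eta_z(t_0)|/\varepsilon$ close to $|\eta_z'(t_0)|$ for small $\varepsilon$. The delicate point to get right is the precise two-sided comparison between $\textup{Re}\,\eta_z(t)$, $\delta_\Omega(h(z)+t)$, and $|\eta_z'(t)|$, i.e.\ making sure the conformal distortion factors from passing through the Koenigs map do not conspire to keep the speed bounded; here the fact that the Koenigs domain is convex in the positive direction (hence one can use the sharper constant in Lemma \ref{distancelemma}) should suffice to push the estimate through.
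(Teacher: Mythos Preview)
Your overall strategy---compute $\eta_z'(t)$ and bound it via Lemma \ref{pommerenkelemma}---matches the paper's. The paper obtains directly that $|\eta_z'(t)|$ is comparable (up to absolute constants) to
\[
\frac{1-|\gamma_z(t)|^2}{|\tau-\gamma_z(t)|^2\,\delta_\Omega(h(z)+t)}=\frac{\textup{Re}\,\eta_z(t)}{\delta_\Omega(h(z)+t)},
\]
and both implications follow at once. Your (i)$\Rightarrow$(ii) is essentially correct, if hazily stated.

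Your (ii)$\Rightarrow$(i), however, has a genuine error. You write that positive hyperbolic step ``guarantees $\Omega$ is not contained in any horizontal strip, so $\delta_\Omega(h(z)+t)\to+\infty$'', and then claim $|\eta_z'(t)|\to+\infty$ because both $\textup{Re}\,\eta_z(t)$ and $\delta_\Omega(h(z)+t)$ diverge. Both steps are wrong. Positive hyperbolic step means $\Omega$ \emph{is} contained in a horizontal half-plane (you even said so in your opening paragraph), and this forces $\delta_\Omega(h(z)+t)$ to be \emph{bounded above}, not divergent. Moreover, two quantities both tending to infinity would tell you nothing about their ratio anyway. The correct argument is the opposite of what you wrote: $\delta_\Omega(h(z)+t)$ stays bounded while, under infinite shift, $\textup{Re}\,\eta_z(t)\to+\infty$, so the displayed ratio blows up. Once this is fixed there is no ``delicate point'' to worry about, and the appeal to Lemma \ref{distancelemma} (which in any case requires convexity, not mere convexity in the positive direction) is unnecessary.
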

\begin{proof}
	Let $h$ be the Koenigs function of $(\phi_t)$, $\Omega$ its Koenigs domain and $G$ its infinitesimal generator. Fix $z\in\D$. Set 
	$$
	\delta(t)=C(\phi_t(z))=C(\gamma_z(t))= \frac{\tau+\gamma_z(t)}{\tau-\gamma_z(t)}, \;\;\;t\ge0.
	$$ 
	Our first aim is to evaluate the derivative of $\delta$.  
	$$\delta'(t)=\frac{\gamma_z'(t)(\tau-\gamma_z(t))+\gamma_z'(t)(\tau+\gamma_z(t))}{(\tau-\gamma_z(t))^2}=\frac{2\tau\gamma_z'(t)}{(\tau-\gamma_z(t))^2}.$$
	But we know that 
	$$
	\gamma_z'(t)=\frac{\partial\phi_t(z)}{\partial t}=G(\gamma_z(t))=\frac{1}{h'(\gamma_z(t))},
	$$
	where $h'$ is the derivative of the Koenigs function with respect to $z$. Through Lemma \ref{pommerenkelemma}, we get
	$$\frac{1-|\gamma_z(t)|^2}{4\delta_\Omega(h(\gamma_z(t)))}\le\frac{1}{|h'(\gamma_z(t))|}\le\frac{1-|\gamma_z(t)|^2}{\delta_\Omega(h(\gamma_z(t)))}.$$
	As a result, combining everything, we arrive to the conclusion
	$$\frac{1-|\gamma_z(t)|^2}{2\delta_\Omega(h(z)+t)|\tau-\gamma_z(t)|^2}\le |\delta'(t)|\le\frac{2(1-|\gamma_z(t)|^2)}{\delta_\Omega(h(z)+t)|\tau-\gamma_z(t)|^2}.$$
	Now, suppose that $(\phi_t)$ is of finite shift. Then, by definition, there exists some $R_z>0$ such that for all $t\ge0$, $$
	\frac{|\tau-\gamma_z(t)|^2}{1-|\gamma_z(t)|^2}\ge R_z.
	$$
	Consequently, for all $t\ge0$,
	$$
	|\delta'(t)|\le\frac{2}{\delta_\Omega(h(z))R_z}<+\infty.
	$$
	Since the derivative is bounded, we infer that $\delta=C\circ\gamma_z$ is Lipschitz. 
	
	Conversely, suppose that $(\phi_t)$ is of infinite shift. Then, for every $R>0$, there exists $t_R\ge0$ such that for all $t\ge t_R$, $$
	\frac{|\tau-\gamma_z(t)|^2}{1-|\gamma_z(t)|^2}<R.
	$$ 
	Therefore, for every $R>0$, there exists $t_R\ge0$ so that for all $t\ge t_R$, 
	$$
	|\delta'(t)|\ge\frac{1}{2\delta_\Omega(h(z)+t)R}.
	$$
	But $(\phi_t)$ is parabolic of positive hyperbolic step, which implies that $\Omega$ is contained in some horizontal half-plane. Hence $\delta_\Omega(h(z)+t)$ is bounded and we immediately get $\limsup_{t\to+\infty}|\delta'(t)|=+\infty$. As a result, we deduce that $\delta=C\circ\gamma_z$ is not Lipschitz.
\end{proof}

\bigskip

\subsection{Backward orbits}
We proceed to the backward orbits of semigroups. We will first need an easy lemma correlating the Lipschitz condition for a backward orbit $\tilde{\gamma}_z$ with the boundedness of $|G(\tilde{\gamma}_z(t))|$, as $t\to T_z$. In the case of forward orbits, its analogue is trivial because of (\ref{i3}). 

\begin{lm}\label{generator-lemma}
	Let $(\phi_t)$ be a semigroup in $\D$ with infinitesimal generator $G$. Then, for $z\in\D$, the backward orbit $\tilde{\gamma}_z$ is Lipschitz if and only if $\limsup_{t\to T_z}|G(\tilde{\gamma}_z(t))|<+\infty$.
\end{lm}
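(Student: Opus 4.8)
The plan is to relate the backward orbit $\tilde\gamma_z$ to a reparametrization of a forward-type flow and then exploit the ODE it satisfies. The key observation is that if we set $w=\tilde\gamma_z(t_0)=\phi_{t_0}^{-1}(z)$ for some fixed $t_0\in[0,T_z)$, then for $s\in[0,t_0]$ we have $\phi_s(w)=\phi_s(\phi_{t_0}^{-1}(z))=\phi_{t_0-s}^{-1}(z)=\tilde\gamma_z(t_0-s)$; in other words, the backward orbit traversed in the reverse direction is literally a forward orbit of the semigroup. Hence $\tilde\gamma_z$ is differentiable on $(0,T_z)$ with
\begin{equation}\label{L47e1}
	\tilde\gamma_z'(t)=-G(\tilde\gamma_z(t)),\qquad 0<t<T_z,
\end{equation}
which one gets by differentiating $\phi_s(\tilde\gamma_z(t_0))=\tilde\gamma_z(t_0-s)$ in $s$ at $s=0$ and using \eqref{generator}, together with the continuity of $\tilde\gamma_z$ up to $t=0$ (by \eqref{i5}) and of $G$.

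Granting \eqref{L47e1}, the lemma becomes a routine consequence. First I would prove the direction ``bounded generator implies Lipschitz'': if $M:=\limsup_{t\to T_z}|G(\tilde\gamma_z(t))|<+\infty$, then $t\mapsto|G(\tilde\gamma_z(t))|$ is a continuous function on $[0,T_z)$ with finite upper limit at the endpoint, hence it is bounded on all of $[0,T_z)$ by some constant $C$; then for $0\le s\le t<T_z$,
\begin{equation}\label{L47e2}
	|\tilde\gamma_z(t)-\tilde\gamma_z(s)|=\left|\int_s^t\tilde\gamma_z'(u)\,du\right|\le\int_s^t|G(\tilde\gamma_z(u))|\,du\le C(t-s),
\end{equation}
so $\tilde\gamma_z$ is Lipschitz. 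Conversely, if $\tilde\gamma_z$ is Lipschitz with constant $C$, then for every $t\in(0,T_z)$, $|\tilde\gamma_z'(t)|=\lim_{h\to0}|\tilde\gamma_z(t+h)-\tilde\gamma_z(t)|/|h|\le C$, hence $|G(\tilde\gamma_z(t))|=|\tilde\gamma_z'(t)|\le C$ for all such $t$, and in particular $\limsup_{t\to T_z}|G(\tilde\gamma_z(t))|\le C<+\infty$.

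The only real content is establishing \eqref{L47e1}, i.e.\ that $\tilde\gamma_z$ actually solves the backward ODE; everything after that is the same Newton--Leibniz argument already used in the proof of Theorem \ref{forward}. The main obstacle is therefore a small technical one: justifying that $\tilde\gamma_z$ is $C^1$ on $(0,T_z)$ and that the identity $\phi_s\circ\phi_{t_0}^{-1}=\phi_{t_0-s}^{-1}$ holds (valid because each $\phi_u$ is univalent, so the relevant inverses compose correctly for $0\le s\le t_0<T_z$), and that one may pass from ``differentiable with $|\tilde\gamma_z'|\le C$'' to ``Lipschitz with constant $C$'' on the half-open interval including $t=0$ — this last point follows from continuity of $\tilde\gamma_z$ at $0$ together with the mean value inequality on $[\varepsilon,T_z)$ and letting $\varepsilon\to0^+$. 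Alternatively, one can avoid differentiating $\tilde\gamma_z$ directly and instead argue entirely via the forward orbits $s\mapsto\phi_s(\phi_{t_0}^{-1}(z))$, on which \eqref{i3} applies verbatim, and then take a supremum over $t_0\in[0,T_z)$; I expect this to be the cleanest route to write up.
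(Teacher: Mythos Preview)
Your approach is essentially the same as the paper's: both hinge on rewriting the backward orbit as a reversed forward orbit and exploiting the resulting integral identity $\tilde\gamma_z(t_2)-\tilde\gamma_z(t_1)=-\int_{t_1}^{t_2}G(\tilde\gamma_z(u))\,du$. Your converse (Lipschitz $\Rightarrow$ derivative bounded $\Rightarrow$ $|G|$ bounded along the orbit) is more direct than the paper's, which argues by contradiction via the real and imaginary parts of $G\circ\tilde\gamma_z$, but the underlying content is identical.
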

\begin{proof}
	Suppose first that the limsup is finite. Then there exists some $c=c(z)\in(0,+\infty)$ such that $|G(\tilde{\gamma}_z(t))|<c$, for all $t\in[0,T_z)$. Fix $0\le t_1\le t_2< T_z$. Pick an arbitraty $s\in(t_2,T_z)$ and set $s_i=s-t_i$, $i=1,2$. Then
	\begin{eqnarray*}
		|\tilde{\gamma}_z(t_2)-\tilde{\gamma}_z(t_1)|&=&|\tilde{\gamma}_z(s-s_2)-\tilde{\gamma}_z(s-s_1)|=|\phi_{s_2}(\tilde{\gamma}_z(s))-\phi_{s_1}(\tilde{\gamma}_z(s))|\\
		&=&\left|\int\limits_{s_1}^{s_2}\frac{\partial\phi_u(\tilde{\gamma}_z(s))}{\partial u}du\right|=\left|\int\limits_{s_1}^{s_2}G(\phi_u(\tilde{\gamma}_z(s)))du\right|\\
		&=&\left|\int\limits_{s_1}^{s_2}G(\tilde{\gamma}_z(s-u))du\right|=\left|\int\limits_{t_1}^{t_2}G(\tilde{\gamma}_z(t))dt\right|\\
		&\le&c(t_2-t_1),
	\end{eqnarray*}
	and thus $\tilde{\gamma}_z$ is Lipschitz.
	
	For the reverse implication, suppose that the limsup is infinite. So there exists a strictly increasing sequence $\{t_n\}\subset[0,T_z)$ with $\lim_{n\to+\infty}t_n=T_z$ and $\lim_{n\to+\infty}|G(\tilde{\gamma}_z(t_n))|=+\infty$. Without loss of generality, we may assume $\lim_{t\to T_z}\textup{Re}G(\tilde{\gamma}_z(t_n))=+\infty$ (in case $\lim_{t\to T_z}\textup{Re}G(\tilde{\gamma}_z(t_n))=-\infty$ or $\lim_{t\to T_z}\textup{Im}G(\tilde{\gamma}_z(t_n))=\pm\infty$, the proof remains almost identical). Aiming towards a contradiction, assume that $\tilde{\gamma}_z$ is Lipschitz and hence there exists some positive constant $C=C(z)$ with $$|\tilde{\gamma}_z(t_2)-\tilde{\gamma}_z(t_1)|<C(t_2-t_1), \quad\textup{for all }0\le t_1\le t_2 < T_z.$$ 
	Let $M>C+1$. Then, there is $N\in\N$ satisfying $\textup{Re}G(\tilde{\gamma}_z(t_n))>M$, for all $n\ge N$. By the continuity of $G\circ\tilde{\gamma}_z$, we can find $T>t_N$ so that $\textup{Re}G(\tilde{\gamma}_z(t))>M-1>C$, for all $t\in(t_N,T)$. Following a similar procedure as in the former case, we obtain
	\begin{eqnarray*}
		|\tilde{\gamma}_z(T)-\tilde{\gamma}_z(t_N)|^2&=&\left|\int\limits_{t_N}^{T}G(\tilde{\gamma}_z(t))dt\right|^2\\
		&\ge&\left(\int\limits_{t_N}^{T}\textup{Re}G(\tilde{\gamma}_z(t))dt\right)^2\\
		&\ge&(M-1)^2(T-t_N)^2\\
		&>&C^2(T-t_N)^2.
	\end{eqnarray*}
	But this violates the Lipschitz condition. Contradiction! Therefore, $\tilde{\gamma}_z$ is not Lipschitz.
\end{proof}

We may now provide the proof of Theorem \ref{backward}.

\begin{proof}[Proof of Theorem \ref{backward}]
	Fix $z\in\D$ and let $G$ be the infinitesimal generator of $(\phi_t)$.
	
	(a) Suppose that $(\phi_t)$ is non-elliptic. By the previous lemma, $\tilde{\gamma}_z$ is Lipschitz if and only if  $\limsup_{t\to T_z}|G(\tilde{\gamma}_z(t))|<+\infty$. So we will work with $G\circ\tilde{\gamma}_z$. By (\ref{hG nonelliptic}), we know that $G(\tilde{\gamma}_z(t))=1/h'(\tilde{\gamma}_z(t))$. Combining this with (\ref{hyperbolicinvariance}) and (\ref{hyperbolicmetric}) yields 
	\begin{equation}\label{back-1}
		|G(\tilde{\gamma}_z(t))|=\frac{\lambda_{\Omega}(h(\tilde{\gamma}_z(t)))}{\lambda_{\D}(\tilde{\gamma}_z(t))}=(1-|\tilde{\gamma}_z(t)|^2)\lambda_{\Omega}(h(z)-t),
	\end{equation}
	for all $t\in[0,T_z)$. Executing certain easy calculations, we have
	\begin{eqnarray}\label{back-2}
		\notag 1-|\tilde{\gamma}_z(t)|^2&\le&4\frac{1-|\tilde{\gamma}_z(t)|}{1+|\tilde{\gamma}_z(t)|}\\
		\notag &=&4e^{-2k_{\D}(0,\tilde{\gamma}_z(t))}\\
		\notag &\le&4e^{2k_{\D}(0,z)}e^{-2k_{\D}(z,\tilde{\gamma}_z(t))}\\
		&=&4\frac{1+|z|}{1-|z|}e^{-2k_{\Omega}(h(z),h(z)-t)},
	\end{eqnarray}
	where we made consecutive use of (\ref{hyperbolicdistance}), the triangle inequality and the conformal invariance of the hyperbolic distance. On the other hand,
	\begin{eqnarray}\label{back-3}
		\notag 1-|\tilde{\gamma}_z(t)|^2&\ge&\frac{1-|\tilde{\gamma}_z(t)|}{1+|\tilde{\gamma}_z(t)|}\\
		\notag &=&e^{-2k_{\D}(0,\tilde{\gamma}_z(t))}\\
		&\ge&\frac{1-|z|}{1+|z|}e^{-2k_{\Omega}(h(z),h(z)-t)},
	\end{eqnarray}
	where we followed a similar process as above. As a result, through (\ref{back-1}), (\ref{back-2}) and (\ref{back-3}) we obtain
	\begin{equation}\label{back-4}
		\frac{1-|z|}{1+|z|}\frac{\lambda_{\Omega}(h(z)-t)}{e^{2k_{\Omega}(h(z),h(z)-t)}}\le |G(\tilde{\gamma}_z(t))|\le 4\frac{1+|z|}{1-|z|}\frac{\lambda_{\Omega}(h(z)-t)}{e^{2k_{\Omega}(h(z),h(z)-t)}},
	\end{equation}
	for all $t\in[0,T_z)$. Therefore, $\limsup_{t\to T_z}|G(\tilde{\gamma}_z(t))|$ is finite if and only if  $$\limsup_{t\to T_z}\frac{\lambda_{\Omega}(h(z)-t)}{e^{2k_{\Omega}(h(z),h(z)-t)}}<+\infty$$  which leads to the desired result.
	
	(b) Now suppose that $(\phi_t)$ is elliptic with spectral value $\mu\in\C$, $\text{Re}\mu>0$. The proof requires similar steps. Again, for a fixed $z\in\D$, the backward orbit $\tilde{\gamma}_z$ is Lipschitz if and only if $\limsup_{t\to T_z}G(\tilde{\gamma}_z(t))<+\infty$. However, this time by (\ref{hG elliptic}), $G(\tilde{\gamma}_z(t))=-\mu h(\tilde{\gamma}_z(t))/h'(\tilde{\gamma}_z(t))$, for all $t\in[0,T_z)$. By the definition of the Koenigs function and (\ref{hyperbolicmetric}), this results in
	\begin{equation*}
		|G(\tilde{\gamma}_z(t))|=|\mu||e^{\mu t}h(z)|\frac{\lambda_{\Omega}(e^{\mu t}h(z))}{\lambda_{\D}(\tilde{\gamma}_z(t))}=|\mu h(z)|e^{\text{Re}\mu t}(1-|\tilde{\gamma}_z(t)|^2)\lambda_{\Omega}(e^{\mu t}h(z)).
	\end{equation*}
	Continuing exactly as in the non-elliptic case, we are led to the desired result.
\end{proof}

Lemma \ref{generator-lemma} gives a characterization of the Lipschitz condition for backward orbits. Other than just yielding a second characterization, the importance of Theorem \ref{backward} lies on the fact that is more easily checked through intuition and geometric consideration.  Via the last theorem, we are also able to prove certain helpful corollaries. Firstly, we will show that regularity is a sufficient condition for a backward orbit to be Lipschitz.

\begin{proof}[Proof of Corollary \ref{regular}] Fix $z\in\D$ and let $h$ be the Koenigs function of $(\phi_t)$ with $\Omega=h(\D)$. The regularity of the backward orbit dictates that $T_z=+\infty$. There are three distinct cases for the kind of $\tilde{\gamma}_z$ as evidenced by Proposition \ref{backward-convergence}. Either $(\phi_t)$ is elliptic and $\tilde{\gamma}_z(t)$ converges non-tangentially to a point on the unit circle (first kind), or $(\phi_t)$ is non-elliptic and $\tilde{\gamma}_z$ converges non-tangentially to a point on the unit circle other than the Denjoy-Wolff point (second kind), or $(\phi_t)$ is non-elliptic and $\tilde{\gamma}_z$ converges tangentially to the Denjoy-Wolff point of the semigroup (third kind).

We will first deal with the first two kinds collectively. Suppose that $\lim_{t\to +\infty}\tilde{\gamma}_z(t)=\sigma\in\partial\D$. By \cite[Proposition 12.2.4, Theorem 12.2.5]{book}, we know that $\angle\lim_{z\to\sigma}G(z)=0$. But $\tilde{\gamma}_z$ converges to $\sigma$ non-tangentially. Thus $\lim_{t\to+\infty}G(\tilde{\gamma}_z(t))=0$. At once, Lemma \ref{generator-lemma} yields that $\tilde{\gamma}_z$ is Lipschitz.

Finally, we must deal with the third kind. When passing to $\Omega$ through the Koenigs function, this signifies that $h\circ\tilde{\gamma}_z([0,+\infty))$ is contained in a maximal horizontal half-plane. The maximality of the half-plane along with the convexity of $\Omega$ in the positive direction lead to $\lim_{t\to+\infty}\delta_\Omega(h(z)-t)=:\delta_z\in(0,+\infty)$. By Theorem \ref{forward}, we are interested in the boundedness of the quantity $\lambda_{\Omega}(h(z)-t)/e^{2k_\Omega(h(z),h(z)-t)}$, as $t\to+\infty$. Through Lemma \ref{hyperbolicmetric-Euclideandistance}, we have
	\begin{equation}\label{regular1}
		\lambda_\Omega(h(z)-t)\le\frac{1}{\delta_\Omega(h(z)-t)},
	\end{equation}
	for all $t\ge0$. On the other hand, by Lemma \ref{distancelemma}, we have
	\begin{eqnarray}\label{regular2}
	\notag	k_\Omega(h(z),h(z)-t)&\ge&\frac{1}{4}\log\left(1+\frac{|h(z)-(h(z)-t)|}{\min\left\{\delta_\Omega(h(z)),\delta_\Omega(h(z)-t)\right\}}\right)\\
		&\ge&\frac{1}{4}\log\frac{t}{\delta_\Omega(h(z)-t)},
	\end{eqnarray}
	where in the last inequality we have used the fact that $\delta_\Omega(h(z)-t)$ is a decreasing quantity of $t\ge0$ due to the convexity of $\Omega$ in the positive direction. Combining relations (\ref{regular1}) and (\ref{regular2}), we obtain
	\begin{eqnarray*}
		\frac{\lambda_\Omega(h(z)-t)}{e^{2k_\Omega(h(z),h(z)-t)}}&\le&\frac{1}{\delta_\Omega(h(z)-t)}\frac{\sqrt{\delta_\Omega(h(z)-t)}}{\sqrt{t}}\\
		&=&\frac{1}{\sqrt{t\delta_\Omega(h(z)-t)}}.
	\end{eqnarray*}
	But the limit of $\delta_\Omega(h(z)-t)$, as $t\to+\infty$, exists and is greater than $0$. Hence $$\lim_{t\to+\infty}\frac{\lambda_\Omega(h(z)-t)}{e^{2k_\Omega(h(z),h(z)-t)}}=0$$ and $\tilde{\gamma}_z$ is Lipschitz by Theorem \ref{backward}.	
\end{proof}

\begin{rmk}
	In the previous corollary, the third kind of regular backward orbits required a more delicate approach due to the tangential convergence to the Denjoy-Wolff point. To prove the desired result, we used geometric techniques, both in hyperbolic and Euclidean terms. We note here that an almost identical proof works for the second kind, using a horizontal strip instead of a horizontal half-plane. Finally, similar arguments, albeit with greater difficulty and more tedious calculations, work for the first kind using a maximal spirallike sector in $\Omega$ and estimating the Euclidean distance of $\tilde{\gamma}_z$ from the boundary of the spirallike sector.
\end{rmk}

We now move on to Corollary \ref{convex} which gives a practical way of finding backward orbits that are Lipschitz, even if they are not regular. Indeed, if we construct a convex simply connected domain that is also convex in the positive direction (respectively $\mu$-spirallike), then through a Riemann mapping we may construct the associated non-elliptic (respectively elliptic with spectral value $\mu$) semigroup in $\D$ and all its backward orbits will be Lipschitz. Of course, in the elliptic case, the $\mu$-spirallike domain can only be convex if $\mu>0$ (i.e. its imaginary part is $0$).
\begin{proof}[Proof of Corollary \ref{convex}]
	Let $h$ be the Koenigs function of $(\phi_t)$ and so $\Omega=h(\D)$. Fix $z\in\D$ and consider its backward orbit $\tilde{\gamma}_z$. Once again, we will exploit Theorem \ref{backward} to produce the desired result.
	
	\medskip
	
	Part A: Non-elliptic semigroups:
	
	Following a similar procedure as in the previous corollary, using Lemma \ref{hyperbolicmetric-Euclideandistance}, we get $\lambda_{\Omega}(h(z)-t)\le\frac{1}{\delta_\Omega(h(z)-t)}$, for all $t\ge0$. On the other side, using  the convexity of $\Omega$ and Lemma \ref{distancelemma}, we get 
	$$k_\Omega(h(z),h(z)-t)\ge\frac{1}{2}\log\frac{t}{\delta_\Omega(h(z)-t)}.$$
	Combining we get
	$$\frac{\lambda_{\Omega}(h(z)-t)}{e^{2k_\Omega(h(z),h(z)-t)}}\le\dfrac{\frac{1}{\delta_\Omega(h(z)-t)}}{\frac{t}{\delta_\Omega(h(z)-t)}}=\frac{1}{t}.$$
	As a result, $$\limsup_{t\to T_z}\frac{\lambda_{\Omega}(h(z)-t)}{e^{2k_\Omega(h(z),h(z)-t)}}\le\frac{1}{T_z}\in[0,+\infty),$$ which means that $\tilde{\gamma}_z$ is Lipschitz by Theorem \ref{backward}.
	
	\medskip
	Part B: Elliptic semigroups:
	
	The proof consists of analogue arguments. Once again $\lambda_{\Omega}(e^{\mu t}h(z))\le\frac{1}{\delta_\Omega(e^{\mu t}h(z))}$. Likewise, using Lemma \ref{distancelemma} in the convex case,
	$$k_\Omega(h(z),e^{\mu t}h(z))\ge\frac{1}{2}\log\frac{|e^{\mu t}-1||h(z)|}{\delta_\Omega(e^{\mu t}h(z))}.$$
	Combining, we obtain
	$$\frac{e^{\textup{Re}\mu t}\lambda_\Omega(e^{\mu t}h(z))}{e^{2k_\Omega(h(z),e^{\mu t}h(z))}}\le\frac{\frac{e^{\textup{Re}\mu t}}{\delta_\Omega(e^{\mu t}h(z))}}{\frac{|e^{\mu t}-1||h(z)|}{\delta_\Omega(e^{\mu t}h(z))}}\le\frac{e^{\textup{Re}\mu t}}{(e^{\textup{Re}\mu t}-1)|h(z)|}.$$
	Evidently, $$\limsup_{t\to T_z}\frac{e^{\textup{Re}\mu t}\lambda_\Omega(e^{\mu t}h(z))}{e^{2k_\Omega(h(z),e^{\mu t}h(z))}}\in[1/h(z),+\infty),$$ regardless of if $T_z$ is finite or infinite. In view of Theorem \ref{backward}, we get the desired result.
\end{proof}

\begin{rmk}\label{bi-Lipschitz}
	A natural question is whether an orbit, forward or backward, of a semigroup can be bi-Lipschitz. For this to be true, we need the modulus of the infinitesimal generator to also be bounded from below. By Remark \ref{remark}, we already know that for any forward orbit $\gamma_z$, we have $\lim_{t\to+\infty}G(\gamma_z(t))=0$. So following similar methods as before, there can be no $c>0$ such that $|\gamma_z(t_2)-\gamma_z(t_1)|\ge c(t_2-t_1)$, for all $t_2\ge t_1\ge0$. Hence, no forward orbit can be bi-Lipschitz. A similar argument proves that regular backward orbits are not bi-Lipschitz either. On the other hand, if $\tilde{\gamma}_z$ is not regular (notice that not regular is not the same as non-regular), then it is entirely possible that $T_z<+\infty$ and $\lim_{t\to T_z}|G(\tilde{\gamma}_z(t))|>0$. But we know that $G$ is non-vanishing in the unit disk and therefore there exist certain backward orbits that are indeed bi-Lipschitz.
\end{rmk}

\medskip
In order for Theorem \ref{backward} to be of value, we need to prove that both  cases described in the theorem can actually occur for non-regular backward orbits. More specifically, we need to certify that there exist non-regular backward orbits which are Lipschitz and others that are not. So, we will give certain examples.

\begin{example}
	We start with an example where Theorem \ref{backward} can be used to show that a backward orbit is not Lipschitz. The example is inspired by \cite[Example 12.4.3]{book}. For $x,y\in\R$, set $L[x,y]:=\{w\in\C:\textup{Re}w\le x,\;\textup{Im}w=y\}$, a horizontal half-line stretching to infinity in the negative direction. Denote
	$$\Omega:=\{w\in\C:|\textup{Im}w|<2\}\setminus\bigcup\limits_{n=1}^{+\infty}\left(L\left[-2^n,\frac{1}{n}\right]\cup L\left[-2^n,-\frac{1}{n}\right]\right).$$
	\begin{figure}
		\centering
		\includegraphics[scale=0.45]{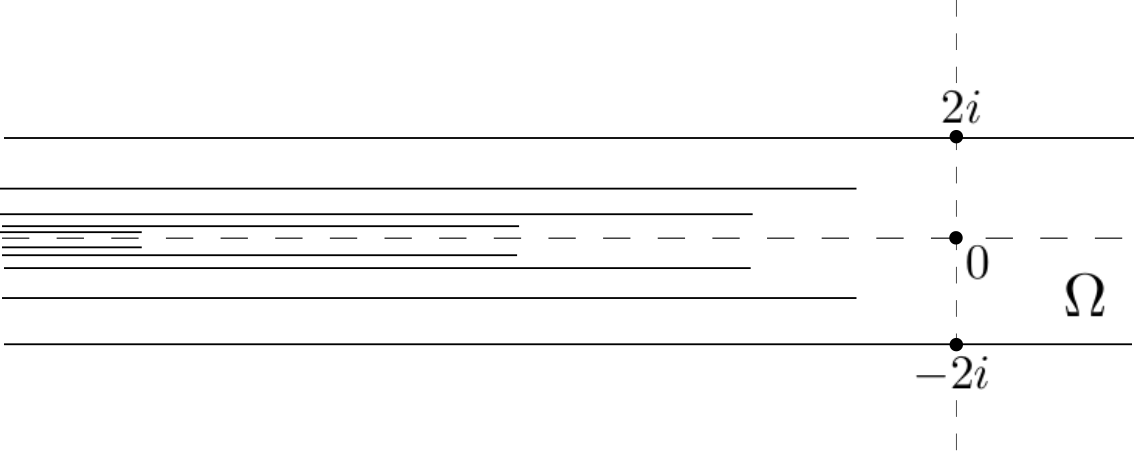}
		\caption{The Koenigs domain in Example 1}
		\label{fig:example1}
	\end{figure}
	It is readily verified that $\Omega$ is a convex in the positive direction simply connected domain (see Figure \ref{fig:example1}). Then, given a Riemann mapping $h:\D\to\Omega$ fixing the origin, the relation $\phi_t(z)=h^{-1}(h(z)+t)$, $z\in\D$, $t\ge0$, gives birth to a non-elliptic semigroup $(\phi_t)$ with Koenigs function $h$ and Koenigs domain $\Omega$. Clearly $T_0=+\infty$ and $h(\tilde{\gamma}_0(t))=-t$, for all $t\ge0$. In addition, $\tilde{\gamma}_0$ is non-regular since there is no horizontal strip or horizontal half-plane inside $\Omega$ containing $h(\tilde{\gamma}_0([0,+\infty)))$. Notice that $\delta_\Omega(h(0)-t)=\delta_\Omega(-t)\le\frac{1}{\lfloor t\rfloor}$, where $\lfloor.\rfloor$ denotes the floor function. Next, Lemma \ref{hyperbolicmetric-Euclideandistance} leads to 
	\begin{equation}\label{ex1.1}
		\lambda_{\Omega}(h(0)-t)=\lambda_\Omega(-t)\ge\frac{\lfloor t\rfloor}{4}.
	\end{equation}
	To continue with, we are going to estimate the corresponding hyperbolic distance $k_\Omega(h(0),h(0)-t)=k_\Omega(0,-t)$. Given $t\ge1$, the horizontal half-strip $\Sigma_t=\{w\in\C:\textup{Re}w>-2^{\lfloor t\rfloor},\; |\textup{Im}w|<1/\lfloor t\rfloor\}$ is contained inside $\Omega$, while also containing the rectilinear segment $[-t,0]$. By the domain monotonicity property of the hyperbolic distance, we get $k_\Omega(0,-t)\le k_{\Sigma_t}(0,-t)$, so we will work with the latter quantity. The conformal mapping 
	$$g(w)=\sin\left(\frac{i\pi\lfloor t \rfloor}{2}(w+2^{\lfloor t \rfloor})\right)$$ maps $\Sigma_t$ conformally onto the upper half-plane $\mathbb{H}$. By the conformal invariance of the hyperbolic distance, we have
	\begin{eqnarray}\label{ex1.2}
	\notag	k_\Omega(0,-t)&\le&k_{\Sigma_t}(0,-t)\\
	\notag	&=&k_{\mathbb{H}}\left(\sin\left(\frac{i\pi\lfloor t\rfloor2^{\lfloor t \rfloor}}{2}\right),\sin\left(\frac{i\pi\lfloor t\rfloor(2^{\lfloor t\rfloor}-t)}{2}\right)\right)\\
		&=&k_{\mathbb{H}}\left(i\sinh\left(\frac{\pi\lfloor t\rfloor2^{\lfloor t \rfloor}}{2}\right),i\sinh\left(\frac{\pi\lfloor t\rfloor(2^{\lfloor t\rfloor}-t)}{2}\right)\right).
	\end{eqnarray}
	Since lying on an axis of symmetry,  the segment $[-t,0]$ is a geodesic arc of $\Sigma_t$. But conformal mappings map geodesics to geodesics. Therefore, $g([0,-t])$ is still a geodesic arc of the upper half-plane. Furthermore, the hyperbolic density of $\mathbb{H}$ is given by $\lambda_{\mathbb{H}}(z)=\frac{1}{2\textup{Im}z}$. Setting
	$$
	a_t=\sinh\left(\frac{\pi\lfloor t\rfloor2^{\lfloor t \rfloor}}{2}\right)\;\;\;\; \hbox{and}\;\;\;\;b_t=\sinh\left(\frac{\pi\lfloor t\rfloor(2^{\lfloor t\rfloor}-t)}{2}\right),
	$$
	we infer that
	\begin{eqnarray*}
		k_{\mathbb{H}}\left(i\sinh\left(\frac{\pi\lfloor t\rfloor2^{\lfloor t \rfloor}}{2}\right),i\sinh\left(\frac{\pi\lfloor t\rfloor(2^{\lfloor t\rfloor}-t)}{2}\right)\right)&=&\left|\int\limits_{a_t}^{b_t}\frac{ds}{2s}\right|\\
		&=&\frac{1}{2}\log\frac{2^{\lfloor t\rfloor}}{2^{\lfloor t\rfloor}-t}.
	\end{eqnarray*}
	which through (\ref{ex1.2}) yields $k_\Omega(0,-t)\le\frac{1}{2}\log\frac{2^{\lfloor t\rfloor}}{2^{\lfloor t\rfloor}-t}$. Combining with (\ref{ex1.1}), we find
	$$\limsup\limits_{t\to+\infty}\frac{\lambda_\Omega(-t)}{e^{2k_\Omega(0,-t)}}\ge\limsup\limits_{t\to+\infty}\frac{\lfloor t\rfloor(2^{\lfloor t\rfloor}-t)}{4\cdot2^{\lfloor t\rfloor}}=+\infty.$$
	In view of Theorem \ref{backward}, we see that $\tilde{\gamma}_0$ is not Lipschitz.
\end{example}

Next, we want an example of a Lipschitz non-regular backward orbit. Towards this goal we will prove one more proposition that stems from Theorem \ref{backward}, but applies only to non-regular backward orbits of non-elliptic semigroups.

\begin{prop}\label{Lipschitz-Euclidean distance}
	Let $(\phi_t)$ be a non-elliptic semigroup in $\D$ with Koenigs function $h$ and Koenigs domain $\Omega$. Let $z\in\D$ and suppose that the backward orbit $\tilde{\gamma}_z$ is non-regular (and hence $T_z=+\infty$). If $$\liminf_{t\to+\infty}(t\cdot\delta_\Omega(h(z)-t))>0,$$ then $\tilde{\gamma}_z$ is Lipschitz.
\end{prop}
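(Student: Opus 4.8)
The plan is to reduce everything to the characterization already established in Theorem~\ref{backward}(a) and then estimate the relevant ratio by the Euclidean quantity $t\cdot\delta_\Omega(h(z)-t)$ using the standard comparison lemmas from Section~2. Since $\tilde\gamma_z$ is non-regular we have $T_z=+\infty$, so by Theorem~\ref{backward}(a) it suffices to prove that
\[
	\limsup_{t\to+\infty}\frac{\lambda_{\Omega}(h(z)-t)}{e^{2k_{\Omega}(h(z),h(z)-t)}}<+\infty .
\]

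First I would bound the numerator: by Lemma~\ref{hyperbolicmetric-Euclideandistance}, $\lambda_\Omega(h(z)-t)\le 1/\delta_\Omega(h(z)-t)$ for all $t\ge0$. Next I would bound the denominator from below using Lemma~\ref{distancelemma} with the two points $h(z)$ and $h(z)-t$, whose Euclidean distance is exactly $t$. Here the convexity of $\Omega$ in the positive direction is used: as recalled in Subsection~\ref{2.3}, $\delta_\Omega(h(z)-t)$ is decreasing in $t$, so $\min\{\delta_\Omega(h(z)),\delta_\Omega(h(z)-t)\}=\delta_\Omega(h(z)-t)$, and therefore
\[
	k_\Omega(h(z),h(z)-t)\ \ge\ \tfrac14\log\!\Bigl(1+\tfrac{t}{\delta_\Omega(h(z)-t)}\Bigr)\ \ge\ \tfrac14\log\tfrac{t}{\delta_\Omega(h(z)-t)},
\]
hence $e^{2k_\Omega(h(z),h(z)-t)}\ge\bigl(t/\delta_\Omega(h(z)-t)\bigr)^{1/2}$. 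Combining the two estimates gives
\[
	\frac{\lambda_{\Omega}(h(z)-t)}{e^{2k_{\Omega}(h(z),h(z)-t)}}\ \le\ \frac{1}{\sqrt{t\cdot\delta_\Omega(h(z)-t)}}
\]
for all sufficiently large $t$.

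Finally, the hypothesis $\liminf_{t\to+\infty}\bigl(t\cdot\delta_\Omega(h(z)-t)\bigr)>0$ furnishes constants $c>0$ and $t_0>0$ with $t\cdot\delta_\Omega(h(z)-t)\ge c$ for $t\ge t_0$, so the ratio is bounded by $1/\sqrt{c}$ there; on the compact interval $[0,t_0]$ the ratio is a continuous function with $h(z)-t\in\Omega$ (so $\lambda_\Omega$ finite) and $e^{2k_\Omega}\ge1$, hence bounded. This yields the required finite $\limsup$ and, via Theorem~\ref{backward}(a), that $\tilde\gamma_z$ is Lipschitz. There is no serious obstacle here; the only points requiring a little care are invoking the monotonicity of $\delta_\Omega(h(z)-t)$ to dispose of the $\min$ in Lemma~\ref{distancelemma}, and noting that non-regularity is exactly what guarantees $T_z=+\infty$ so that Theorem~\ref{backward}(a) applies with the limit taken as $t\to+\infty$.
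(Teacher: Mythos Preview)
Your proof is correct and follows essentially the same route as the paper: both bound $\lambda_\Omega$ via Lemma~\ref{hyperbolicmetric-Euclideandistance}, bound $k_\Omega$ from below via Lemma~\ref{distancelemma} (using the monotonicity of $\delta_\Omega(h(z)-t)$ to identify the minimum), combine to get the $1/\sqrt{t\cdot\delta_\Omega(h(z)-t)}$ bound, and invoke Theorem~\ref{backward}(a). The only cosmetic difference is that you add the observation about boundedness on $[0,t_0]$, which the paper leaves implicit.
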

\begin{proof}
	Relying on similar arguments as in our previous results, we may see that through Lemma \ref{hyperbolicmetric-Euclideandistance}
	$$\lambda_\Omega(h(z)-t)\le\frac{1}{\delta_\Omega(h(z)-t)},$$
	while through Lemma \ref{distancelemma} we get
	\begin{eqnarray*}
	k_\Omega(h(z),h(z)-t)&\ge&\frac{1}{4}\log\left(1+\frac{t}{\min\{\delta_\Omega(h(z)),\delta_\Omega(h(z)-t)\}}\right)\\
	&\ge&\frac{1}{4}\log\frac{t}{\delta_\Omega(h(z)-t)},
	\end{eqnarray*}
	for all $t\ge0$. Blending the two relations, we obtain
	\begin{eqnarray*}
		\frac{\lambda_{\Omega}(h(z)-t)}{e^{2k_\Omega(h(z),h(z)-t)}}&\le&\frac{\frac{1}{\delta_\Omega(h(z)-t)}}{\frac{\sqrt{t}}{\sqrt{\delta_\Omega(h(z)-t)}}}\\
		&=&\frac{1}{\sqrt{t\cdot\delta_\Omega(h(z)-t)}},
	\end{eqnarray*}
	for all $t\ge0$. Therefore, our hypothesis implies $$\limsup_{t\to+\infty}\frac{\lambda_{\Omega}(h(z)-t)}{e^{2k_\Omega(h(z),h(z)-t)}}<+\infty.$$ Finally, Theorem \ref{backward} yields the Lipschitz condition for $\tilde{\gamma}_z$.
\end{proof}

This last proposition is actually very practical since it allows us to easily construct non-elliptic semigroups (or rather their associated Koenigs domains) having a non-regular Lipschitz backward orbit. Its advantage compared to Theorem \ref{backward} is that it is stated in Euclidean terms. Nevertheless, it does not provide a characterization. We will demonstrate its practicality via the next examples.

\begin{example}
	Denote by $\mathbb{H}$ the right half-plane and consider 
	$$\Omega=\overline{\mathbb{H}-1}\cup\left\{x+iy:x<-1, |y|<\frac{1}{\log|x|}\right\}.$$
	Clearly $\Omega$ is a simply connected and convex in the positive direction domain. Let $h:\mathbb{D}\to\Omega$ be a Riemann mapping fixing the origin. Then, as in our previous example, we may construct the associated non-elliptic semigroup $(\phi_t)$ through the formula $\phi_t(z)=h^{-1}(h(z)+t)$, $z\in\D$, $t\ge0$. The whole half-line $\{-t:t\ge0\}$ is contained inside $\Omega$, while there exists no horizontal strip contained inside $\Omega$ and containing this half-line. As a result, the curve $\tilde{\gamma}_0(t)=h^{-1}(-t)$ is a non-regular backward orbit for $(\phi_t)$ converging to some super-repelling fixed point of the semigroup. It is easy to see that $$
	\liminf_{t\to+\infty}(t\cdot\delta_\Omega(h(\tilde{\gamma}_0(t))))=\liminf_{t\to+\infty}(t\cdot\delta_\Omega(-t))>0.
	$$ 
	Therefore, Proposition \ref{Lipschitz-Euclidean distance} implies that $\tilde{\gamma}_0$ is Lipschitz.
\end{example}

\begin{example}
Following a thought process  similar to the previous example, consider
$$\Omega=\overline{\mathbb{H}-1}\cup\left\{x+iy:x<-1, -\frac{1}{\log|x|}-1<y<\frac{1}{\log|x|}\right\}.$$
As before, there exists a non-elliptic semigroup $(\phi_t)$ with Koenigs function $h$ such that $h(\D)=\Omega$. Clearly $\Omega$ contains the horizontal strip $S=\{z:-1<\textup{Im}z<0\}$. Therefore the backward orbit $\tilde{\gamma}_0$ with $\tilde{\gamma}_0(t)=h^{-1}(h(0)-t)=h^{-1}(-t)$ lying on $\partial S$ converges to a repelling point of $(\phi_t)$ and is non-regular. As previously, $\liminf_{t\to+\infty}(t\cdot \delta_\Omega(h(\tilde{\gamma}_0(t))))>0$ and thus $\tilde{\gamma}_0$ is Lipschitz.
\end{example}

Finally, combining all our preceding work, we may prove a relevant result for full orbits of semigroups of holomorphic functions.

\begin{cor}
	Let $(\phi_t)$ be a semigroup in $\D$ and $z\in\D$. The following are equivalent:
	\begin{enumerate}
		\item[\textup{(i)}] The full orbit $\hat{\gamma}_z:(-T_z,+\infty):\D$ is Lipschitz.
		\item[\textup{(ii)}] The backward orbit $\tilde{\gamma}_z$ is Lipschitz.
	\end{enumerate}
\end{cor}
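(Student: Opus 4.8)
The plan is to reduce the statement to Theorem \ref{forward} together with the elementary observation that the restriction of $\hat{\gamma}_z$ to $(-T_z,0]$ coincides, after the change of variable $t\mapsto -t$, with the backward orbit $\tilde{\gamma}_z$, while its restriction to $[0,+\infty)$ is exactly the forward orbit $\gamma_z$, which is automatically Lipschitz.

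First I would dispose of the implication (i)$\Rightarrow$(ii), which is immediate: if $\hat{\gamma}_z$ is Lipschitz with constant $C$, then for all $0\le t_1\le t_2<T_z$ we have $|\tilde{\gamma}_z(t_2)-\tilde{\gamma}_z(t_1)|=|\hat{\gamma}_z(-t_2)-\hat{\gamma}_z(-t_1)|\le C(t_2-t_1)$, so $\tilde{\gamma}_z$ is Lipschitz.

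For (ii)$\Rightarrow$(i) I would argue as follows. Assume $\tilde{\gamma}_z$ is Lipschitz with constant $C_1$; by Theorem \ref{forward} the forward orbit $\gamma_z$ is Lipschitz with some constant $C_2$, and I set $C:=\max\{C_1,C_2\}$. Fix $s\le t$ in $(-T_z,+\infty)$. If $0\le s$, then $|\hat{\gamma}_z(t)-\hat{\gamma}_z(s)|=|\gamma_z(t)-\gamma_z(s)|\le C_2(t-s)\le C(t-s)$; if $t\le 0$, then after the substitution the same estimate follows from the Lipschitz property of $\tilde{\gamma}_z$. In the only remaining case $s<0<t$, I would split at the common point $\gamma_z(0)=\tilde{\gamma}_z(0)=z$ and use the triangle inequality:
$$|\hat{\gamma}_z(t)-\hat{\gamma}_z(s)|\le|\gamma_z(t)-\gamma_z(0)|+|\tilde{\gamma}_z(0)-\tilde{\gamma}_z(-s)|\le C_2\,t+C_1(-s)\le C(t-s),$$
where the last inequality uses $t>0>s$. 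Hence $\hat{\gamma}_z$ is Lipschitz with constant $C$.

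There is no serious obstacle here: the argument is routine once one notices that the forward half of the full orbit is Lipschitz for free by Theorem \ref{forward}, so that only the backward half carries any content. The mildest point of care is the mixed case $s<0<t$, handled by passing through the base point $z=\gamma_z(0)=\tilde{\gamma}_z(0)$; this also explains why the Lipschitz constant of $\hat{\gamma}_z$ may be taken to be the maximum of those of $\gamma_z$ and $\tilde{\gamma}_z$.
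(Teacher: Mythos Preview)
Your proof is correct and follows essentially the same approach as the paper: both directions are handled identically, with (i)$\Rightarrow$(ii) being trivial and (ii)$\Rightarrow$(i) obtained by invoking Theorem \ref{forward} for the forward half, then splitting into the three cases $s,t\ge 0$, $s,t\le 0$, and $s<0<t$, the last one via the triangle inequality through $z$ with constant $\max\{C_1,C_2\}$.
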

\begin{proof}
	Recall that by definition
	$$\hat{\gamma}_z(t)=\begin{cases}
		\tilde{\gamma}_z(-t), \quad\text{when }t\in(-T_z,0], \\
		\gamma_z(t), \quad\text{when }t\in[0,+\infty).
	\end{cases}$$
Trivially (i) implies (ii). If (ii) holds, there exists some positive constant $c_1$ such that
$$|\hat{\gamma}_z(t)-\hat{\gamma}_z(s)|=|\tilde{\gamma}_z(-t)-\tilde{\gamma}_z(-s))|\le c_1 |t-s|, \quad\text{for all }t,s\in(-T_z,0].$$
We have  showed that every forward orbit of every semigroup is Lipschitz. So there exists a second positive constant $c_2>0$ so that
$$|\hat{\gamma}_z(t)-\hat{\gamma}_z(s)|=|\gamma_z(t)-\gamma_z(s)|\le c_2|t-s|, \quad\text{for all }t,s\in[0,+\infty).$$
Lastly, let $-T_z<t<0<s<+\infty$. Then
\begin{eqnarray*}
	|\hat{\gamma}_z(t)-\hat{\gamma}_z(s)|&\le&|\hat{\gamma}_z(t)-\hat{\gamma}_z(0)|+|\hat{\gamma}_z(0)-\hat{\gamma}_z(s)|\\
	&\le&c_1(-t)+c_2s\\
	&\le&\max\{c_1,c_2\}|t-s|,
\end{eqnarray*}
which leads to the desired implication.
\end{proof}

\bigskip

\section{\textbf{Lipschitz property of orbits on the unit circle}}
We move on to boundary orbits starting with some brief exposition which is mostly taken from \cite[Chapter 14]{book}. Let $(\phi_t)$ be a semigroup in $\D$. Clearly, for $z\in\D$, the trace of the orbit $\gamma_z$ is completely contained inside $\D$. Therefore, in this subsection, we only deal with orbits $\gamma_\zeta$ starting from a point $\zeta\in\partial\D$. Of course, the forward orbit of $\zeta$ either lies completely on the unit circle or has an initial part on the unit circle (even if it is just the singleton $\{\zeta\}$) and then falls into the unit disk until it reaches the Denjoy-Wolff point. This second part has already been dealt with in the previous section. So, as we mentioned in the Introduction, we will devote our study exclusively on the part $\gamma_\zeta([0,+\infty))\cap\partial\D$. It can be actually seen that this intersection is connected and is equal to $\gamma_\zeta([0,S_\zeta])$,  where $S_\zeta\in[0,+\infty]$ is called the \textit{life-time} of $\zeta$ and is given through the relation
\begin{equation}\label{life-time}
	S_\zeta:=\sup\{t\in[0,+\infty):\phi_t(\zeta)\in\partial\D\}
\end{equation}
(in case $S_\zeta=+\infty$ the intersection is the whole $\gamma_\zeta([0,+\infty))$). For the sake of convenience, we will say that the set $\gamma_\zeta([0,S_\zeta])$ (or $\gamma_\zeta([0,+\infty))$ whenever $S_\zeta=+\infty$) is the \textit{boundary orbit} of $\zeta$ and we will still denote it by $\gamma_\zeta$. It is obvious that when $S_\zeta=0$, then $\gamma_\zeta\equiv\{\zeta\}$ and there is no point in examining the Lipschitz condition. So, we will only work with $\zeta\in\partial\D$ such that $S_\zeta>0$. Any such point is called a \textit{contact point} for $(\phi_t)$. Evidently, every boundary fixed point of the semigroup is at once a contact point. In particular, if $\sigma\in\partial\D$ is a fixed point of $(\phi_t)$, then $S_\sigma=+\infty$ and $\gamma_\sigma(t)=\sigma$, for all $t\ge0$. So, again, $\gamma_\sigma\equiv\{\sigma\}$ and $\gamma_\sigma$ is easily seen to be Lipschitz. We disregard this case due to its simplicity.

An open arc (i.e. an arc not containing its endpoints) $A\subset\partial\D$ is called a \textit{contact arc} if every $\zeta\in A$ is a contact point of $(\phi_t)$. If there exists no other contact arc $B$ with $A\subsetneq B$, then $A$ is said to be a \textit{maximal} contact arc. Furthermore, if the one endpoint of a (maximal) contact arc $A$ is the Denjoy-Wolff point of the semigroup, then $A$ is known as an \textit{exceptional} (maximal) contact arc.

In order to state our main results concerning boundary orbits, we first have to proceed to a classification. We will distinguish boundary orbits into three main types, depending on their image through the Koenigs function and on whether the boundary orbit reaches the Denjoy-Wolff point or not.

	(i) Firstly, we will say that a boundary orbit $\gamma_\zeta$ is \textit{exceptional} if $S_\zeta=+\infty$ and $\zeta$ is not a fixed point of $(\phi_t)$. Clearly this signifies that $(\phi_t)$ is necessarily non-elliptic and that the arc $\gamma_\zeta([0,+\infty))$ has the Denjoy-Wolff point of $(\phi_t)$ as an endpoint.
	
	(ii) For the second type we will slightly abuse the definitions introduced in \cite[Chapter 15]{book}. Suppose that $\gamma_\zeta$ is not exceptional. Hence $S_\zeta<+\infty$. Then the image of $\gamma_\zeta$ through the Koenigs function $h$ is either the horizontal rectilinear segment $\{h(\zeta)+s:s\in[0,S_\zeta]\}$ (non-elliptic case) or the spiral segment $\{e^{-\mu s}h(\zeta):s\in[0,S_\zeta]\}$ (elliptic case with spectral value $\mu$). Then the whole horizontal half-line $L_\zeta:=\{h(\zeta)+s:s\in(-\infty,S_\zeta]\}$ or the whole half-spiral $\Lambda_\zeta:=\{e^{-\mu s}h(\zeta):s\in(-\infty,S_\zeta]\}$ is contained in $\C\setminus\Omega$. For the non-elliptic case, suppose as well that there exists $\epsilon>0$ such that $D(h(\zeta)+S_\zeta,\epsilon)\cap\partial\Omega\subset L_\zeta$, where $D(w,r)$ denotes the Euclidean disk of center $w\in\C$ and radius $r>0$. Likewise, in the elliptic case, suppose that there exists $\epsilon>0$ such that $D(e^{-\mu S_\zeta}h(\zeta),\epsilon)\cap\partial\Omega\subset \Lambda_\zeta$. In such cases, we will say that the boundary orbit $\gamma_\zeta$ is \textit{isolated}. For the sake of convenience, we might refer to $L_\zeta$ as a \textit{radial slit} and to $\Lambda_\zeta$ as a \textit{spiral slit}, while the points $h(\zeta)+S_\zeta$ and $e^{-\mu S_\zeta}h(\zeta)$ will be their \textit{tip-points}, respectively.
	
	(iii) If none of the above hold, we will say that $\gamma_\zeta$ is a \textit{boundary orbit of the third type}. 
	\begin{figure}
		\centering
		\includegraphics[scale=0.35]{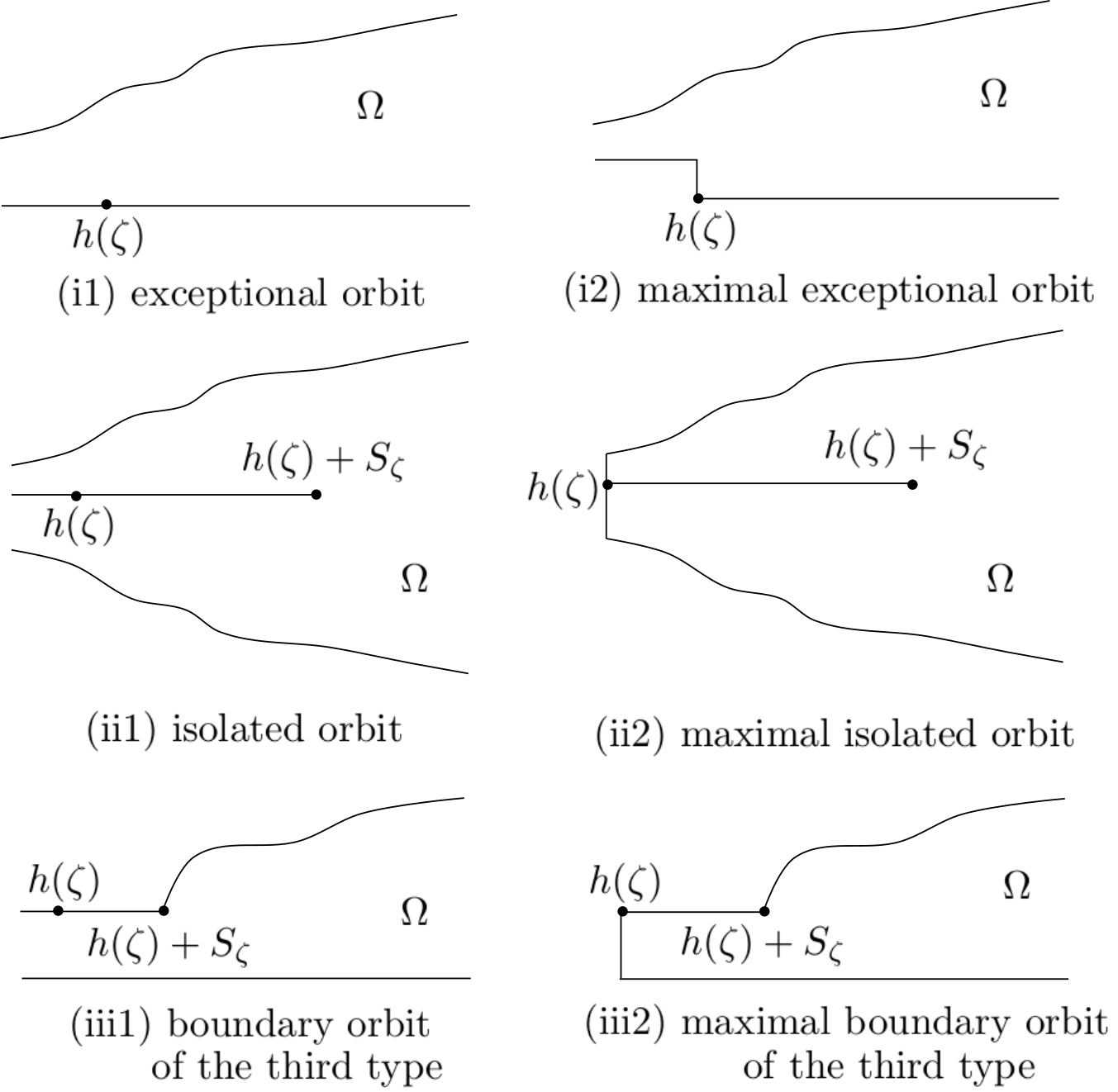}
		\caption{Images of boundary orbits through the Koenigs function in non-elliptic semigroups}
		\label{fig:boundary orbits}
	\end{figure}

	For an illustration of the preceding classification in the case of non-elliptic semigroups see Figure \ref{fig:boundary orbits}.

\medskip

Given a boundary orbit $\gamma_\zeta$, where $\zeta$ is a contact point but not a boundary fixed point, its trace on the unit circle is always a circular arc. Whenever $S_\zeta<+\infty$, then this trace is the arc with endpoints $\zeta$ and $\gamma_\zeta(S_\zeta)$ including these endpoints. On the other hand, whenever $S_\zeta=+\infty$, then the trace is the arc with endpoints $\zeta$ and the Denjoy-Wolff point, but including only $\zeta$. In any case, the trace without its endpoints is therefore a contact arc. By \cite[Theorem 14.2.10]{book}, we know that both the Koenigs function $h$ and the infinitesimal generator $G$ of a semigroup $(\phi_t)$ may be extended holomorphically through contact arcs. Therefore, if $(\phi_t)$ is non-elliptic, the equalities
\begin{equation}\label{extension}
	\frac{\partial\gamma_\zeta(t)}{\partial t}=G(\gamma_\zeta(t)) \text{ and } G(\gamma_\zeta(t))=\frac{1}{h'(\gamma_\zeta(t))},
\end{equation}
continue to hold for all $t\in(0,S_\zeta)$. In case $(\phi_t)$ is elliptic with spectral value $\mu\in\C$, $\textup{Re}\mu>0$, the second equality transforms to
\begin{equation}\label{extension2}
	G(\gamma_\zeta(t))=-\mu\frac{h(\gamma_\zeta(t))}{h'(\gamma_\zeta(t))},
\end{equation}
for all $t\in(0,S_\zeta)$.

Having provided all the necessary exposition, we are now ready to state and prove our main result concerning boundary orbits:
\begin{theo}\label{thm:exc-isol}
	Let $(\phi_t)$ be a semigroup in $\D$ and suppose that $\zeta\in\partial\D$ is not a fixed point of $(\phi_t)$ satisfying $S_\zeta>0$.
	\begin{enumerate}
		\item[\textup{(a)}] If $\gamma_\zeta$ is an exceptional orbit, then it is Lipschitz.
		\item[\textup{(b)}] If $\gamma_\zeta$ is an isolated orbit, then it is not Lipschitz.
	\end{enumerate}
\end{theo}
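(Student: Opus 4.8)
The plan is to prove both parts through the Koenigs function, reducing the Lipschitz question to control of $|G(\gamma_\zeta(t))|$ along the orbit. Throughout I would use the extension formula: since the trace of $\gamma_\zeta$, with its endpoints removed, is a contact arc across which both $h$ and $G=1/h'$ (resp. $G=-\mu h/h'$) extend holomorphically by \cite[Theorem 14.2.10]{book}, one has $\gamma_\zeta'(t)=G(\gamma_\zeta(t))$ on the interior of the parameter interval, together with $h(\gamma_\zeta(t))=h(\zeta)+t$ (non-elliptic case) and $h(\gamma_\zeta(t))=e^{-\mu t}h(\zeta)$ (elliptic case). Hence, arguing as in Lemma \ref{generator-lemma}, $\gamma_\zeta$ is Lipschitz as soon as $|G(\gamma_\zeta(t))|$ is bounded on the whole parameter interval, while it fails to be Lipschitz if $|\gamma_\zeta(t)-\gamma_\zeta(t^\ast)|/|t-t^\ast|$ is unbounded as $t\to t^\ast$ for some endpoint $t^\ast$.

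\emph{Part (a): exceptional orbits.} Here $(\phi_t)$ is non-elliptic, $S_\zeta=+\infty$, and $\ell^+:=\{h(\zeta)+t:t\ge0\}\subset\partial\Omega$ is a horizontal half-line escaping to $+\infty$. Using that $\Omega$ is convex in the positive direction I would first check that $\Omega$ lies, near $\ell^+$, entirely on one side of the line $\ell$ carrying $\ell^+$, and that $h(\zeta)$ cannot be a slit tip; in particular $\zeta$ is interior to a maximal contact arc, so $G\circ\gamma_\zeta$ is continuous, hence bounded, on each interval $[0,T_0]$. Then I would reflect $\Omega$ in $\ell$ to form $\widehat\Omega:=\Omega\cup\ell^+\cup\Omega^{*}$, which is simply connected, again convex in the positive direction, and now contains $\ell^+\setminus\{h(\zeta)\}$ in its interior; by the Schwarz reflection principle $h$ extends to a conformal map $\widehat h$ of a domain $U\supset\D$ onto $\widehat\Omega$, with $\widehat h$ blowing up at $\tau\in\partial U$. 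Convexity in the positive direction of $\widehat\Omega$ gives $\delta_{\widehat\Omega}(h(\zeta)+t)\ge\delta_{\widehat\Omega}(h(\zeta)+1)>0$ for $t\ge1$, whereas $\delta_U(\gamma_\zeta(t))\le|\gamma_\zeta(t)-\tau|\to0$. The Koebe distortion theorem applied to $\widehat h$ then yields
$$|G(\gamma_\zeta(t))|=\frac{1}{|\widehat h'(\gamma_\zeta(t))|}\;\le\;4\,\frac{\delta_U(\gamma_\zeta(t))}{\delta_{\widehat\Omega}(h(\zeta)+t)}\;\le\;\frac{4\,|\gamma_\zeta(t)-\tau|}{\delta_{\widehat\Omega}(h(\zeta)+1)}\;\xrightarrow[t\to+\infty]{}\;0 .$$
Combined with boundedness on $[0,T_0]$ this gives $\sup_{t\ge0}|G(\gamma_\zeta(t))|<+\infty$, and integrating $\gamma_\zeta'=G\circ\gamma_\zeta$ shows $\gamma_\zeta$ is Lipschitz.

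\emph{Part (b): isolated orbits.} Now $S_\zeta<+\infty$; put $z_0:=\gamma_\zeta(S_\zeta)\in\partial\D$ and $w_0:=h(z_0)$, which is the tip point $h(\zeta)+S_\zeta$ (resp. $e^{-\mu S_\zeta}h(\zeta)$). By the isolation hypothesis $\partial\Omega$ agrees near $w_0$ with the radial slit $L_\zeta$ (resp. the spiral slit $\Lambda_\zeta$), a smooth Jordan arc with endpoint $w_0$, so $\Omega$ is, near $w_0$, a disk slit along that arc. Composing $h$ with a local branch of $w\mapsto(w-w_0)^{1/2}$ (cut along the slit) unfolds this into a half-disk, across whose straight boundary the composition extends conformally with non-vanishing derivative; hence $h(z)-w_0$ is comparable to $(z-z_0)^2$ near $z_0$, so $h$ is differentiable at $z_0$ with $h'(z_0)=0$. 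Since $|h(\gamma_\zeta(t))-w_0|=S_\zeta-t$ in the non-elliptic case and $|h(\gamma_\zeta(t))-w_0|=|h(\zeta)|\,|e^{-\mu t}-e^{-\mu S_\zeta}|\asymp S_\zeta-t$ in the elliptic case, I get
$$\frac{|\gamma_\zeta(t)-\gamma_\zeta(S_\zeta)|}{S_\zeta-t}\;\asymp\;\left(\frac{|h(\gamma_\zeta(t))-h(z_0)|}{|\gamma_\zeta(t)-z_0|}\right)^{-1}\;\xrightarrow[t\to S_\zeta]{}\;\frac{1}{|h'(z_0)|}=+\infty ,$$
the bracketed expression being a difference quotient of $h$ at $z_0$. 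Thus the Lipschitz inequality is violated as $t\to S_\zeta$, and $\gamma_\zeta$ is not Lipschitz (equivalently $|G(\gamma_\zeta(t))|=1/|h'(\gamma_\zeta(t))|\to+\infty$).

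The hard part is part (a): the decisive idea is to reflect $\Omega$ across the boundary half-line and to notice that the doubled domain $\widehat\Omega$ remains convex in the positive direction, which is precisely what keeps $\delta_{\widehat\Omega}(h(\zeta)+t)$ bounded away from $0$; after that the reflection principle and Koebe distortion do the rest, the only bookkeeping being the (routine but careful) verification that $\widehat\Omega$ is simply connected, that $h$ really extends onto it, and that $\tau\in\partial U$. In part (b) the single non-elementary ingredient is the classical square-root behaviour of a conformal map at a slit tip, which could alternatively be quoted from the prime-end theory of conformal mappings.
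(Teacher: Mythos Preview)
Your treatment of part (b) is sound and matches the paper's in spirit: the paper simply quotes \cite[Theorem~15.2.6]{book} for the blow-up of $G$ at the tip, while you sketch the underlying square-root mechanism directly.

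In part (a) there is a genuine gap at the initial endpoint. The implication ``$h(\zeta)$ is not a slit tip $\Rightarrow$ $\zeta$ is interior to a maximal contact arc'' is false. Take $\Omega=\{w:\operatorname{Im}w>0\}\setminus\{w:\operatorname{Re}w\le 0,\ \operatorname{Im}w\le 1\}$: this domain is simply connected and convex in the positive direction, it lies entirely above the real line, and $[0,+\infty)$ is the image of an exceptional boundary orbit. Here $h(\zeta)=0$ is an interior corner of $\partial\Omega$, certainly not a slit tip, yet the contact arc terminates at $\zeta$ because the negative real axis lies in the interior of $\C\setminus\Omega$, not on $\partial\Omega$. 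Hence $\zeta\in\partial U$ and $h(\zeta)\in\partial\widehat\Omega$, so your Koebe ratio $\delta_U(\gamma_\zeta(t))/\delta_{\widehat\Omega}(h(\zeta)+t)$ is $0/0$ as $t\to 0^+$ and yields no bound. The paper handles this endpoint by an entirely different device: it invokes \cite[Theorem~14.3.1(1)]{book} for the existence of the finite angular limit $\angle\lim_{z\to\zeta}G(z)$, writes $G(z)=(z-\tau)(\bar\tau z-1)p(z)$ via Berkson--Porta with $p(\D)\subset\overline{\mathbb H}$, observes that $\operatorname{Re}p\equiv 0$ along the contact arc, and applies a Lindel\"of-type result \cite[Proposition~2.4.2]{book} to upgrade the angular limit to the tangential limit along $\gamma_\zeta$.

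Your reflection argument for $t\to+\infty$, on the other hand, is correct and is a genuinely different route from the paper's. The key observation that $\Omega$ lies on one side of $\ell$ (so that the doubled domain $\widehat\Omega$ is simply connected and again convex in the positive direction) is exactly what makes $\delta_{\widehat\Omega}(h(\zeta)+t)$ bounded below for $t\ge 1$, after which Koebe forces $|G(\gamma_\zeta(t))|\to 0$. The paper instead reruns the Berkson--Porta trick at $\tau$, combining $\angle\lim_{z\to\tau}G(z)=0$ with the same tangential upgrade. Your geometric approach is pleasantly self-contained at the Denjoy--Wolff endpoint, but it does not substitute for the function-theoretic input needed at $t=0$ when $\zeta$ is an endpoint of the maximal contact arc.
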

\begin{proof}
	As in the case of forward orbits inside the unit disk, the objective is to check whether the infinitesimal generator $G$ of $(\phi_t)$ is bounded or not along the boundary orbit. Of course, in any relatively compact subset of $\gamma_\zeta([0,S_\zeta])$ (or $\gamma_\zeta([0,+\infty))$ when $\gamma_\zeta$ is exceptional), $G$ remains bounded since it can be holomorphically extended through it. So, we only need to deal with the upper limits
	$$\limsup\limits_{t\to0}|G(\gamma_\zeta(t))| \;\;\textup{ and } \;\; \limsup\limits_{t\to S_\zeta}|G(\gamma_\zeta(t))|.$$ 
	(a) First, suppose that $\gamma_\zeta$ is exceptional and thus $S_\zeta=+\infty$. Without loss of generality, suppose that $\gamma_\zeta((0,+\infty))$ is actually a maximal contact arc. This is the hardest case. Indeed, if $\gamma_\zeta((0,+\infty))$ is not maximal, then by definition it is contained inside another contact arc. Evidently, $\zeta$ will be an interior point of that arc (i.e. not an endpoint) and $G(\zeta)$ will be well-defined. So the upper limit as $t\to0$ would be trivially finite and equal to $G(\zeta)$. Thus, assuming maximality, we know that $\angle\lim_{z\to\zeta}G(z)=:l_0$ exists finitely; cf. \cite[Theorem 14.3.1(1)]{book}. By the well-known \textit{Berkson-Porta Formula}, there exists a non-vanishing holomorphic function $p:\D\to\overline{\mathbb{H}}$, where $\mathbb{H}$ is the right half-plane, such that
	$$G(z)=(z-\tau)(\bar{\tau}z-1)p(z),$$
	where $\tau$ is the Denjoy-Wolff point of $(\phi_t)$. Clearly, this signifies that the limit $\angle\lim_{z\to\zeta}p(z)$ also exists finitely and that $p$ may be extended through $\gamma_\zeta((0,+\infty))$. In addition, it can be proved (see e.g. the proof in \cite[Theorem 14.3.1]{book}) that $\textup{Re}p(\gamma_\zeta(t))=0$, for all $t\in(0,+\infty)$. As a result, applying \cite[Proposition 2.4.2]{book}, we get
	$$\angle\lim\limits_{z\to\zeta}p(z)=\lim\limits_{t\to0}p(\gamma_\zeta(t)).$$
	Through the Berkson-Porta Formula, the same equality of limits holds for $G$ as well. Therefore, we get that the limit $\lim_{t\to0}G(\gamma_\zeta(t))$ actually exists finitely. Recall that due to the fact that $\gamma_\zeta$ is exceptional, the second endpoint of $\gamma_\zeta((0,+\infty))$ is the Denjoy-Wolff point $\tau$ of the semigroup. As we have mentioned in Remark \ref{remark}, it is known that $\angle\lim_{z\to\tau}G(z)=0$. Following exactly the same steps, we can prove that
	$$\angle\lim\limits_{z\to\tau}G(z)=\lim\limits_{t\to+\infty}G(\gamma_\zeta(t)).$$
	Hence both the limit $\lim_{t\to0}|G(\gamma_\zeta(t))|$ and the limit $\lim_{t\to+\infty}|G(\gamma_\zeta(t))|=\lim_{t\to S_\zeta}|G(\gamma_\zeta(t))|$ exist finitely. Consequently, relation (\ref{extension}) shows that $\gamma_\zeta$ is Lipschitz.
	
	(b) Next, suppose that $\gamma_\zeta$ is isolated. Ergo $S_\zeta<+\infty$. An identical proof as in the previous case shows that the limit $\lim_{t\to0}|G(\gamma_\zeta(t))|$ exists finitely regardless of whether $\gamma_\zeta((0,S_\zeta))$ is a maximal contact arc or not. We are left with evaluating the respective limit as $t\to S_\zeta$. Let $h$ be the Koenigs function of $(\phi_t)$. As we stated previously, the fact that $\gamma_\zeta$ is isolated means that $h\circ\gamma_\zeta([0,S_\zeta])$ is part of a radial slit or part of a spiral slit and that $h(\gamma_\zeta(S_\zeta))$ is its tip-point. In such cases, \cite[Theorem 15.2.6]{book} certifies that $\angle\lim_{z\to\gamma_\zeta(S_\zeta)}G(z)=\infty$. However, the same procedure as before yields
	$$\angle\lim\limits_{z\to\gamma_\zeta(S_\zeta)}G(z)=\lim\limits_{t\to S_\zeta}G(\gamma_\zeta(t)).$$
	As a result $\lim_{t\to S_\zeta}|G(\gamma_\zeta(t))|=\infty$ and (\ref{extension}) proves that $\gamma_\zeta$ is not Lipschitz.
\end{proof}

\medskip

Finally, we need to examine the Lipschitz condition for boundary orbits of the third type. Contrary to exceptional and isolated orbits, not all boundary orbits conform to the same behavior. Indeed, it will be revealed that some of them are Lipschitz and some of them are not. As with the backward orbits, our aim is to discover a necessary and sufficient condition that will help us verify the Lipschitz condition for boundary orbits of the third type. Let $\gamma_\zeta$ be such an orbit. Arguing exactly as in the proof of Theorem \ref{thm:exc-isol}, we see that $\lim_{t\to0}|G(\gamma_\zeta(t))|$ once again exists finitely. Keeping in mind that $\gamma_\zeta((0,S_\zeta))$ is still a contact arc, relation (\ref{extension}) provides a first necessary and sufficient condition: a boundary orbit $\gamma_\zeta$ of the third type is Lipschitz if and only if $\limsup_{t\to S_\zeta}|G(\gamma_\zeta(t))|$ is finite. In reality, the actual limit exists and there is no need for the upper limit. Nevertheless, the limit exists either finitely or infinitely. Both situations may arise. 

As mentioned before, the set $\gamma_\zeta((0,S_\zeta))$ is a contact arc and both the Koenigs function $h$ and the infinitesimal generator $G$ may be extended holomorphically through it. However, depending on the shape of the Koenigs domain $\Omega$, it is possible that the extended $h$ does not maintain its univalence. We will tweak this extension a little bit in order to reach the desired conclusion.

\begin{construction}
	Let $(\phi_t)$ be a non-elliptic semigroup in $\D$ with Denjoy-Wolff point $\tau\in\partial\D$, Koenigs function $h$ and Koenigs domain $\Omega=h(\D)$. Fix $\zeta\in\partial\D$ such that $S_\zeta>0$, $\zeta$ is not a fixed point of $(\phi_t)$ and the boundary orbit $\gamma_\zeta([0,S_\zeta])$ is of the third type. Since $(\phi_t)$ is non-elliptic, the image of $\gamma_\zeta$ through $h$ is just the rectilinear segment $\{h(\zeta)+t:t\in[0,S_\zeta]\}$ (recall that $h(\zeta)=\angle\lim_{z\to\zeta}h(z)$ exists in $\C$). Suppose that there exist two sequences ${t_n}$ and ${s_n}$ with $\lim_{n\to+\infty}t_n=\lim_{n\to+\infty}s_n=0$ and two sequences ${p_n}$ and ${q_n}$ with $\lim_{n\to+\infty}p_n=\lim_{n\to+\infty}q_n=S_\zeta$ such that
	$$L_n^+=\{w\in\C:\textup{Re}h(\zeta)\le \textup{Rew}\le\textup{Re}h(\zeta)+p_n,\textup{Im}w=\textup{Im}h(\zeta)+t_n\}\subset\C\setminus\Omega$$
	and
	$$L_n^-=\{w\in\C:\textup{Re}h(\zeta)\le\textup{Re}w\le\textup{Re}h(\zeta)+q_n,\textup{Im}w=\textup{Im}h(\zeta)-s_n\}\subset\C\setminus\Omega,$$
	for all $n\in\N$. In simpler words, there exist components of $\partial\Omega$ that accumulate onto $h\circ\gamma_\zeta([0,S_\zeta])$ both from above and from below. But if this were the case, then the whole rectilinear segment $h\circ\gamma_\zeta([0,S_\zeta])$ would belong in the impression of the same prime end of $\Omega$. Therefore, the preimage of $h\circ\gamma_\zeta([0,S_\zeta])$ through $h^{-1}$ would be a singleton. Having started with a boundary orbit $\gamma_\zeta([0,S_\zeta])$ satisfying our initial assumptions, we are led to a contradiction. This signifies that $h\circ\gamma_\zeta([0,S_\zeta])$ is necessarily isolated from the one side i.e. there exist some $\epsilon_1,\epsilon_2>0$ such that either $
	L^+=\{w\in\C:\textup{Re}h(\zeta)+\epsilon_1<\textup{Re}w<\textup{Re}h(\zeta)+S_\zeta,\textup{Im}h(\zeta)<\textup{Im}w<\textup{Im}h(\zeta)+\epsilon_2\}\subset\Omega
	$
	or $L^-=\{w\in\C:\textup{Re}h(\zeta)+\epsilon_1<\textup{Re}w<\textup{Re}h(\zeta)+S_\zeta,\textup{Im}h(\zeta)-\epsilon_2<\textup{Im}w<\textup{Im}h(\zeta)\}\subset\Omega.
	$
	 It is easy to see that both cannot be true, otherwise our boundary orbit would be isolated. Set $\Omega^+:=\{w\in\Omega:\textup{Im}w>\textup{Im}h(\zeta)\}$ and $\Omega^-=\{w\in\Omega:\textup{Im}w<\textup{Im}h(\zeta)\}$. If $L^+\subset\Omega$, then write $\Omega^*=\Omega^+$, whereas if $L^-\subset\Omega$, write $\Omega^*=\Omega^-$. Denote by $R(\Omega^*)$ the reflection of $\Omega^*$ with respect to the horizontal line that carries $h\circ\gamma_\zeta([0,S_\zeta])$ and set $\widetilde{\Omega}=\Omega^*\cup h\circ\gamma_\zeta((0,S_\zeta)) \cup R(\Omega^*)$. Clearly $\widetilde{\Omega}$ is simply connected. Let $\D^*=h^{-1}(\Omega^*)$ and set $R(\D^*)$ its reflection through the circular arc $\gamma_\zeta((0,S_\zeta))$. Let $\widetilde{\D}=\D^*\cup\gamma_\zeta((0,S_\zeta)) \cup R(\D^*)$. As we said before, $h$ may be extended holomorphically through $\gamma_\zeta((0,S_\zeta))$. Still denoting by $h$ its extended version, by Schwarz's Reflection Principle we see that $h$ maps $\widetilde{\D}$ conformally onto $\widetilde{\Omega}$.
\end{construction} 

Having constructed the necessary domains, we are now ready to state our final theorem regarding boundary orbits of the third type. The simply connected domains $\widetilde{\Omega}$ and $\widetilde{\D}$ will play the roles that $\Omega$ and $\D$ played, respectively, in Theorem \ref{backward}.

\begin{theo}\label{thm:third}
	Let $(\phi_t)$ be a non-elliptic semigroup in $\D$ and suppose that $\zeta\in\partial\D$ is such that $\gamma_\zeta$ is a boundary orbit of the third type. Then, $\gamma_\zeta$ is Lipschitz if and only if
	\begin{equation}\label{eq:third}
		\limsup\limits_{t\to S_\zeta}\frac{\lambda_{\widetilde{\Omega}}(h(\zeta)+t)}{\lambda_{\widetilde{\D}}(\gamma_\zeta(t))}<+\infty,
	\end{equation}
	where $\widetilde{\Omega},\widetilde{\D}$ are the domains constructed above.
\end{theo}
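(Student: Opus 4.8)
The plan is to reduce the Lipschitz property of $\gamma_\zeta$ to the boundedness of $|G\circ\gamma_\zeta|$ near $t=S_\zeta$ — exactly as in the proof of Theorem~\ref{thm:exc-isol} — and then to rewrite that quantity using the conformal map $h\colon\widetilde{\D}\to\widetilde{\Omega}$ produced by the Construction.

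First I would recall the reduction already carried out in the paragraphs preceding the statement. Since $\gamma_\zeta$ is of the third type we have $S_\zeta<+\infty$, the set $\gamma_\zeta((0,S_\zeta))$ is a contact arc, and both $h$ and $G$ extend holomorphically through it, so (\ref{extension}) holds for $t\in(0,S_\zeta)$. The Berkson--Porta factorization argument used for Theorem~\ref{thm:exc-isol}(a) (together with \cite[Proposition 2.4.2]{book}) shows that $\lim_{t\to0}|G(\gamma_\zeta(t))|$ exists finitely, whether or not $\gamma_\zeta((0,S_\zeta))$ is a maximal contact arc; and $G$ is bounded on every compact subset of the arc. Hence, by (\ref{extension}), $\gamma_\zeta$ is Lipschitz if and only if $\limsup_{t\to S_\zeta}|G(\gamma_\zeta(t))|<+\infty$, and it only remains to identify this last quantity with the ratio in (\ref{eq:third}).

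Next I would invoke the Construction. The extended Koenigs function maps $\widetilde{\D}$ conformally onto $\widetilde{\Omega}$; passing to these reflected domains is precisely what lets us use honest hyperbolic densities, since the holomorphic extension of $h$ through the arc need not remain univalent on a full neighbourhood, whereas it is univalent on $\widetilde{\D}$. For $t\in(0,S_\zeta)$ the point $\gamma_\zeta(t)$ lies on the circular arc $\gamma_\zeta((0,S_\zeta))\subset\widetilde{\D}$, one has $h(\gamma_\zeta(t))=h(\zeta)+t\in\widetilde{\Omega}$, and (\ref{extension}) gives $G(\gamma_\zeta(t))=1/h'(\gamma_\zeta(t))$, where $h'$ never vanishes (as $h$ is conformal on $\widetilde{\D}$ and $G$ is zero-free). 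The conformal invariance of the hyperbolic density (cf. (\ref{hyperbolicinvariance})) yields $\lambda_{\widetilde{\D}}(\gamma_\zeta(t))=\lambda_{\widetilde{\Omega}}(h(\zeta)+t)\,|h'(\gamma_\zeta(t))|$, hence
$$
|G(\gamma_\zeta(t))|=\frac{1}{|h'(\gamma_\zeta(t))|}=\frac{\lambda_{\widetilde{\Omega}}(h(\zeta)+t)}{\lambda_{\widetilde{\D}}(\gamma_\zeta(t))},\qquad t\in(0,S_\zeta).
$$
Taking $\limsup$ as $t\to S_\zeta$ and combining with the reduction of the previous paragraph gives the equivalence in (\ref{eq:third}).

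As for the main obstacle: within the proof proper there is essentially none — it is a one-line computation once the reflected domains are in place. The substantive content has already been absorbed into the Construction (the dichotomy that $h\circ\gamma_\zeta([0,S_\zeta])$ is isolated from exactly one side, so that Schwarz reflection produces a genuine simply connected $\widetilde{\Omega}$ and the corresponding $\widetilde{\D}$) and into the transplantation of the identities $G=1/h'$ and $h\circ\gamma_\zeta(t)=h(\zeta)+t$ to the boundary arc via \cite[Theorem 14.2.10]{book}. The only points requiring a word of care are that the endpoint $t\to0$ contributes nothing (handled by importing the Berkson--Porta argument from Theorem~\ref{thm:exc-isol}) and that the equality rather than mere comparability of $|G(\gamma_\zeta(t))|$ with the displayed ratio makes the criterion an exact characterization.
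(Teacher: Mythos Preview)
Your proposal is correct and follows essentially the same approach as the paper: reduce to boundedness of $|G(\gamma_\zeta(t))|$ as $t\to S_\zeta$ (the $t\to0$ endpoint being handled by the Berkson--Porta argument, which the paper records just before the statement), then use the extended identity $G=1/h'$ on $\widetilde{\D}$ together with the conformal invariance of the hyperbolic density to rewrite $|G(\gamma_\zeta(t))|$ as the ratio $\lambda_{\widetilde{\Omega}}(h(\zeta)+t)/\lambda_{\widetilde{\D}}(\gamma_\zeta(t))$. Your write-up is in fact somewhat more explicit than the paper's in justifying why only the endpoint $t\to S_\zeta$ matters.
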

\begin{proof}
	Let $h$ and $G$ be the Koenigs function and infinitesimal generator of $(\phi_t)$, respectively. $G$ can also be holomorphically extended through $\gamma_\zeta((0,S_\zeta))$, so still denoting by $G$ the extension, we can consider $G$ as a holomorphic function on $\widetilde{\D}$ (the pre-image of $\widetilde{\Omega}$ through the extended $h^{-1}$). Since the identity $\partial\phi_t(z)/\partial t=G(\phi_t(z))$ continues to hold in $\widetilde{\D}$, we understand that $\gamma_\zeta$ is Lipschitz if and only if $\limsup_{t\to S_\zeta}|G(\gamma_\zeta(t))|<+\infty$. However, the identity $G=1/h'$ also holds in $\widetilde{\D}$. Therefore, by (\ref{hyperbolicinvariance}),
	$$|G(\gamma_\zeta)|=\frac{1}{|h'(\gamma_\zeta(t))|}=\frac{\lambda_{\widetilde{\Omega}}(h(\gamma_\zeta(t)))}{\lambda_{\widetilde{\D}}(\gamma_\zeta(t))},$$
	for all $t\in(0,S_\zeta)$ and the desired result follows.
\end{proof}

We understand that depending on the shape of the initial Koenigs domain $\Omega$, the limit in (\ref{eq:third}) can sometimes be finite and some others infinite. Of course, Theorem \ref{thm:third} lacks the clearly intuitive nature of Theorem \ref{backward}. Nonetheless, it still provides a necessary and sufficient condition that may be verified for certain domains.

\begin{rmk}
	There is an analogue of Theorem \ref{thm:third} for elliptic semigroups, as well. However, recall that if $(\phi_t)$ is elliptic with spectral value $\mu\in\C$, $\textup{Re}\mu>0$, then its Koenigs domain $\Omega$ is $\mu$-spirallike and the image of $\gamma_\zeta([0,S_\zeta])$ through the Koenigs function $h$ is a spiral segment. Through a logarithmic mapping, we may transform this spiral segment to a rectilinear segment and part of $\Omega$ to a new domain $\Omega'$. Then, we may reflect $\Omega'$ as before and reach a similar condition. We omit this result for the sake of avoiding repetition.
\end{rmk}

\begin{rmk}
	An analogue of Remark \ref{remark} holds for boundary orbits too. Firstly, since exceptional orbits converge to the Denjoy-Wolff, we know that $\lim_{t\to+\infty}G(\gamma_\zeta(t))=0$; see the proof of Theorem \ref{thm:exc-isol}. Therefore, $\gamma_\zeta$ cannot be bi-Lipschitz. In addition, all isolated orbits are not bi-Lipschitz since they are not even Lipschitz. Finally, a boundary orbit of the third type is bi-Lipschitz if and only if it is Lipschitz since the modulus of the infinitesimal generator is always bounded from below along it, but not necessarily bounded from above.
\end{rmk}

\section{\textbf{Semigroups in simply connected domains}}

Let $(\phi_t)$ be a semigroup in $\D$. Let $D\subsetneq\mathbb{C}$ be a simply connected domain and $f:\D\to D$ a Riemann mapping. Then, through a conjugation formula, we may define the mappings 
\begin{equation}\label{s1}
\phi_t^D=f\circ\phi_t\circ f^{-1}:D\to D, \;\;\;t\ge0.
\end{equation} 
It can be easily verified that in this way, we construct a semigroup $(\phi_t^D)$ in the simply connected domain $D$. We call this semigroup the $D$\textit{-version} of $(\phi_t)$. For $\zeta\in D$, we denote by $\gamma_\zeta^D$ its forward orbit and by $\tilde{\gamma}_\zeta^D$ its backward orbit.

Out of all the properties of the semigroup $(\phi_t)$, the most important ones are those shared by all its versions. The objective of this section is to find out whether the fact that all forward orbits are Lipschitz is such a property. Clearly, the Lipschitz condition is not conformally invariant in general. Nevertheless, it might be conformally invariant when restricting ourselves to orbits of semigroups.

Let $G:\D\to\mathbb{C}$ and $G^D:D\to\mathbb{C}$ be the infinitesimal generators of $(\phi_t)$ and $(\phi_t^D)$, respectively. By the definition of the infinitesimal generator, we know that
$$\frac{\partial\phi_t^D(\zeta)}{\partial t}=G^D(\phi_t^D(\zeta)), \quad t\ge0, \; \zeta\in D.$$
Following exactly the footsteps of the previous section, we are able to prove the following:
\begin{lm}\label{general}
	Let $(\phi_t^D)$ be a semigroup in $D$ with infinitesimal generator $G^D$. Fix $\zeta\in D$. The forward orbit $\gamma_\zeta^D$ (respectively, the backward orbit $\tilde{\gamma}_\zeta^D$) is Lipschitz if and only if $$\limsup_{t\to+\infty}|G^D(\gamma_\zeta^D(t))|<+\infty$$ $$(respectively, \;\;\;\;\limsup_{t\to T_\zeta}|G^D(\tilde{\gamma}_\zeta^D(t))|<+\infty).$$
\end{lm}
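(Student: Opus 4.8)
The plan is to observe that this is a domain-independent restatement of the arguments already carried out in Section 4, so the proof is obtained by transcribing them with $(\phi_t)$ replaced by $(\phi_t^D)$ and $G$ by $G^D$. The only structural facts used in those arguments are the flow identity $\partial_t\phi_t^D(\zeta)=G^D(\phi_t^D(\zeta))$ and the semigroup law $\phi_{t+s}^D=\phi_t^D\circ\phi_s^D$, both of which are available in any simply connected domain $D$. Differentiating \eqref{s1} and using that $f$ is conformal also shows that $\gamma_\zeta^D$ and $\tilde\gamma_\zeta^D$ are real-analytic curves with $(\gamma_\zeta^D)'(t)=G^D(\gamma_\zeta^D(t))$, exactly as in \eqref{i3}.

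For the forward orbit $\gamma_\zeta^D$: if $\limsup_{t\to+\infty}|G^D(\gamma_\zeta^D(t))|<+\infty$, then, combining this with the continuity of $G^D\circ\gamma_\zeta^D$ on compact intervals, there is $c_\zeta>0$ with $|G^D(\gamma_\zeta^D(t))|\le c_\zeta$ for all $t\ge0$; hence for $t\ge s\ge0$,
\[
|\gamma_\zeta^D(t)-\gamma_\zeta^D(s)|=\left|\int_s^t G^D(\gamma_\zeta^D(u))\,du\right|\le c_\zeta(t-s),
\]
so $\gamma_\zeta^D$ is Lipschitz. Conversely, if $\gamma_\zeta^D$ is Lipschitz with constant $C$, then since $\gamma_\zeta^D$ is differentiable, $|G^D(\gamma_\zeta^D(t))|=|(\gamma_\zeta^D)'(t)|\le C$ for every $t\ge0$, giving the finite limsup. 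This mirrors the proof of Theorem \ref{forward}.

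For the backward orbit $\tilde\gamma_\zeta^D$ the statement is literally Lemma \ref{generator-lemma} for the semigroup $(\phi_t^D)$, and the proof is the same. If the limsup is finite, bound $|G^D(\tilde\gamma_\zeta^D(t))|$ by some $c$ on $[0,T_\zeta)$; for $0\le t_1\le t_2<T_\zeta$ choose $s\in(t_2,T_\zeta)$, set $s_i=s-t_i$, and use $\phi_{s_i}^D(\tilde\gamma_\zeta^D(s))=\tilde\gamma_\zeta^D(t_i)$ together with the flow identity to get $|\tilde\gamma_\zeta^D(t_2)-\tilde\gamma_\zeta^D(t_1)|=|\int_{t_1}^{t_2}G^D(\tilde\gamma_\zeta^D(t))\,dt|\le c(t_2-t_1)$. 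For the converse, argue by contradiction as in Lemma \ref{generator-lemma}: if the limsup is infinite, pass to a sequence $t_n\to T_\zeta$ along which one of $\pm\textup{Re}\,G^D(\tilde\gamma_\zeta^D(t_n))$ or $\pm\textup{Im}\,G^D(\tilde\gamma_\zeta^D(t_n))$ tends to $+\infty$; by continuity of $G^D\circ\tilde\gamma_\zeta^D$ that component stays above any prescribed level on a whole subinterval, and integrating forces $|\tilde\gamma_\zeta^D(t_2)-\tilde\gamma_\zeta^D(t_1)|>C(t_2-t_1)$ for a suitable pair, contradicting the assumed Lipschitz bound.

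I do not expect a genuine obstacle here: the content is entirely the transfer of the flow equation and the semigroup law through the conjugation \eqref{s1}, and the estimates are identical to those already established. The only points one must state with care are that $T_\zeta$ may be finite or infinite depending on $\zeta$, and that — as in Lemma \ref{generator-lemma} — the argument uses only the interval $[0,T_\zeta)$, so no additional hypothesis on the geometry of $D$ is needed.
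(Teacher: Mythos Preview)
Your proposal is correct and matches the paper's approach exactly: the paper does not give a separate proof of Lemma \ref{general} but simply remarks that it follows ``exactly the footsteps of the previous section,'' and you have written out those footsteps (the integral bound from Theorem \ref{forward} for one direction and the argument of Lemma \ref{generator-lemma} for the backward case). The only addition you supply beyond what is explicit in Section 4 is the converse for forward orbits (Lipschitz $\Rightarrow$ bounded derivative via differentiability of $\gamma_\zeta^D$), which is immediate and in the same spirit.
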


Even though Lemma \ref{general} gives us a characterization of the Lipschitz condition for general simply connected domains, we would like to have a better understanding of the situation. In particular, it would be useful to know if every forward orbit is Lipschitz as in semigroups of the unit disk, or if we can state a better result for special classes of backward orbits.

Using the conjugation formula (\ref{s1}) of $(\phi_t^D)$ and the chain rule, we deduce $G^D(\phi_t^D(\zeta))=f'(\phi_t(z))G(\phi_t(z))$, for all $t\ge0$ and all $\zeta\in D$, $z\in\D$ with $\zeta=f(z)$. In this way, we may write $G^D$ as the product of two holomorphic functions of $\D$. So, with a multiple use of Lemma \ref{pommerenkelemma} and denoting the Koenigs function of $(\phi_t)$ by $h$ and its Koenigs domain by $\Omega$, we have
\begin{eqnarray}\label{upperbound}
	|G^D(\phi_t^D(\zeta))|&=&|f'(\phi_t(z))|\cdot|G(\phi_t(z))|
\le\frac{4\delta_D(f(\phi_t(z)))}{1-|\phi_t(z)|^2}\cdot\frac{1-|\phi_t(z)|^2}{\delta_\Omega(h(\phi_t(z)))} \nonumber \\  
	&\le&4\frac{\delta_D(f(\phi_t(z)))}{\delta_\Omega(h(z))}.
\end{eqnarray}

However, the asymptotic behavior of the orbits of $(\phi_t^D)$ requires further attention. Let $\tau$ be the Denjoy-Wolff point of $(\phi_t)$. Clearly, if $\tau\in\D$, then $(\phi_t^D)$ has $f(\tau)\in D$ as its own Denjoy-Wolff point and remains elliptic. On the other hand, if $\tau\in\partial\D$, then Carath\'{e}odory's Theorem implies that $\tau$ corresponds through $f$ to a prime end $\xi$ of $D$. Therefore, all the orbits of $(\phi_t^D)$ escape to the boundary of $D$ and $(\phi_t^D)$ is still non-elliptic. Nevertheless, the impression of the prime end $\xi$ might be greater than a singleton; see \cite[Chapter 4]{book} for details on prime ends and their impressions. This does not allow us to formally speak of a Denjoy-Wolff point for $(\phi_t^D)$ since in certain cases its orbits might cluster on a continuum. For this reason, we may say that $\xi$ is the \textit{Denjoy-Wolff prime end} of $(\phi_t^D)$ and all its orbits converge to $\xi$ in the Carath\'{e}odory topology of $D$. For a recent discussion on conditions which certify the existence of an actual Denjoy-Wolff point (and not just a Denjoy-Wolff prime end) in arbitrary simply connected domains, we refer to \cite{Benini}. 

Keeping the note above in mind, we immediately deduce the following result:

\begin{prop}
	Let $D\subsetneq\mathbb{C}$ be a simply connected domain and $(\phi_t^D)$ be a semigroup in $D$. 
	\begin{enumerate}
		\item[\textup{(a)}] If $(\phi_t^D)$ is elliptic, then every forward orbit is Lipschitz.
		\item[\textup{(b)}] If $(\phi_t^D)$ is non-elliptic and the impression of the Denjoy-Wolff prime end of $(\phi_t^D)$ does not contain $\infty$, then every forward orbit is Lipschitz.
	\end{enumerate}
\end{prop}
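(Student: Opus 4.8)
The plan is to reduce everything to a distance estimate via Lemma \ref{general} and inequality (\ref{upperbound}), and then to control $\delta_D$ along a forward orbit using the asymptotics recorded just before the statement. Write $\zeta=f(z)$ with $z\in\D$, so that $\gamma_\zeta^D(t)=f(\phi_t(z))$. By Lemma \ref{general}, $\gamma_\zeta^D$ is Lipschitz as soon as $\limsup_{t\to+\infty}|G^D(\gamma_\zeta^D(t))|<+\infty$, and by (\ref{upperbound}) we have $|G^D(\gamma_\zeta^D(t))|\le 4\,\delta_D(\gamma_\zeta^D(t))/\delta_\Omega(h(z))$. Since $\delta_\Omega(h(z))$ is a fixed positive constant, the whole proposition comes down to showing that the image of the forward orbit stays at bounded Euclidean distance from $\partial D$.

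For part (a), if $(\phi_t^D)$ is elliptic then so is $(\phi_t)$ (ellipticity is preserved by the conjugation (\ref{s1}), the Denjoy-Wolff point of $(\phi_t^D)$ being $f(\tau)\in D$). In every elliptic case the forward orbits of $(\phi_t)$ are relatively compact in $\D$: either $(\phi_t)$ is a group of hyperbolic rotations and each orbit is a circle centered at $\tau$, or $\phi_t(z)\to\tau\in\D$ by the continuous Denjoy-Wolff theorem and the closure of the orbit is the orbit together with $\tau$. In either situation $\overline{\gamma_\zeta^D([0,+\infty))}=f\bigl(\overline{\gamma_z([0,+\infty))}\bigr)$ is a compact subset of $D$; as $G^D$ is holomorphic, hence continuous, on $D$, it is bounded on this compact set, and Lemma \ref{general} finishes the argument. (Alternatively, one bounds $\delta_D$ on this compact set directly and invokes (\ref{upperbound}).)

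For part (b), the genuinely delicate case, $(\phi_t)$ is non-elliptic, $\phi_t(z)\to\tau\in\partial\D$, and $\tau$ corresponds under $f$ to the Denjoy-Wolff prime end $\xi$ of $(\phi_t^D)$. The key point is that the cluster set of $\gamma_\zeta^D(t)=f(\phi_t(z))$ in $\overline{\C}$, as $t\to+\infty$, is contained in the impression $I(\xi)$: for each $r>0$ the tail of $\{\phi_t(z)\}$ lies in $D(\tau,r)\cap\D$, hence the tail of the orbit lies in $f\bigl(D(\tau,r)\cap\D\bigr)$, so every subsequential limit of $\gamma_\zeta^D(t)$ lies in $\bigcap_{r>0}\overline{f\bigl(D(\tau,r)\cap\D\bigr)}=I(\xi)$ (see \cite[Chapter 4]{book}). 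Now suppose, towards a contradiction, that $\delta_D(\gamma_\zeta^D(t_n))\to+\infty$ along some $t_n\to+\infty$. Fixing any $a\in\C\setminus D$ (which exists since $D\subsetneq\C$), we get $|\gamma_\zeta^D(t_n)-a|\ge\delta_D(\gamma_\zeta^D(t_n))\to+\infty$, so $\gamma_\zeta^D(t_n)\to\infty$ in $\overline{\C}$ and therefore $\infty\in I(\xi)$, contradicting the hypothesis. Hence $\delta_D(\gamma_\zeta^D(t))$ is bounded in $t$, and by (\ref{upperbound}) and Lemma \ref{general} the orbit $\gamma_\zeta^D$ is Lipschitz.

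The main obstacle is exactly the prime-end step in (b): making rigorous that the cluster set of the orbit in the sphere lies inside the impression of the Denjoy-Wolff prime end. This is where the Carathéodory prime-end machinery is needed, and where one must be careful that ``convergence to $\xi$'' in the Carathéodory topology (which follows from $\phi_t(z)\to\tau$) really does confine the Euclidean cluster set to $I(\xi)$. Everything else — the reduction through Lemma \ref{general} and (\ref{upperbound}), and the compactness argument of (a) — is routine.
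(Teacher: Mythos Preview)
Your proof is correct and follows essentially the same route as the paper: reduce via Lemma \ref{general} to bounding $|G^D|$ along the orbit, then use (\ref{upperbound}) to transfer this to a bound on $\delta_D(\gamma_\zeta^D(t))$. The only differences are cosmetic: in (a) you argue via compactness of the orbit closure in $D$ (and explicitly treat rotation groups), whereas the paper simply takes the limit in (\ref{upperbound}) to get $4\delta_D(\zeta_0)/\delta_\Omega(h(z))$; in (b) you spell out the prime-end/cluster-set inclusion that the paper summarizes in a single clause (``$\gamma_\zeta^D$ clusters on a bounded set of points of $\partial D$'').
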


\begin{proof}
	(a) Fix $\zeta\in D$. Since $(\phi_t^D)$ is elliptic, it has a Denjoy-Wolff point $\zeta_0\in D$ and $\lim_{t\to+\infty}\phi_t^D(\zeta)=\zeta_0$. By \eqref{upperbound} we obtain
	$$\limsup_{t\to+\infty}|G^D(\phi_t^D(\zeta))|\le4\dfrac{\delta_D(\zeta_0)}{\delta_\Omega(h(z))}<+\infty.$$
	Hence Lemma \ref{general} yields that $\gamma_\zeta^D$ is Lipschitz.
	
	(b) Fix $\zeta\in D$. This time, $(\phi_t^D)$ has a Denjoy-Wolff prime end $\xi$ with bounded impression. Arguing as in the previous case, from relation \eqref{upperbound} we infer that $\limsup_{t\to+\infty}|G^D(\phi_t^D(\zeta))|=0$ since $\gamma_\zeta^D$ clusters on a bounded set of points of $\partial D$. As a result, $\gamma_\zeta^D$ is Lipschitz by Lemma \ref{general}.
\end{proof}

\begin{rmk}
But what about non-elliptic semigroups whose Denjoy-Wolff prime ends have unbounded impression? Working as above, we may see that
\begin{equation}\label{lowerbound}
|G^D(\phi_t^D(\zeta))|\ge\frac{1}{4}\cdot\frac{\delta_D(f(\phi_t(z)))}{\delta_\Omega(h(\phi_t(z)))}.
\end{equation}
Thus we may extract the following:
\begin{enumerate}
	\item[(i)] If the distance $\delta_D(f(\phi_t(z)))$ remains bounded, then by (\ref{upperbound}), every forward orbit is still Lipschitz. For instance, this happens if $D$ is a strip or if $D$ is a half-plane and $(\phi_t^D)$ is parabolic of finite shift.
	\item[(ii)] Suppose now that for some  $z\in\D$, the quantity $\delta_D(f(\phi_t(z)))$ diverges, but $\delta_\Omega(h(\phi_t(z)))$ remains bounded. Then (\ref{lowerbound}) dictates that $\gamma_\zeta^D$ is not Lipschitz. For example, this happens when $D$ is a half-plane and $(\phi_t^D)$ is hyperbolic or parabolic of positive hyperbolic step and infinite shift.
\end{enumerate}
Therefore, both Lipschitz and non-Lipschitz forward orbits may arise in semigroups of general simply connected domains.

\end{rmk}

Similar results can be deduced when distinguishing cases with regard to the regularity of a backward orbit of $(\phi_t^D)$ (recall that regularity is defined through the hyperbolic distance which is conformally invariant) and to the point where it lands.

\section*{Acknowledgements}

We thank V. Vellis for the interesting conversation about Lipschitz curves and the Hayman-Wu Theorem. We also thank M. D. Contreras and L. Rodr\'{i}guez-Piazza for the helpful discussion on the properties of the orbits of holomorphic semigroups.

\end{document}